\newtheorem{theorem}{Theorem}[section]
\newtheorem{proposition}[theorem]{Proposition}
\newtheorem{lemma}[theorem]{Lemma}
\newtheorem{corollary}[theorem]{Corollary}
\newtheorem{remark}[theorem]{Remark}
\newtheorem{question}[theorem]{Question}
\newtheorem{definition}[theorem]{Definition}
\def\NAT@def@citea{\def\@citea{\NAT@separator}}
\begin{document}
\vspace*{10mm}

\noindent
{\Large \bf On determinants of tournaments and $\mathcal{D}_k$}

\vspace*{7mm}

\noindent
{\large \bf Jing Zeng, Lihua You$^{*}$}
\noindent

\vspace{7mm}

\noindent
School of Mathematical Sciences, South China Normal University,  Guangzhou, 510631, P. R. China,
e-mail: {\tt 2022021944@m.scnu.edu.cn},\quad{\tt 20041023@m.scnu.edu.cn}. \\[2mm]
$^*$ Corresponding author
\noindent

\vspace{7mm}

\noindent
{\bf Abstract} \
\noindent
Let $T$ be a tournament with $n$ vertices $v_1,\ldots,v_n$. The skew-adjacency matrix of $T$ is the $n\times n$ zero-diagonal matrix $S_T = [s_{ij}]$ in which $s_{ij}=-s_{ji}=1$ if $ v_i $ dominates $ v_j $. We define the determinant $\det(T)$ of $ T $ as the determinant of $ S_T $. It is well-known that $\det(T)=0$ if $n$ is odd and $\det(T)$ is the square of an odd integer if $n$ is even. Let $\mathcal{D}_k$ be the set of tournaments whose all subtournaments have determinant at most $ k^{2} $, where $k$ is a positive odd integer. The necessary and sufficient condition for $T\in \mathcal{D}_1$ or $T\in \mathcal{D}_3$ has been characterized in $2023$. In this paper, we characterize the set $\mathcal{D}_5$, obtain some properties of $\mathcal{D}_k$. Moreover, for any positive odd integer $k$, we give a construction of a tournament $T$ satisfying that $\det(T)=k^2$, and $T\in \mathcal{D}_k\backslash\mathcal{D}_{k-2}$ if $k\geq 3$, which implies $\mathcal{D}_k\backslash\mathcal{D}_{k-2}$ is not an empty set for $k\geq 3$.
 \\[2mm]

\noindent
{\bf Keywords:} \ Tournament; subtournament; skew-adjacency matrix; determinant; transitive blowup

\noindent
{\bf AMS:} \ 05C20, 05C50

\baselineskip=0.30in

\section{Introduction}

\hspace{1.5em} A \textit{tournament} is a directed graph with exactly one arc between each pair of vertices. If the arc between two vertices $ u $ and $ v $ is directed  from $ u $ to $ v $, we say that $ u $ dominates $ v $, and write $u\rightarrow v$. Throughout this paper, we mean by an \textit{$n$-tournament}, a tournament with $ n $ vertices. Let $T$ be an $n$-tournament with vertex set $\{v_1,\ldots,v_n\}$. The \textit{skew-adjacency matrix} of $T$ is the $n\times n$ zero-diagonal matrix $S_T = [s_{ij}]$ in which $s_{ij}=-s_{ji}=1$ if $ v_i $ dominates $ v_j $. We define the \textit{determinant} $\det(T)$ of $T$ as the determinant of $S_T$. It is well-known that the determinant of a skew-symmetric matrix with even order is the square of its Pfaffian by Cayley\cite{PFAFFIAN}. By using Pfaffian polynomial, Fisher and Ryan\cite{DET} proved that $\det(T)=0$ if $n$ is odd and $\det(T)$ is the square of an odd integer if $n$ is even. 

Let $ T $ be an $n$-tournament. We denote by $V(T)$ the vertex set of $T$. For $ X \subseteq V(T) $, we denote by $ T[X] $ the subtournament of $ T $ induced by $ X $. To simplify, for $\{v_1,\ldots,v_k\}\subseteq V(T)$, we denote by $ T[v_1,v_2,\ldots,v_k]$ the subtournament $T[\{v_1,v_2,\ldots,v_k\}]$ of $T$. We say that $T$ contains a tournament $H$ if $H$ is isomorphic to a subtournament of $T$. Let $X$ and $Y$ be subsets of $V(T)$ such that $X\cap Y=\emptyset$. If $v\rightarrow u$ for any $v\in X$ and any $u\in Y$, we write $X\rightarrow Y$, and if $X=\{v\}$, we write $v\rightarrow Y$. 

The \textit{join} of a tournament $T_1$ to a tournament $T_2$, denoted by $T_1\rightarrow T_2$, is the tournament obtained from $T_1$ and $T_2$ by adding an arc from each vertex of $T_1$ to all vertices of $T_2$. It is easy to see that $V(T_1)\rightarrow V(T_2)$ in $T_1\rightarrow T_2$. Let $K_1$ be a $1$-tournament with vertex $u$. The join of a tournament $T$ to $K_1$ is denoted by $T\rightarrow u$, $T^{+}$\cite{UNI} for short, and the join of $K_1$ to a tournament $T$ is denoted by $u \rightarrow T$, $T^{-}$ for short. 

A tournament $ T $ is \textit{transitive} if there is no 3-cycle in $ T $, or equivalently, if it is possible to order its vertices into $v_1,\ldots,v_n$ such that $v_i\rightarrow v_j$ if and only if $i<j$. The determinant of a transitive tournament is equal to $0$ or $1$\cite{DETTRANSI}.

\begin{definition}\label{DefBLWOUP}
Let $R$ be an $n$-tournament with vertices $v_1,\cdots,v_n$, $T_1,\cdots,T_n$ be tournaments. A tournament $R(T_1,\cdots,T_n)$ is obtained by replacing each vertex $v_i$ with the tournament $T_i$ for each $1\leq i\leq n$, and adding arcs between $V(T_i)$ and $V(T_j)$ such that $V(T_i)\rightarrow V(T_j)$ if $v_i\rightarrow v_j$ for $1\leq i,j \leq n$, we call such $R(T_1,\ldots,T_n)$ is a blowup of $R$ by $T_1,\ldots,T_n$.	
\end{definition}

 Follow the notation in Definition \ref{DefBLWOUP}, if $T_i$ is transitive for each $1\leq i\leq n$ and $a_i$ = $|V(T_i)|$, we call $R(T_1,\cdots,T_n)$ the $transitive$ $(a_1,\cdots,a_n)$-$blowup$ of $R$ (transitive blowup of $R$ for short), denoted by $R(a_1,\cdots,a_n)$\cite{BLOWUP}.

The \textit{switch} of a tournament $ T $, with respect to a subset $ W $ of $ V $, is the tournament obtained by reversing all the arcs between $ W $ and $V\backslash W$ (If $W=\emptyset$ or $W=V$, then the switch of $T$ is $T$ itself). If $T'$ is a switch of $T$, we say $T'$ and $T$ are switching equivalent. It is well-known that two tournaments $T_1$ and $T_2$ with the same vertex set are switching equivalent if and only if their skew-adjacency matrices are $\{\pm1\}$-diagonally similar\cite{TWOGRAPH}. Hence, the skew-adjacency matrices of $T_1$ and $T_2$ have the same principal minors. Then we have the following result.

\begin{proposition}\label{Minors}
Let $T_1$ and $T_2$ are two tournaments with the same vertex set $V$ such that $T_1$ and $T_2$ are switching equivalent. Then $T_1[U]$ is switching equivalent to $T_2[U]$, and $\det(T_1[U])=\det(T_2[U])$ for any non-empty subset $U\subseteq V$.
\end{proposition}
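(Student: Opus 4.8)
The plan is to reduce everything to the matrix picture already set up in the excerpt. Since $T_1$ and $T_2$ are switching equivalent, there is a subset $W\subseteq V$ with $T_2$ equal to the switch of $T_1$ with respect to $W$. Encoding $W$ by the diagonal matrix $D=\mathrm{diag}(d_1,\ldots,d_n)$ with $d_i=-1$ when $v_i\in W$ and $d_i=+1$ otherwise, the switch operation is exactly $S_{T_2}=DS_{T_1}D$: the $(i,j)$ entry $d_is_{ij}d_j$ is unchanged when $v_i,v_j$ lie on the same side of $W$ (so $d_id_j=1$) and is negated, i.e. the arc is reversed, precisely when exactly one of $v_i,v_j$ lies in $W$ (so $d_id_j=-1$). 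This is the $\{\pm1\}$-diagonal similarity from \cite{TWOGRAPH}.

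For the first assertion, I would restrict attention to the arcs lying inside $U$. An arc between two vertices $u_1,u_2\in U$ is reversed in passing from $T_1$ to $T_2$ if and only if exactly one of $u_1,u_2$ lies in $W$, which (since $u_1,u_2\in U$) holds if and only if exactly one of them lies in $W\cap U$. Hence $T_2[U]$ is obtained from $T_1[U]$ by switching with respect to $W\cap U$, so $T_1[U]$ and $T_2[U]$ are switching equivalent. At the matrix level this is the statement that the principal submatrix of $S_{T_2}=DS_{T_1}D$ indexed by $U$ equals $D_U\,S_{T_1}[U]\,D_U$, where $D_U$ is the principal submatrix of $D$ indexed by $U$; since $D_U$ is again a $\{\pm1\}$-diagonal matrix, this is precisely diagonal similarity of $S_{T_1[U]}$ and $S_{T_2[U]}$.

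The determinant equality then follows at once. Writing $S_{T_i[U]}=S_{T_i}[U]$, we have $S_{T_2[U]}=D_US_{T_1[U]}D_U$, so
\[
\det(T_2[U])=\det\!\big(D_US_{T_1[U]}D_U\big)=\det(D_U)^2\det\!\big(S_{T_1[U]}\big)=\det(T_1[U]),
\]
because $\det(D_U)=\pm1$. Equivalently, one may invoke the remark in the excerpt that switching equivalent tournaments have skew-adjacency matrices with equal principal minors: the principal minor of $S_{T_1}$ indexed by $U$ is exactly $\det(S_{T_1[U]})=\det(T_1[U])$, and likewise for $T_2$, so the two determinants coincide.

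There is no serious obstacle here; the only point requiring care is the bookkeeping in the first assertion, namely verifying that the restriction of a switch to an induced subtournament is again a switch (with respect to $W\cap U$), and correspondingly that the principal submatrix of a diagonally similar pair is diagonally similar via the restricted diagonal matrix. Once this compatibility is established, both conclusions are immediate consequences of the diagonal-similarity characterization of \cite{TWOGRAPH}.
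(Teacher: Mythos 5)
Your proof is correct and follows essentially the same route as the paper, which derives the proposition directly from the $\{\pm1\}$-diagonal-similarity characterization of switching equivalence and the resulting equality of principal minors; you have simply written out in full the restriction-to-$U$ bookkeeping ($S_{T_2}[U]=D_U S_{T_1}[U] D_U$ and $W\cap U$) that the paper leaves implicit.
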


For every positive odd integer $k$, let $\mathcal{D}_k$ be the set of tournaments whose all subtournaments have determinant at most $ k^{2} $, or equivalently, the principal minors of their skew-adjacency matrices do not exceed $k^2$\cite{DTHREE}.

By Proposition \ref{Minors}, if two tournaments $T_1$ and $T_2$ are switching equivalent, then $T_1\in \mathcal{D}_k$ if and only if $T_2\in \mathcal{D}_k$. Thus $\mathcal{D}_k$ is closed under the switching operation.

A \textit{diamond} is a $4$-tournament consisting of a vertex dominating or dominated by a 3-cycle. In this paper, we use $D$ to denote a diamond. The determinant of a diamond is $9$, and the determinant of a $4$-tournament is 9 if it is a diamond and 1 otherwise\cite{DIACHA}. A tournament contains no diamonds if and only if it is switching equivalent to a transitive tournament\cite{TRANSI}. Note that the subtournaments of a transitive tournament are also transitive tournaments, and the determinant of a transitive tournament is $1$ or $0$, then we have the following Proposition \ref{Done}.

\begin{proposition}\label{Done}
	Let $T$ be a tournament. Then the following assertions are equivalent:
	\item{\rm(i)} $T\in \mathcal{D}_1$.
	\item{\rm(ii)} $T$ is switching equivalent to a transitive tournament.
	\item{\rm(iii)} $T$ contains no diamonds. 
\end{proposition}

The authors \cite{DTHREE} characterized $\mathcal{D}_3$ as follows.

\begin{theorem}{\rm(\!\!\text{ Boussaïri et al.}\cite{DTHREE})}\label{Dthree}
	Let $T$ be a tournament. Then the following assertions are equivalent:
	\item{\rm(i)} $T\in \mathcal{D}_3$.
	\item{\rm(ii)} $T$ is switching equivalent to a transitive tournament or a transitive blowup of a diamond.
	\item{\rm(iii)} All the $6$-subtournaments of $T$ are in $\mathcal{D}_3$.
\end{theorem}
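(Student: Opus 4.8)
The plan is to pin down the determinant behaviour of transitive blowups first, deduce the two easy implications from it, and then treat the reconstruction of the blowup structure as the core of the argument. The cornerstone is the reduction, for any base tournament $R$ on vertices $v_1,\ldots,v_n$,
\[
\det\bigl(R(a_1,\ldots,a_n)\bigr)=\det\bigl(R[\{v_i : a_i\text{ is odd}\}]\bigr),
\]
so that blowing a vertex up by an even transitive block deletes it from the determinant while an odd block keeps it as a single vertex. I would prove this by a row/column reduction inside each transitive block: ordering a block as $u_1\rightarrow\cdots\rightarrow u_a$, the vector $\mathrm{row}(u_i)-\mathrm{row}(u_{i+1})$ of $S_T$ is supported only on the two columns $u_i,u_{i+1}$, each with entry $+1$, because all inter-block entries are constant on a block. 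Pairing rows and columns this way collapses each even block to nothing and each odd block to one vertex, and since $\det$ is a perfect square the sign bookkeeping is irrelevant for membership in $\mathcal{D}_k$.

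Granting this lemma, $\mathrm{(ii)}\Rightarrow\mathrm{(i)}$ is short. A subtournament $W$ of a transitive blowup $D(a_1,a_2,a_3,a_4)$ of a diamond is itself a transitive blowup $D'(b_1,\ldots)$ with $D'$ an induced subtournament of $D$ and $b_i\le a_i$; by the lemma $\det(W)$ equals the determinant of an induced subtournament of the diamond $D$, which lies in $\{0,1,9\}$ and so is at most $9$. Hence every subtournament has determinant at most $9$, i.e.\ $T\in\mathcal{D}_3$; the transitive case is covered by $\mathcal{D}_1\subseteq\mathcal{D}_3$ via Proposition \ref{Done}, and the passage to a switched representative is justified by Proposition \ref{Minors}. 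The implication $\mathrm{(i)}\Rightarrow\mathrm{(iii)}$ is immediate, since $\mathcal{D}_3$ is closed under taking subtournaments. I would also record here the parity observation that drives the statement: $\det$ vanishes in odd order, so the smallest subtournament with determinant $\ge 25$ has even order at least $6$ — this is exactly why the test in $\mathrm{(iii)}$ lives on $6$-subtournaments, and why a single outside vertex (giving only $5$ vertices with a diamond) carries no determinantal constraint.

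The heart of the proof is $\mathrm{(iii)}\Rightarrow\mathrm{(ii)}$. If $T$ contains no diamond it is switching equivalent to a transitive tournament by Proposition \ref{Done}, so assume $T$ contains a diamond $D$. I would fix $D$, switch it into a canonical form by Proposition \ref{Minors}, and analyse how each further vertex may attach to $D$. The key step is to show that the $\mathcal{D}_3$-condition on the $6$-subtournaments $D\cup\{u,v\}$ forces every outside vertex to relate to $D$ exactly as one of the four diamond-vertices does, i.e.\ to fall into one of four twin classes. The natural framework is modular decomposition: I would prove that these classes are modules of $T$, that the quotient of $T$ by them is a diamond, and that each module is itself in $\mathcal{D}_3$ and diamond-free, hence switching equivalent to a transitive tournament by Proposition \ref{Done} — which yields a transitive blowup of a diamond.

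The main obstacle is precisely this reconstruction: passing from the local data (every $6$-subtournament is switching-transitive or a transitive blowup of a diamond) to a single global blowup over one diamond. The difficulties are that the witnessing diamond could a priori vary from one $6$-set to another, that the relation ``attaches to $D$ like vertex $x_i$'' must be shown to be consistent and transitive across all of $V(T)$, and that everything must be tracked only up to switching equivalence. I expect to resolve it by fixing one reference diamond, normalising by switching, and ruling out each inconsistent attachment pattern by exhibiting an explicit $6$-subtournament of determinant at least $25$; this is a finite case analysis over the ways two extra vertices can sit on a diamond, and the blowup-determinant lemma makes each of these determinant computations mechanical.
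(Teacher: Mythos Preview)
The paper does not supply a proof of Theorem~\ref{Dthree}: it is quoted verbatim as a result of Boussa\"iri et al.\ \cite{DTHREE}, so there is no ``paper's own proof'' to compare against. What the present paper \emph{does} prove is the $\mathcal{D}_5$ analogue, Theorem~\ref{D5D3character}, and your outline is essentially the same architecture transported down one level: fix a canonical copy of the base object ($L_6$ there, a diamond here), use the $6$-subtournament hypothesis to force every extra vertex to be a covertex/revertex of some base vertex (their Proposition~\ref{crforL6}, your ``twin classes''), check uniqueness and pairwise consistency of these attachments (their Corollary~\ref{cronlyone} and Proposition~\ref{mustcoL6}, your ``modules'' step), and finally invoke the $9\cdot\det(R)$ lemma (their Proposition~\ref{ninedet}/Remark~\ref{reninedet}) to conclude each class is transitive. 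Your determinant reduction for transitive blowups is exactly Proposition~\ref{blowup}, also cited from \cite{DTHREE}.

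One point to tighten: your sentence ``each module is itself in $\mathcal{D}_3$ and diamond-free'' is not justified by what precedes it. Being in $\mathcal{D}_3$ does not imply diamond-free; you need the extra argument that a $3$-cycle inside one class together with one vertex from each of the other three classes produces a $6$-subtournament of determinant $9\cdot 9=81$, contradicting (iii). That is precisely the content of Proposition~\ref{ninedet} applied with $R$ a diamond, and it is how the paper closes the corresponding step in the $\mathcal{D}_5$ proof. With that patch your plan is correct and matches the template the paper uses for $\mathcal{D}_5$.
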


By Proposition \ref{Done} and Theorem \ref{Dthree}, a tournament $T\in \mathcal{D}_3\backslash \mathcal{D}_1$ if and only if $T$ is switching equivalent to a transitive blowup of a diamond.

Clearly, $\mathcal{D}_k=(\mathcal{D}_k\backslash \mathcal{D}_{k-2}) \cup (\mathcal{D}_{k-2}\backslash \mathcal{D}_{k-4})\cup \cdots \cup (\mathcal{D}_3\backslash \mathcal{D}_1) \cup \mathcal{D}_1$ for odd $k$. If $\mathcal{D}_j\backslash \mathcal{D}_{j-2}\text{ }(j=3,5,\ldots,k)$ have been characterized, then $\mathcal{D}_k$ can be characterized.   

Based on the above results, a natural question is to characterize $\mathcal{D}_k$ for odd $k\geq 5$, and another question worth considering is whether there exists a tournament $T$ such that $\det(T)=k^2$ for any odd $k\in N$, furthermore, whether $\mathcal{D}_k\backslash \mathcal{D}_{k-2}$ is not an empty set.

In this paper, we characterize the set $\mathcal{D}_5$, explore the properties of $\mathcal{D}_k$. Moreover, for any positive odd $k$, we give a construction of a tournament $T$ satisfying that $\det(T)=k^2$ and $T\in \mathcal{D}_k\backslash\mathcal{D}_{k-2}$ if $k\geq 3$. By using this construction, we prove that there exists a tournament whose determinant is $k^2$ for any odd $k$, which solve a question in \cite{UNI}, and $\mathcal{D}_k\backslash \mathcal{D}_{k-2}$ is not an empty set for odd $k\geq 3$. 

\section{Preliminaries}\label{sec-pre}
\hspace{1.5em}In this section, we introduce the key tool in our study, further consider another invariant between tournament $T$ and its switches, and give a construction of tournaments. Moreover, we investigate the relation between determinants and the number of diamonds in $6$-tournaments, and obtain some necessary and sufficient conditions for determining whether the determinant of a $6$-tournament is 25.

Theorem \ref{Djoin} \cite{UNI} is a key tool in our study. We give another proof of (ii) below.

\begin{theorem}{\rm(\!\!\text{ Belkouche et al.}\cite{UNI})}\label{Djoin}
	Let $T_1$ and $T_2$ be tournaments with $p$ and $q$ vertices respectively.
	
	{\rm \item(i)} If $p$ and $q$ are even, then $\det(T_1 \rightarrow T_2)=\det(T_1) \cdot \det(T_2)$.
	
	{\rm \item(ii)} If $p$ and $q$ are odd, then $\det(T_1 \rightarrow T_2)=\det(T_1^{+}) \cdot \det(T_2^{+})$.
\end{theorem}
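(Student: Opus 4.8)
The plan is to prove (ii) by a purely linear-algebraic argument built from two Schur complements, introducing two auxiliary coordinates so that the odd-order degeneracy $\det(T_1)=\det(T_2)=0$ becomes the engine of the proof rather than an obstruction. Write the skew-adjacency matrix of the join in block form: since every vertex of $T_1$ dominates every vertex of $T_2$,
\[
S_{T_1\rightarrow T_2}=\begin{pmatrix} A & J\\ -J^{\top} & B\end{pmatrix},
\]
where $A=S_{T_1}$ is $p\times p$, $B=S_{T_2}$ is $q\times q$, and $J=\mathbf 1_p\mathbf 1_q^{\top}$ is the all-ones matrix. The naive routes fail: one cannot take a Schur complement with respect to $A$ (or treat the off-diagonal blocks as a rank-two update of $\mathrm{diag}(A,B)$) because $A$ and $B$ are skew-symmetric of odd order and hence singular, so $A^{-1}$ and $(\mathrm{diag}(A,B))^{-1}$ do not exist.

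To get around this I would embed $S_{T_1\rightarrow T_2}$ into the larger skew-symmetric matrix obtained by adding two coordinates $u_1,u_2$,
\[
\widetilde S=\begin{pmatrix} A & 0 & \mathbf 1_p & 0\\ 0 & B & 0 & \mathbf 1_q\\ -\mathbf 1_p^{\top} & 0 & 0 & -1\\ 0 & -\mathbf 1_q^{\top} & 1 & 0\end{pmatrix}
\]
of order $p+q+2$ (this is not itself a tournament matrix, which is irrelevant for the determinant). Viewing $\widetilde S=\bigl(\begin{smallmatrix} P & Q\\ -Q^{\top} & R\end{smallmatrix}\bigr)$ with $P=\mathrm{diag}(A,B)$ and $R=\bigl(\begin{smallmatrix}0&-1\\1&0\end{smallmatrix}\bigr)$ the $(u_1,u_2)$-block (invertible, $\det R=1$), the Schur-complement formula gives $\det\widetilde S=\det R\cdot\det(P+QR^{-1}Q^{\top})$, and a direct computation shows $P+QR^{-1}Q^{\top}=S_{T_1\rightarrow T_2}$; hence $\det\widetilde S=\det(T_1\rightarrow T_2)$. (Had one used $R=\bigl(\begin{smallmatrix}0&1\\-1&0\end{smallmatrix}\bigr)$ instead, the Schur complement would produce the matrix obtained from $S_{T_1\rightarrow T_2}$ by reversing every arc between $V(T_1)$ and $V(T_2)$, i.e.\ a switch with respect to $V(T_1)$, and Proposition~\ref{Minors} would then give the same determinant.)

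Next I would compute $\det\widetilde S$ a second way. Reordering the coordinates as $V(T_1),u_1,V(T_2),u_2$ (a simultaneous row/column permutation, which preserves the determinant) turns $\widetilde S$ into
\[
\begin{pmatrix} S_{T_1^{+}} & E\\ -E^{\top} & S_{T_2^{+}}\end{pmatrix},
\]
where $S_{T_1^{+}}=\bigl(\begin{smallmatrix} A & \mathbf 1_p\\ -\mathbf 1_p^{\top} & 0\end{smallmatrix}\bigr)$ and $S_{T_2^{+}}=\bigl(\begin{smallmatrix} B & \mathbf 1_q\\ -\mathbf 1_q^{\top} & 0\end{smallmatrix}\bigr)$ are the skew-adjacency matrices of $T_1^{+}$ and $T_2^{+}$, and $E$ is the rank-one coupling whose only nonzero entry sits in the $(u_1,u_2)$ position. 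The block $S_{T_1^{+}}$ is now invertible: $T_1^{+}$ has even order $p+1$, so by the Fisher--Ryan result quoted in the introduction $\det(T_1^{+})$ is the square of an odd integer, in particular nonzero. Taking the Schur complement with respect to $S_{T_1^{+}}$ yields
\[
\det\widetilde S=\det(T_1^{+})\cdot\det\!\bigl(S_{T_2^{+}}+E^{\top}(S_{T_1^{+}})^{-1}E\bigr).
\]
Since $E=\pm\,e\,f^{\top}$ with $e,f$ the last standard basis vectors of the two blocks, $E^{\top}(S_{T_1^{+}})^{-1}E$ equals the scalar $\bigl[(S_{T_1^{+}})^{-1}\bigr]_{u_1u_1}$ times a rank-one matrix, and by the cofactor/adjugate formula this scalar equals $\det(A)/\det(S_{T_1^{+}})=\det(T_1)/\det(T_1^{+})$, which is $0$ because $\det(T_1)=0$ ($T_1$ has odd order). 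Thus the correction term vanishes, $\det\widetilde S=\det(T_1^{+})\det(T_2^{+})$, and combining with the first computation gives $\det(T_1\rightarrow T_2)=\det(T_1^{+})\det(T_2^{+})$.

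The crux, and the only genuinely delicate point, is the second Schur complement: one must recognize that the very odd-order degeneracy which forbids the naive approach, namely $\det(T_1)=\det(T_2)=0$, is exactly what forces the coupling term $E^{\top}(S_{T_1^{+}})^{-1}E$ to vanish and decouples the two blocks. Everything else is bookkeeping: choosing the signs in $\widetilde S$ correctly, verifying the two Schur-complement identities, and invoking Fisher--Ryan for the invertibility of $S_{T_1^{+}}$.
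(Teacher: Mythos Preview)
Your proof is correct, and it takes a genuinely different route from the paper's. The paper proves (ii) combinatorially: it adjoins a two-vertex transitive tournament $T_3$ to $T_1\rightarrow T_2$, observes that $(T_1\rightarrow T_2)\rightarrow T_3$ is switching equivalent to $(u_2\rightarrow T_1)\rightarrow(T_2\rightarrow u_1)=T_1^{-}\rightarrow T_2^{+}$, and then applies part (i) twice together with $\det(T^{-})=\det(T^{+})$. Your argument is purely linear-algebraic: you adjoin two auxiliary coordinates to build $\widetilde S$, and compute $\det\widetilde S$ via two different Schur complements---first with respect to the $2\times 2$ block $R$ to recover $\det(T_1\rightarrow T_2)$, then with respect to $S_{T_1^{+}}$, where the rank-one coupling $E^{\top}(S_{T_1^{+}})^{-1}E$ vanishes because its scalar factor is $\det(T_1)/\det(T_1^{+})=0$. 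The paper's proof is shorter and leans on the tournament/switching structure plus (i); yours is self-contained (it does not invoke (i)) and isolates the algebraic mechanism---the $(u_1,u_1)$ entry of $(S_{T_1^{+}})^{-1}$ being a quotient with a skew-symmetric odd-order determinant in the numerator---that makes the two blocks decouple. It is worth noting that the same Schur-complement idea, applied directly to $S_{T_1\rightarrow T_2}$ when $p,q$ are even, gives (i) in one line (the correction $J^{\top}A^{-1}J$ vanishes since $\mathbf 1_p^{\top}A^{-1}\mathbf 1_p=0$ for skew-symmetric $A^{-1}$), so your approach in fact yields a uniform proof of both parts.
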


\begin{proof}[Proof of {\rm (ii)}]
	Since $T^{+}$ is switching equivalent to $T^{-}$, we have $\det(T^{-})$=$\det(T^{+})$. If $p$ and $q$ are odd, then we have $\det(T_1^{-}\rightarrow T_2^{+})=\det(T_1^{-})\cdot\det(T_2^{+})$ by (i). 
	
	Let $T_3$ be a tournament with two vertices $u_1,u_2$ such that $u_1\rightarrow u_2$. Then $(T_1\rightarrow T_2)\rightarrow T_3$ is switching equivalent to $(u_2\rightarrow T_1)\rightarrow (T_2\rightarrow u_1)$ with respect to the subset $W=\{u_2\}$ of $V=V(T_1)\cup V(T_2)\cup V(T_3)$. Since $p$ and $q$ are odd, then $p+q$ is even. By $|V(T_3)|=2$, $\det(T_3)=1$ and (i), we have	
	
	$$
	\begin{aligned}
	\det(T_1\rightarrow T_2)&=\det((T_1\rightarrow T_2)\rightarrow T_3)\\
	&=\det((u_2\rightarrow T_1)\rightarrow (T_2\rightarrow u_1))\\
	&=\det(u_2\rightarrow T_1)\cdot \det(T_2\rightarrow u_1)\\
	&=\det(T_1^{-})\cdot\det(T_2^{+})\\
	&=\det(T_1^{+})\cdot\det(T_2^{+}),\\
	\end{aligned}
	$$
	and we complete the proof.
\end{proof}

In \cite{DETTRANSI}, the authors showed that the determinant of a transitive tournament with even order is $1$. As an application of Theorem \ref{Djoin}, we give another proof of Proposition \ref{Dettransi}.

\begin{proposition}{\rm(\!\!\text{ Deng et al.}\cite{DETTRANSI})}\label{Dettransi}
	Let $n$ be even, $T$ be a transitive $n$-tournament. Then $\det(T)=1$. 
\end{proposition}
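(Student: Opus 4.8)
The plan is to prove that any transitive tournament $T$ on an even number $n$ of vertices has $\det(T)=1$ by induction on $n$, using Theorem \ref{Djoin}(i) as the engine.

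First I would observe that a transitive tournament has a natural ordering of its vertices $v_1,\ldots,v_n$ with $v_i\rightarrow v_j$ exactly when $i<j$. The crucial structural fact is that a transitive tournament decomposes as a join: splitting the vertex ordering into the first $p$ vertices and the last $q=n-p$ vertices, we have $T = T_1 \rightarrow T_2$ where $T_1 = T[v_1,\ldots,v_p]$ and $T_2 = T[v_{p+1},\ldots,v_n]$, and both $T_1,T_2$ are themselves transitive. This is because every vertex among the first $p$ dominates every vertex among the last $q$ in the transitive order.

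Next I would set up the induction. The base case is $n=2$: the transitive $2$-tournament is a single arc, whose skew-adjacency matrix is $\left(\begin{smallmatrix}0&1\\-1&0\end{smallmatrix}\right)$ with determinant $1$. For the inductive step with $n$ even and $n\geq 4$, I would split $T$ into $T_1\rightarrow T_2$ with $p=q$ both even (for instance $p=q=n/2$ when $n/2$ is even, or more simply $p=2$ and $q=n-2$, both even). By Theorem \ref{Djoin}(i), $\det(T)=\det(T_1)\cdot\det(T_2)$. Since $T_1$ and $T_2$ are transitive tournaments of smaller even order, the induction hypothesis gives $\det(T_1)=\det(T_2)=1$, hence $\det(T)=1$.

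I do not expect any serious obstacle here, since the work is carried entirely by Theorem \ref{Djoin}(i); the only point requiring a little care is verifying that the join decomposition of a transitive tournament really does split it into two transitive pieces of even order, so that the multiplicative formula applies cleanly. Choosing $p=2$, $q=n-2$ keeps both factors of even order and makes the recursion immediate, avoiding any parity bookkeeping. An equally clean alternative would be a direct evaluation: the skew-adjacency matrix of the transitively ordered $T$ is the upper-triangular-plus-sign matrix with $+1$ above the diagonal and $-1$ below, and one could compute its Pfaffian directly to get $\pm 1$; but the inductive route via the join is shorter and reuses the machinery already established in the excerpt.
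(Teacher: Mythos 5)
Your proposal is correct and follows essentially the same route as the paper: the paper also handles $n=2$ directly and then repeatedly applies Theorem \ref{Djoin}(i) after splitting off a transitive $2$-vertex piece (the paper peels off the last two vertices, $T=T[v_1,\ldots,v_{n-2}]\rightarrow T[v_{n-1},v_n]$, while you peel off the first two, which is the same argument phrased as an explicit induction). No substantive difference.
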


\begin{proof}
	If $n=2$, then $\det(T)=1$. If $n\geq 4$, since $T$ is transitive, there exists an ordering of vertices $v_1,\ldots,v_n$ of $T$ such that $v_i\rightarrow v_j$ for $i<j$. Then $T[v_1,\ldots,v_n]=T[v_1,\ldots,v_{n-2}]\rightarrow T[v_{n-1},v_{n}]$.
	
	By (i) of Theorem \ref{Djoin} and $\det(T[v_{n-1},v_n])=1$, we have $\det(T[v_1,\ldots,v_n])=\det(T[v_1,\\\ldots,v_{n-2}])\cdot \det(T[v_{n-1},v_{n}])= \det(T[v_1,\ldots,v_{n-2}])$.
	
	By repeatedly using (i) of Theorem \ref{Djoin}, we have $\det(T[v_1,\ldots,v_n])=\det(T[v_1,\ldots,v_{n-2}\\])=\cdots=\det(T[v_1,v_2])=1$.
\end{proof}

We denote by $\delta_T$ the number of diamonds of $T$ where $T$ is a tournament. The following lemma shows that the number of diamonds of a tournament $T$ is an invariant under switching operation.

\begin{lemma}\label{diamond}
	Let $T_1$ and $T_2$ be $n$-tournaments with the same vertex set $V$ such that $T_1$ and $T_2$ are switching equivalent, $X=\{v_1,v_2,v_3,v_4\}\subseteq V$.
	{\rm \item(i)} $T_1[X]$ is a diamond if and only if $T_2[X]$ is a diamond.
	{\rm \item(ii)} $\delta_{T_1}=\delta_{T_2}$. 
\end{lemma}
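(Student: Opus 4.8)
The plan is to reduce both parts to a single observation: among $4$-tournaments, being a diamond is detected exactly by the determinant, and the determinant of any induced subtournament is preserved by switching via Proposition \ref{Minors}. So the whole argument rides on the stated fact that the determinant of a $4$-tournament is $9$ if it is a diamond and $1$ otherwise.

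For part (i), I would first apply Proposition \ref{Minors} with $U = X$. Since $T_1$ and $T_2$ are switching equivalent, this yields $\det(T_1[X]) = \det(T_2[X])$. Because $|X| = 4$, the determinant characterization of diamonds gives the chain of equivalences
$$
T_1[X]\text{ is a diamond}\iff \det(T_1[X])=9\iff \det(T_2[X])=9\iff T_2[X]\text{ is a diamond},
$$
which is exactly (i). It is worth noting here that invoking the determinant criterion is what makes this immediate; a direct combinatorial tracking of which arcs get reversed under the switch with respect to $W$ would be far more tedious and is unnecessary.

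For part (ii), I would observe that since $T_1$ and $T_2$ share the vertex set $V$, they range over precisely the same family of $\binom{n}{4}$ four-element subsets $X \subseteq V$. By definition, $\delta_{T_i}$ counts exactly those $X$ for which $T_i[X]$ is a diamond. Part (i) shows that, subset by subset, $T_1[X]$ is a diamond if and only if $T_2[X]$ is, so the two counts agree term by term and hence $\delta_{T_1} = \delta_{T_2}$.

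I do not expect any real obstacle: the statement is essentially a corollary of Proposition \ref{Minors} combined with the determinant test for diamonds. The only point requiring a little care is to phrase part (ii) as a sum over a fixed family of four-subsets (identical for both tournaments because the vertex sets coincide), so that the subset-wise equivalence from (i) transfers cleanly to equality of the totals.
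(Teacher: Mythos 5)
Your proposal is correct and follows essentially the same route as the paper: part (i) via Proposition \ref{Minors} applied to $U=X$ together with the determinant test ($\det=9$ iff diamond) for $4$-tournaments, and part (ii) by transferring the subset-wise equivalence to the count over the common family of four-subsets (the paper phrases this as two inequalities $\delta_{T_j}\geq\delta_{T_i}$, which is the same idea). No gaps.
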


\begin{proof}
	If $T_i[X]$ is a diamond for $i\in \{1,2\}$, then $\det(T_i[X])=9$. By Proposition \ref{Minors}, we have $\det(T_j[X])=9$ for $j\in \{1,2\}\backslash\{i\}$, then $T_j[X]$ is also a diamond. Thus (i) holds.
	
	If there is a diamond with vertices $v_{i1},v_{i2},v_{i3},v_{i4}$ in $T_i$ for $i\in \{1,2\}$, then $T_j[v_{i1},v_{i2},v_{i3},\\v_{i4}]$ is a diamond in $T_j$ for $j\in \{1,2\}\backslash \{i\}$ by (i), and thus $\delta_{T_j}\geq \delta_{T_i}$. Combining the cases of $i=1$ and $i=2$, we have $\delta_{T_1}=\delta_{T_2}$, (ii) holds.
\end{proof}

The following lemma is crucial to explore the properties of $6$-tournaments.

\begin{lemma}\label{fzt}
  Let $ T $ be a $5$-tournament. Then $ \delta_T\in \{0,2\}$.
\end{lemma}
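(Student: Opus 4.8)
The plan is to write $d=\delta_T$ for the number of the five $4$-subsets $X\subseteq V(T)$ with $T[X]$ a diamond, and to pin down $d$ by two independent constraints. The starting point is a clean dictionary between diamonds and $3$-cycles at the level of $4$-tournaments: of the four isomorphism types, the transitive one has no $3$-cycle, each of the two diamonds has exactly one, and the remaining strongly connected type (scores $(2,2,1,1)$) has exactly two. Equivalently, a $4$-tournament is a diamond if and only if it contains exactly one $3$-cycle; this is immediate from the known determinant values ($9$ for a diamond, $1$ otherwise) together with a direct inspection of the four $4$-tournaments.

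First I would establish the parity $d\equiv 0\pmod 2$ by double counting the incidences $(C,X)$, where $C$ is a $3$-cycle of $T$ and $X$ is a $4$-subset with $V(C)\subseteq X$. Each $3$-cycle lies in exactly $5-3=2$ such $4$-subsets, so the number of incidences is $2c$, where $c$ is the total number of $3$-cycles of $T$. Grouping instead by $X$ and using the dictionary above, the same count equals $d+2s$, where $s$ is the number of strongly connected $4$-subsets. Hence $d=2(c-s)$ is even, and $d\in\{0,2,4\}$.

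The remaining, and most delicate, point is the upper bound $d\le 2$; this is where I expect the real work. My preferred route is spectral. Writing the characteristic polynomial of the real skew-symmetric matrix $S_T$ as $\det(\lambda I-S_T)=\lambda^5+E_2\lambda^3+E_4\lambda$ (the coefficients of $\lambda^4,\lambda^2,\lambda^0$ vanish since the odd-order principal minors of a skew-symmetric matrix, and its determinant, are zero), I note that $E_2$ is the sum of the ten $2\times 2$ principal minors, each equal to $1$, so $E_2=10$, while $E_4$ is the sum of the five $4\times 4$ principal minors $\det(T[X])\in\{1,9\}$, so $E_4=9d+(5-d)=8d+5$. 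Since the nonzero eigenvalues of $S_T$ are $\pm i\theta_1,\pm i\theta_2$ with $\theta_1,\theta_2$ real, we get $\theta_1^2+\theta_2^2=10$ and $\theta_1^2\theta_2^2=8d+5$, and AM--GM forces $8d+5=\theta_1^2\theta_2^2\le\bigl((\theta_1^2+\theta_2^2)/2\bigr)^2=25$, i.e.\ $d\le 2$. Combined with the parity above, $d\in\{0,2\}$, which is exactly $\delta_T\in\{0,2\}$.

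I expect the upper bound to be the crux: the parity step is elementary, but bounding $d$ seems to require either this spectral identity or, as a fallback, a direct structural argument ruling out $d=4$. The fallback is to assume exactly one $4$-subset is a non-diamond; the diamond conditions on the other four then force the locations of the $3$-cycles of $T$ so rigidly (a partition of the four ``common'' vertices by two triangles through the fifth vertex, or a prescribed triangle through that vertex) that an extra $3$-cycle is always created, contradicting the value of $c$ dictated by $d=4$. This works but is case-heavy, so I would make the spectral argument the main line and reserve the combinatorial forcing as a remark.
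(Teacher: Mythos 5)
Your proof is correct, but it takes a genuinely different route from the paper's. The paper argues structurally: starting from one diamond $T[v_1,v_2,v_3,v_4]$, it switches $T$ so that the fifth vertex $v_5$ dominates all of $v_1,\dots,v_4$ (using the switching-invariance of the diamond count from the preceding lemma), observes that the resulting tournament has exactly one $3$-cycle (no $3$-cycle can pass through $v_5$ or through the apex of the diamond), and reads off that exactly the two $4$-sets obtained by adjoining $v_5$ or the apex to that $3$-cycle are diamonds, giving $\delta_T=2$ directly. You instead split the claim into parity and an upper bound: the parity step double-counts incidences between $3$-cycles and $4$-subsets using the classification of $4$-tournaments by their number of $3$-cycles (which is correct: $0$ for transitive, $1$ for each diamond, $2$ for the strong type with scores $(1,1,2,2)$), and the bound $\delta_T\le 2$ comes from the coefficients $E_2=10$ and $E_4=8\delta_T+5$ of the characteristic polynomial of $S_T$ together with AM--GM on $\theta_1^2+\theta_2^2=10$, $\theta_1^2\theta_2^2=8\delta_T+5\le 25$. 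Both identities and the eigenvalue structure of a $5\times 5$ real skew-symmetric matrix are right, so the argument is complete. The paper's proof is shorter and entirely elementary, and its normalization ``switch so that $v_5$ dominates everything'' is reused later (e.g.\ in the proof of Proposition \ref{ninedet}); your spectral argument avoids all structural case analysis and has the advantage of generalizing --- the same $E_2,E_4$ bookkeeping on $n$-tournaments is essentially the mechanism behind upper bounds on diamond counts of the Gunderson--Semeraro type cited in the paper, whereas the paper derives its general bound (Lemma \ref{bounddia}) only by averaging the $5$-vertex case.
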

\begin{proof}
	Let $V(T)=\{v_1,v_2,v_3,v_4,v_5\}$. If $\delta_T > 0$, we assume that $ T[v_1,v_2,v_3,v_4] $ is a diamond such that $ v_2\rightarrow v_3\rightarrow v_4\rightarrow v_2 $, $ v_1\rightarrow\{v_2,v_3,v_4\} $ or $ v_1\leftarrow\{v_2,v_3,v_4\}. $
	
	Let $W=\{v_i\in V(T)\hspace{0.15cm}|\hspace{0.15cm} v_i\rightarrow v_5\}$, $ T' $ be a switch of $ T $ with respect to $W$. Then $T'$ satisfies that $v_5\rightarrow \{v_1,v_2,v_3,v_4\}$ in $ T' $, and $ T'[v_1,v_2,v_3,v_4] $ is also a diamond by Lemma \ref{diamond}. Assume that $ \{v'_1,v'_2,v'_3,v'_4\} = \{v_1,v_2,v_3,v_4\}$ such that $ v'_2\rightarrow v'_3\rightarrow v'_4\rightarrow v'_2 $, $ v'_1\rightarrow \{v'_2,v'_3,v'_4\}$ or $ v'_1\leftarrow \{v'_2,v'_3,v'_4\}$ in $ T' $, then there exists exactly one $3$-cycle $T'[v_2',v_3',v_4']$ in $T'$ since a $3$-cycle in $T'$ does not contain $v_5$ and $v_1'$. Consequently, there are exactly two diamonds $ T'[v'_1,v'_2,v'_3,v'_4] $ and $ T'[v_5,v'_2,v'_3,v'_4] $ in $ T' $. Hence there are exactly two diamonds in $ T $ by $ \delta_T = \delta_{T'}=2 $, and thus $\delta_T\in \{0,2\}$.
\end{proof}

There are many research on the number of diamonds of a tournament. For example, Bouchaala\cite{DIANINE} proved that $\delta_T$ is either $0$, $n-3$, $2n-8$ or at least $2n-6$ for an $n$-tournament $T(n\geq 9)$, Gunderson and Semeraro\cite{DIABOUND} proved that an $r$-uniform hypergraph in which every ($r+1)$-subset contains at most $2$ hyperedges has at most $\frac{n}{r^2} \tbinom{n}{r-1}$ hyperedges, in particular, an $n$-tournament contains at most $\frac{n}{16} \tbinom{n}{3}$ diamonds, Belkouche et al. \cite{DIACHA} characterized the $n$-tournaments with the maximum number of diamonds when $n\equiv 0\text{ }(\text{mod }4)$ and $n\equiv 3\text{ }(\text{mod }4)$ in the case of the existence of skew-conference matrices, and so on. By using Lemma \ref{fzt}, we obtain some bounds on $\delta_T$.

\begin{lemma}\label{bounddia}
	Let $n\geq 5$, $ T $ be an $n$-tournament. Then $ \delta_T=0$, or $ n-3\leq\delta_T\leq \frac{2}{5}\tbinom{n}{4}$.
\end{lemma}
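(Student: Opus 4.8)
The plan is to derive both bounds from Lemma \ref{fzt}, which forces every $5$-subtournament to contain either $0$ or exactly $2$ diamonds. The upper bound will follow from a double-counting argument, while the lower bound will come from a local propagation argument around a single diamond.

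For the upper bound $\delta_T \leq \frac{2}{5}\binom{n}{4}$, I would count in two ways the pairs $(S,D)$ in which $S$ is a $5$-element subset of $V(T)$ and $D$ is a diamond with $V(D)\subseteq S$. On one hand, since each $5$-subtournament contains at most $2$ diamonds by Lemma \ref{fzt}, the number of such pairs is at most $2\binom{n}{5}$. On the other hand, every diamond occupies a fixed $4$-subset and lies in exactly $n-4$ of the $5$-subsets, so the number of pairs equals $\delta_T(n-4)$. Hence $\delta_T(n-4)\leq 2\binom{n}{5}$. Using the identity $\binom{n}{5}=\frac{n-4}{5}\binom{n}{4}$ and cancelling the factor $n-4>0$ (valid since $n\geq 5$) yields $\delta_T\leq \frac{2}{5}\binom{n}{4}$. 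This bound holds unconditionally, including the degenerate case $\delta_T=0$.

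For the lower bound I would assume $\delta_T>0$ and fix one diamond $D$ on a vertex set $\{a,b,c,d\}$. For each of the $n-4$ vertices $v\in V(T)\setminus\{a,b,c,d\}$, the $5$-subtournament $T[\{a,b,c,d,v\}]$ contains $D$, so it has a positive number of diamonds and therefore exactly $2$ by Lemma \ref{fzt}. Thus besides $D$ there is exactly one further diamond $D_v$ in this $5$-subtournament, and since the only $4$-subset of $\{a,b,c,d,v\}$ other than $\{a,b,c,d\}$ must contain $v$, the diamond $D_v$ contains $v$. The key observation is that the $D_v$ are pairwise distinct and all distinct from $D$: for distinct $v,v'$ we have $v'\notin V(D_v)$ while $v'\in V(D_{v'})$, and $v\notin V(D)=\{a,b,c,d\}$. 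This produces $D$ together with $n-4$ additional distinct diamonds, giving $\delta_T\geq 1+(n-4)=n-3$.

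Both arguments are short once Lemma \ref{fzt} is available; the main point to verify carefully is the distinctness bookkeeping in the lower bound, namely that each auxiliary $5$-subset contributes a genuinely new diamond pinned to its outside vertex $v$, so that no double counting inflates the tally.
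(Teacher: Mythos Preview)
Your proposal is correct and follows essentially the same approach as the paper: the upper bound via double-counting pairs $(S,D)$ of a $5$-subset and a diamond inside it, and the lower bound by fixing one diamond and using Lemma~\ref{fzt} to produce a new diamond pinned to each outside vertex. Your distinctness bookkeeping for the $D_v$ is in fact spelled out a bit more carefully than in the paper.
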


\begin{proof}
	If $\delta_T\neq0$, let $V(T)=\{v_1,v_2,\ldots,v_n\}$ such that $T[v_1,v_2,v_3,v_4]$ is a diamond. For $5\leq i \leq n$, $T[v_1,v_2,v_3,v_4,v_i]$ contains two diamonds by Lemma \ref{fzt}, one is $T[v_1,v_2,v_3,v_4]$, and the other contains vertex $v_i$, denoted by $Q_i$. Since $Q_i$ contains vertex $v_i$ and $Q_j$ contains vertex $v_j\text{ }(i\neq j)$, $Q_i$ and $Q_j$ are not the same diamond in $T$. Then $T$ contains diamonds $T[v_1,v_2,v_3,v_4]$ and $Q_5,\ldots,Q_n$. Therefore, $\delta_T\geq 1+(n-4)=n-3$.
	
	There are $\tbinom{n}{5}$ $5$-subtournaments in $T$, and each $5$-subtournament in $T$ contains exactly $0$ or $2$ diamonds by Lemma \ref{fzt}, then these $5$-subtournaments in total contain at most $2\tbinom{n}{5}$ diamonds (including repeated diamonds). Moreover, each diamond of $T$ is contained exactly in $n-4$ $5$-subtournaments of $T$. Therefore, $(n-4)\cdot \delta_T\leq 2\tbinom{n}{5}$, and we have $\delta_T\leq \frac{2}{5}\tbinom{n}{4}$.
\end{proof}

By Lemma \ref{bounddia}, we have the following corollary immediately, and then we can study the properties of $6$-tournaments.

\begin{corollary}\label{rebounddia}
	Let $T$ be a $6$-tournament. Then $\delta_T=0$, or $3\leq \delta_T\leq 6$.
\end{corollary}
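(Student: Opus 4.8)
The plan is to apply Lemma \ref{bounddia} directly, specializing it to the value $n = 6$. Since $T$ is a $6$-tournament, we have $n = 6 \geq 5$, so the hypothesis of Lemma \ref{bounddia} is satisfied and its conclusion holds verbatim for $T$. In other words, this corollary is nothing more than the numerical instance of the lemma at $n = 6$, and no new combinatorial argument is needed.

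First I would evaluate the two bounds appearing in Lemma \ref{bounddia} at $n = 6$. The lower bound becomes $n - 3 = 6 - 3 = 3$. For the upper bound, I compute $\binom{6}{4} = 15$, so that $\frac{2}{5}\binom{6}{4} = \frac{2}{5} \cdot 15 = 6$. Substituting these two values into the dichotomy $\delta_T = 0$ or $n - 3 \leq \delta_T \leq \frac{2}{5}\binom{n}{4}$ furnished by Lemma \ref{bounddia} yields exactly $\delta_T = 0$ or $3 \leq \delta_T \leq 6$, which is the desired statement.

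There is essentially no obstacle here, since the corollary is a pure specialization of the lemma. The only points worth checking are that the requirement $n \geq 5$ is met (it is, as $n = 6$) and that the upper bound $\frac{2}{5}\binom{6}{4}$ evaluates to the integer $6$, so that the interval $[3,6]$ contains no spurious non-integer endpoint; both are immediate. Consequently the proof reduces to the one-line substitution described above.
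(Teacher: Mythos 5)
Your proposal is correct and matches the paper exactly: the paper derives this corollary "immediately" from Lemma \ref{bounddia} by setting $n=6$, where $n-3=3$ and $\tfrac{2}{5}\tbinom{6}{4}=6$. Nothing further is needed.
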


\begin{proposition}\label{sixtran}
  Let $ T $ be a 6-tournament. Then 
  \item{\rm(i)} there exists a $6$-tournament $T'$ switching equivalent to $T$ such that $T'$ contains a transitive $4$-subtournament;
  \item{\rm(ii)} furthermore, if $\delta_T<6$, then there exists a $6$-tournament $T''$ switching equivalent to $T$ such that $T''$ contains a transitive $5$-subtournament.
\end{proposition}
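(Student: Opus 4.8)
The plan is to reduce both parts to a count of diamonds and then upgrade a diamond-free induced subtournament to an honestly transitive one by a localized switch. The key preliminary observation is the following \emph{localization of switching}: if $X\subseteq V(T)$ and $T[X]$ contains no diamond, then by Proposition \ref{Done} there is a subset $W_0\subseteq X$ such that switching $T[X]$ with respect to $W_0$ yields a transitive tournament. Switching the whole of $T$ with respect to this same $W_0\subseteq V(T)$ produces a tournament $T'$ switching equivalent to $T$ whose restriction to $X$ is exactly that transitive tournament: a global switch with respect to $W_0$ only reverses the arcs of $T[X]$ running between $W_0$ and $X\setminus W_0$, so it restricts on $X$ to the switch of $T[X]$ with respect to $W_0$ (this compatibility is the content, in equivalence form, of Proposition \ref{Minors}). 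Hence it suffices, for (i), to exhibit a $4$-subset inducing no diamond, and for (ii), a $5$-subset inducing no diamond.

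For (i) I would simply count the $\binom{6}{4}=15$ induced $4$-subtournaments of $T$. By Corollary \ref{rebounddia} we have $\delta_T\le 6$, so at most $6$ of these $4$-subsets are diamonds and at least $9$ induce a non-diamond $4$-tournament; such a $4$-tournament has determinant $1$ and hence contains no diamond. Choosing any such $4$-subset $X$ and applying the localized switch above gives the required $T'$ with a transitive $4$-subtournament $T'[X]$.

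For (ii) I would set up a double count between diamonds and $5$-subsets. Each diamond sits on a $4$-subset, which is contained in exactly $6-4=2$ of the $5$-subsets of $V(T)$; summing $\delta_{T[Y]}$ over the six $5$-subsets $Y$ therefore counts every diamond of $T$ exactly twice, giving $\sum_{|Y|=5}\delta_{T[Y]}=2\delta_T$. By Lemma \ref{fzt} each term $\delta_{T[Y]}$ equals $0$ or $2$. If $\delta_T<6$, then the total is strictly less than $12$, so the six terms cannot all equal $2$; hence some $5$-subset $Y$ satisfies $\delta_{T[Y]}=0$, i.e.\ $T[Y]$ is diamond-free. Applying the localized switch to $Y$ then yields $T''$ with a transitive $5$-subtournament $T''[Y]$.

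The only genuinely delicate point is the localization step itself: one must ensure that a global switch of $T$ restricts on $X$ to precisely the intended switch of $T[X]$, so that a subtournament which is merely switching equivalent to a transitive one can be made literally transitive while remaining inside the switching class of $T$. Once this is in hand, both parts are immediate from the diamond counts above, since the hard quantitative inputs — the bound $\delta_T\le 6$ and the dichotomy $\delta_{T[Y]}\in\{0,2\}$ — are already supplied by Corollary \ref{rebounddia} and Lemma \ref{fzt}.
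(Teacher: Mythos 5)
Your proof is correct and follows essentially the same route as the paper: part (i) by comparing $\delta_T\le 6$ with the $15$ four-subsets, part (ii) by the double count of diamonds over $5$-subsets (which the paper invokes as "the proof of Lemma \ref{bounddia}") combined with Lemma \ref{fzt}, and in both cases upgrading a diamond-free induced subtournament to a genuinely transitive one via a switch with respect to a set $W_0\subseteq X$, exactly as the paper does with its set $W\subseteq V(T_1)\subseteq V(T)$. Your explicit justification of the localization step is a welcome addition, since the paper leaves that compatibility implicit.
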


\begin{proof}
	By Corollary \ref{rebounddia}, we have $\delta_T\leq 6$, but $T$ has $\tbinom{6}{4}=15$ $4$-subtournaments, then there exists a $4$-subtournament $T_1$ of $T$ such that $T_1$ is not a diamond. By Proposition \ref{Done}, $T_1$ is switching equivalent to a transitive tournament $T_2$ with respect to a subset $W$ of $V(T_1)$, and thus there exists a $6$-tournament $T'$ switching equivalent to $T$ such that $T'$ contains the transitive $4$-subtournament $T_2$ with respect to $W$ as $W\subseteq V(T_1) \subseteq V(T)$. Then (i) holds.
	
	Furthermore, if $\delta_T<6$, then there exists at least one $5$-subtournament $T_3$ of $T$ such that $T_3$ contains no diamonds by the proof of Lemma \ref{bounddia}, and thus there exists a $6$-tournament $T''$ switching equivalent to $T$ such that $T''$ contains a transitive $5$-subtournament by Proposition \ref{Done} and similar proof of (i). Therefore, (ii) holds.
\end{proof}

Before giving the next property of $6$-tournaments, we introduce some definitions and notation.

\begin{definition}\label{defpsi}
Let $T$ be an $n$-tournament, $X$ be a subset of $V(T)$ such that $T[X]$ is transitive and $|X|=k$. For any $u\in V(T)\backslash X$ and the ordering of $X$, $\{v_1,\ldots,v_k\}$, which satisfies $v_1\rightarrow v_2\rightarrow \cdots \rightarrow v_k$ in $T$, we define the dominating relation between $u$ and $X$ by $\psi_T(u,X)=(\alpha_1,\ldots,\alpha_t)$, where nonzero integers $\alpha_1,\ldots,\alpha_t$ and a partition $X(i,\alpha_i)(i=1,\cdots,t)$ of $X$ satisfy that $|\alpha_1|+\cdots+|\alpha_t|=k$, $\alpha_i\alpha_{i+1}<0$ for $1\leq i\leq t-1$, $X(1,\alpha_1)=\{v_1,\ldots,v_{|\alpha_1|}\}$, $X(j  ,\alpha_j)=\{v_{|\alpha_1|+\cdots+|\alpha_{j-1}|+1},\ldots,v_{|\alpha_1|+\cdots+|\alpha_j|}\}$ for $2\leq j\leq t$, and the arcs between $u$ and $X$ satisfy that $u\rightarrow X(i,\alpha_i)$ if $\alpha_i>0$, and $u\leftarrow X(i,\alpha_i)$ if $\alpha_i<0$.
\end{definition}

In Definition \ref{defpsi}, by the definition of $\alpha_i$, it is easy to see that $1\leq t\leq n-1$, $u\leftarrow X(i+1,\alpha_{i+1})$ if $u\rightarrow X(i,\alpha_i)$, and $u\rightarrow X(i+1,\alpha_{i+1})$ if $u\leftarrow X(i,\alpha_i)$. Now we give a construction of tournaments.

Let $ L_n $ be an $n$-tournament with $V(L_n)=\{u_1,\ldots,u_n\}$ such that $L_n[X_u]$ is transitive with $u_1\rightarrow u_2\rightarrow \cdots \rightarrow u_{n-1}$, and $\psi_{L_n}(u_n,X_u)=((-1)^0,(-1)^1,\ldots,(-1)^{n-2})$, where $X_u=\{u_1,\ldots,u_{n-1}\}$. If an $n$-tournament $T$ is isomorphic to $L_n$, we say $T$ is $L_n$.

\begin{figure}[h]
	\centering
	\includegraphics[scale=0.25]{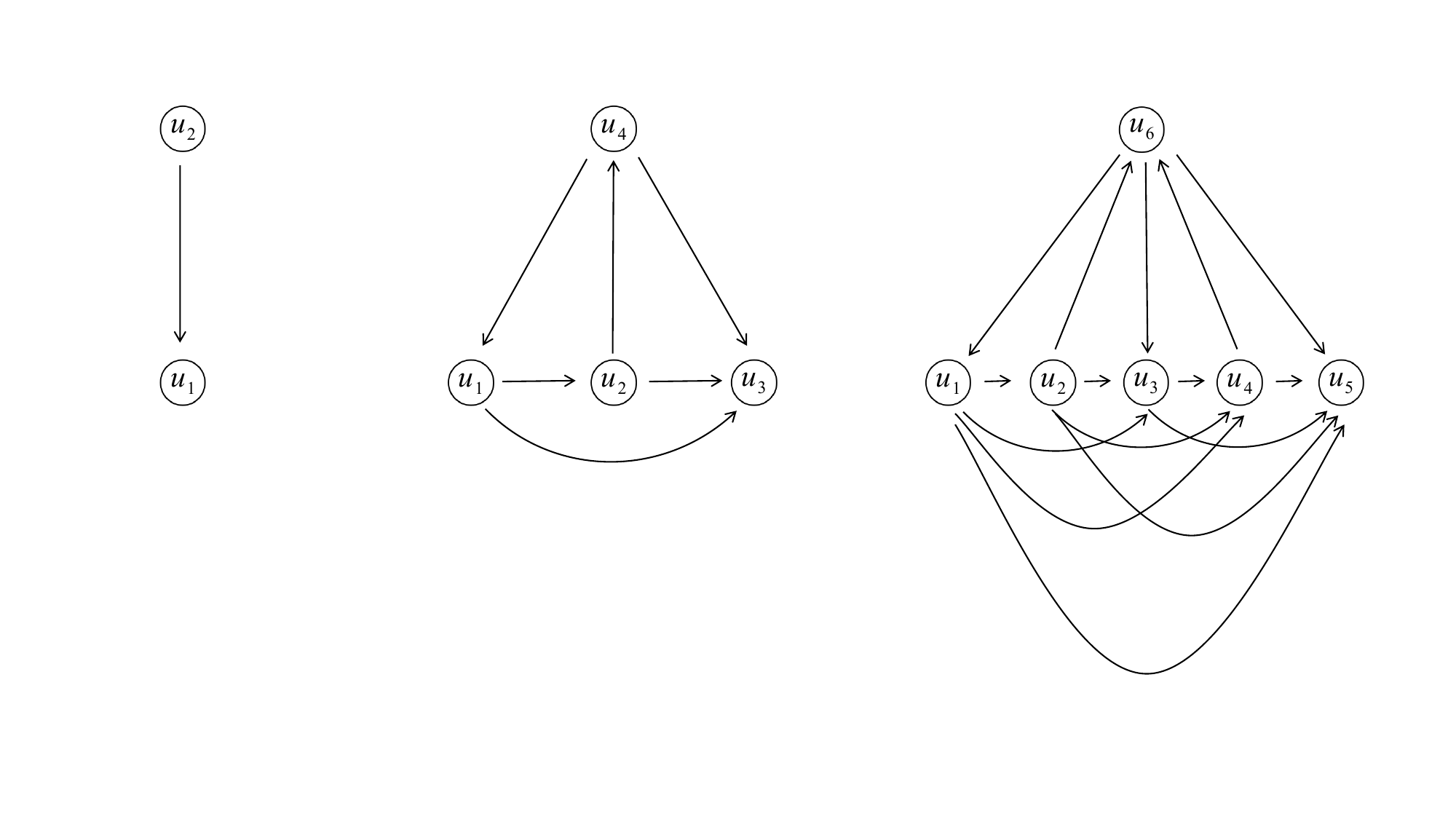}
	\caption{$L_2$, $L_4$ and $L_6$}
\end{figure}

Clearly, $L_2$ is a transitive tournament, $L_4$ is a diamond. In this paper, we use $L_n(a_1,\ldots,a_n)$ to denote a transitive blowup of $L_n$ where $u_i$ is replaced by a transitive tournament with order $a_i$ for each $i\in \{1,\ldots,n\}$. The structures of $L_2$, $L_4$ and $L_6$ are shown in Figure $1$.

\begin{lemma}\label{diainL}
	Let $T$ be an $n$-tournament with $n$ vertices $v_1,\ldots,v_{n-1},v_n$ such that $T[X]$ is transitive with $v_1\rightarrow \cdots \rightarrow v_{n-1}$, $\psi_T(v_n,X)=(\alpha_1,\alpha_2,\ldots,\alpha_t)$ where $X=\{v_1,\ldots,v_{n-1}\}$. If $D$ is a diamond of $T$, then $v_n\in V(D)$, and there exist $i_1,i_2,i_3$ such that $1\leq i_1<i_2<i_3\leq t$, $|V(T)\cap X(i_j,\alpha_{i_j})|=1$ for $1\leq j\leq 3$, $\alpha_{i_1}\alpha_{i_2}<0$ and $\alpha_{i_2}\alpha_{i_3}<0$.
\end{lemma}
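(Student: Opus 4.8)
The plan is to determine which vertices of $D$ must lie outside $X$ and then read the block structure off the orientations of the arcs incident with $v_n$. The decisive observation is that $T[X]$ is transitive, so $X$ contains no $3$-cycle. A diamond is by definition built around a $3$-cycle $C$ (its fourth vertex being the apex), and $C$ is a directed triangle, hence cannot be a subtournament of the transitive $T[X]$. Since $V(T)=X\cup\{v_n\}$, this forces $v_n\in V(C)\subseteq V(D)$, which already gives the first assertion; moreover $v_n$ is one of the three vertices of $C$ rather than the apex. Consequently the three vertices of $V(D)\setminus\{v_n\}$, namely the other two vertices of $C$ and the apex, all lie in $X$.

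Next I would analyse $C$. Write its two $X$-vertices as $v_a,v_b$ with $a<b$, so that $v_a\rightarrow v_b$. A directed triangle on $\{v_a,v_b,v_n\}$ with $v_a\rightarrow v_b$ is a $3$-cycle precisely when $v_b\rightarrow v_n$ and $v_n\rightarrow v_a$. By the definition of $\psi_T(v_n,X)$, the relation $v_n\rightarrow v_a$ places $v_a$ in a block $X(p,\alpha_p)$ with $\alpha_p>0$, and $v_b\rightarrow v_n$ places $v_b$ in a block $X(q,\alpha_q)$ with $\alpha_q<0$. Because the blocks follow the transitive order, $a<b$ gives $p\leq q$, and the opposite signs force $p<q$.

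Then I would treat the apex $v_c$ in two cases. If $v_c\rightarrow\{v_n,v_a,v_b\}$, then $v_c\rightarrow v_n$ puts $v_c$ in a negative block while $v_c\rightarrow v_a$ forces $c<a$; the opposite sign to $v_a$ then places $v_c$ in a block $X(p',\alpha_{p'})$ with $\alpha_{p'}<0$ and $p'<p$, so that $(i_1,i_2,i_3)=(p',p,q)$ carries signs $(-,+,-)$. Symmetrically, if $\{v_n,v_a,v_b\}\rightarrow v_c$, then $v_n\rightarrow v_c$ and $v_b\rightarrow v_c$ put $v_c$ in a block $X(q',\alpha_{q'})$ with $\alpha_{q'}>0$ and $q'>q$, so that $(i_1,i_2,i_3)=(p,q,q')$ carries signs $(+,-,+)$. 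In either case $i_1<i_2<i_3$, consecutive blocks carry opposite signs, whence $\alpha_{i_1}\alpha_{i_2}<0$ and $\alpha_{i_2}\alpha_{i_3}<0$, and each of the three blocks meets $V(D)$ in exactly one of $v_a,v_b,v_c$.

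I do not expect a genuine obstacle; the whole argument is the bookkeeping of the last two steps. The only points demanding care are translating each arc direction into the sign of the corresponding block through the $\psi$-convention, and the repeated use of the fact that two vertices of $X$ lying in blocks of opposite sign must occupy distinct blocks, whose relative order is pinned down by the transitive order on $X$. Verifying that the apex block is distinct from both triangle-blocks is precisely what makes $i_1,i_2,i_3$ genuinely distinct.
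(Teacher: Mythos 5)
Your proof is correct. The first assertion ($v_n\in V(D)$) is argued exactly as in the paper: the $3$-cycle of $D$ cannot live inside the transitive $T[X]$. For the main part, however, you take a genuinely different and more constructive route. The paper classifies how the three vertices of $V(D)\cap X$ can distribute over the blocks $X(k,\alpha_k)$ — multiplicities $(3)$, $(2,1)$, or $(1,1,1)$ — and eliminates every configuration other than ``three distinct blocks with alternating signs'' by showing that in each excluded case $D$ is transitive or switching equivalent to a transitive tournament (hence not a diamond, via the characterization behind Proposition \ref{Done}). You instead decompose $D$ explicitly into its $3$-cycle $\{v_a,v_b,v_n\}$ and apex $v_c$, translate each arc at $v_n$ into the sign of the corresponding block, and use the transitive order on $X$ to pin down the relative block indices, obtaining the sign pattern $(-,+,-)$ or $(+,-,+)$ according to whether the apex dominates or is dominated by the cycle. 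Your argument needs only the definition of a diamond and the $\psi$-convention, avoids the switching-equivalence step entirely, and yields slightly more information (which vertex sits in which block); the paper's elimination argument is shorter to write because ``not a diamond'' is dispatched uniformly by one cited fact. Both are complete; note only that the statement's condition $|V(T)\cap X(i_j,\alpha_{i_j})|=1$ is evidently a typo for $|V(D)\cap X(i_j,\alpha_{i_j})|=1$, which is what both you and the paper actually establish.
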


\begin{proof}
	If $v_n\notin V(D)$, then $V(D)\subseteq X$. Since $T[X]$ is transitive, $D$ is not a diamond, a contradiction. Therefore, $v_n\in V(D)$.
	
	Now, clearly, $|V(D)\cap X|=3$. If there exists $\alpha_k$ such that $|V(D)\cap X(k,\alpha_k)|=3$, it is easy to see that $D$ is transitive, a contradiction. 
	
	If there exists $\alpha_{k_1}$ such that $|V(D)\cap X(k_1,\alpha_{k_1})|=2$, then there exists another $\alpha_{k_2}$ such that $|V(D)\cap X(k_2,\alpha_{k_2})|=1$. If $\alpha_{k_1}\alpha_{k_2}>0$, it is easy to see that $D$ is transitive, a contradiction; If $\alpha_{k_1}\alpha_{k_2}<0$, then $D$ is switching equivalent to a transitive tournament with respect to $W=V(D)\cap X(k_2,\alpha_{k_2})$, hence $D$ is not a diamond, a contradiction.
	
	Hence, there exist $i_1,i_2,i_3$ such that $i_1<i_2<i_3$, $|V(D)\cap X(i_j,\alpha_{i_j})|=1$ for $j=1,2,3$. If $\alpha_{i_1}\alpha_{i_2}>0$ or $\alpha_{i_2}\alpha_{i_3}>0$, similarly, $D$ is not a diamond, a contradiction. Therefore, $\alpha_{i_1}\alpha_{i_2}<0$ and $\alpha_{i_1}\alpha_{i_2}<0$. We complete the proof.
\end{proof}

The following proposition show that the relation between determinants and the number of diamonds in $6$-tournaments, moreover, it provides a necessary and sufficient condition for determining whether the determinant of a $6$-tournament is $25$.

\begin{proposition}\label{sixdd}
		Let $ T $ be a $ 6 $-tournament. Then
$$\det(T)= \begin{cases}1, & \text { if } \delta_T=0;\\ 1 \text{ or } 9, &\text { if } \delta_T\in \{3,4\};\\25, &\text { if }  \delta_T=5;\\ 49 \text{ or } 81, &\text { if }  \delta_T=6.\end{cases}$$
\end{proposition}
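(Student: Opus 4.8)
The plan is to exploit that both the determinant and the diamond count $\delta_T$ are invariants of the switching class (Proposition \ref{Minors} and Lemma \ref{diamond}), and to split the argument by the value of $\delta_T$, which by Corollary \ref{rebounddia} lies in $\{0,3,4,5,6\}$. The two regimes $\delta_T\le 5$ and $\delta_T=6$ are separated precisely by the existence of a transitive $5$-subtournament (Proposition \ref{sixtran}(ii)), and I would handle them by quite different means.

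For $\delta_T\le 5$ I would first replace $T$ by a switching-equivalent tournament containing a transitive $5$-subtournament $\{v_1,\ldots,v_5\}$ with $v_1\to\cdots\to v_5$, which exists by Proposition \ref{sixtran}(ii); then set $s_j=1$ if $v_j\to v_6$ and $s_j=-1$ otherwise. Expanding the Pfaffian of $S_T$ along the index $6$ and using that each deleted minor is the Pfaffian of a transitive $4$-tournament in its natural order (equal to $1$), I obtain $\mathrm{Pf}(S_T)=s_1-s_2+s_3-s_4+s_5$, so $\det(T)=(s_1-s_2+s_3-s_4+s_5)^2\le 25$ and hence $\det(T)\in\{1,9,25\}$. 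The same $4\times4$ computation shows that $\{v_a,v_b,v_c,v_6\}$ with $a<b<c$ is a diamond exactly when $s_a=s_c\neq s_b$, and by Lemma \ref{diainL} every diamond of $T$ has this form. Decomposing $(s_1,\ldots,s_5)$ into its maximal runs of equal sign and counting such triples (one vertex from each of three runs whose selected signs alternate), I would check that $\delta_T=5$ forces the fully alternating pattern and therefore $\det(T)=25$, that any non-alternating pattern yields $\delta_T\le 4$ and $\det(T)\in\{1,9\}$, and that $\delta_T=0$ is the transitive case with $\det(T)=1$. This disposes of the first three branches.

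For $\delta_T=6$ there is no transitive $5$-subtournament, so I would instead use symmetric functions of the eigenvalues. Since every odd-order principal submatrix of the skew-symmetric matrix $S_T$ has determinant $0$, its characteristic polynomial is $x^6+E_2x^4+E_4x^2+E_6$, where $E_2,E_4$ are the sums of the principal $2\times2$ and $4\times4$ minors. Here $E_2=\binom{6}{2}=15$ (each $2\times2$ minor equals $1$) and $E_4=9\delta_T+(15-\delta_T)=15+8\delta_T=63$ (each $4\times4$ minor is $9$ for a diamond and $1$ otherwise). Writing the nonzero eigenvalues of $S_T$ as $\pm i\sqrt{\mu_j}$ with $\mu_1,\mu_2,\mu_3\ge 0$, this reads $\mu_1+\mu_2+\mu_3=15$, $\mu_1\mu_2+\mu_1\mu_3+\mu_2\mu_3=63$, and $\det(T)=\mu_1\mu_2\mu_3$. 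Viewing $\mu_1,\mu_2,\mu_3$ as the three nonnegative real roots of $t^3-15t^2+63t-\det(T)$, the product is extremal precisely at the double-root configurations $(3,3,9)$ and $(7,7,1)$, forcing $\det(T)\in[49,81]$; as $\det(T)$ must be an odd square, $\det(T)\in\{49,81\}$.

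I expect the $\delta_T=6$ branch to be the main obstacle: the absence of a transitive $5$-subtournament blocks the clean Pfaffian reduction, and a direct analysis starting from a transitive $4$-subtournament plus two further vertices would be unwieldy. The symmetric-function argument is what makes it tractable, the key point being that the interval $[49,81]$ forced by $E_2=15$ and $E_4=63$ contains exactly two odd squares, namely its endpoints. The remaining work is careful bookkeeping, namely fixing the signs in the Pfaffian expansion and confirming, via the run decomposition of $(s_1,\ldots,s_5)$, that $\delta_T=5$ occurs only for the alternating pattern so that $\det(T)=25$ there.
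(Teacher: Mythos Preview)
Your proof is correct, and the treatment of the two regimes diverges from the paper in interesting ways.

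For $\delta_T\le 5$ you and the paper both pass to a switching representative containing a transitive $5$-subtournament (Proposition~\ref{sixtran}(ii)) and then analyse the sign vector $(s_1,\dots,s_5)$ of $v_6$ against it. The paper phrases the casework in terms of $\psi_T(v_6,X)=(\alpha_1,\dots,\alpha_t)$ and, for $t\in\{3,4\}$, switches to a transitive blowup of $L_4$ to read off $\det(T)\in\{1,9\}$ via Theorem~\ref{Dthree}, while for $t=5$ it identifies $L_6$ directly. Your Pfaffian identity $\det(T)=(s_1-s_2+s_3-s_4+s_5)^2$ and the diamond criterion $s_a=s_c\ne s_b$ give the same information more compactly, and the run decomposition of $(s_1,\dots,s_5)$ is exactly the paper's $t$-casework. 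So this half is essentially the same argument with a cleaner bookkeeping device.

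For $\delta_T=6$ the approaches are genuinely different. The paper starts from a transitive $4$-subtournament (Proposition~\ref{sixtran}(i)), normalises $\psi_{T'}(v_5,X)$ by switching, and then runs through the possibilities for $\psi_{T'}(v_6,X)$, computing three explicit $6\times 6$ determinants to obtain $\{49,81\}$. Your symmetric-function argument bypasses all of this: from $E_2=15$ and $E_4=15+8\delta_T=63$ you constrain the squared singular values $\mu_1,\mu_2,\mu_3$ and observe that the cubic $t^3-15t^2+63t-c$ has three nonnegative real roots only when its values at the critical points $t=3,7$ straddle zero, i.e.\ $49\le c\le 81$; since $c=\det(T)$ is an odd square this forces $c\in\{49,81\}$. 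This is shorter and more conceptual; the paper's case analysis, on the other hand, exhibits concrete tournaments realising each value and is reused later (the Subcase~2.2--2.4 computations feed into Proposition~\ref{crforL6}).
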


\begin{proof}
	By Proposition \ref{Done}, $\delta_T=0\Leftrightarrow T\in \mathcal{D}_1$, then $\det(T)=1$ since $T$ is even order.
	
	Now we consider $\delta_T\neq 0$, say, $3\leq \delta_T \leq 6$ by Corollary \ref{rebounddia}, then we complete the proof by the following two cases.
	
	\textbf{Case 1}: $3\leq \delta_T \leq5$.
	
	By Proposition \ref{sixtran}, there exists a switch of $T$, $T'$, contains a transitive $5$-subtournament. Let $V(T')=\{v_1,v_2,v_3,v_4,v_5,v_6\}$ such that $T'[X]$ is transitive with $v_1\rightarrow v_2\rightarrow \cdots \rightarrow v_5$, $\psi_{T'}(v_6,X)=(\alpha_1,\alpha_2,\ldots,\alpha_t)$, where $X=\{v_1,v_2,v_3,v_4,v_5\}$. Then $1\leq t\leq 5$, $|\alpha_1|+|\alpha_2|+\cdots+|\alpha_t|=5$, and $\alpha_i\alpha_{i+1}<0$ for $1\leq i\leq t-1$.
	
	\textbf{Subcase 1.1}: $t=1$.
	
	It is evident that $T'$ is a transitive tournament and $\delta_{T'}=0=\delta_T$, a contradiction.
	
	\textbf{Subcase 1.2}: $t=2$.
	
	$T'$ is switching equivalent to a transitive tournament with respect to the subset $W=X(2,\alpha_2)$, thus $\delta_{T'}=0=\delta_T$, a contradiction.	
	
	\textbf{Subcase 1.3}: $t=3$.
	
	If $D$ is a diamond in $T'$, by Lemma \ref{diainL}, there exists an ordering $\{v_1',v_2',v_3',v_4'\}$ of $V(D)$ such that $v_1'\in X(1,\alpha_1)$, $v_2'\in X(2,\alpha_2)$, $v_3'\in X(3,\alpha_3)$, $v_4'=v_6$. Moreover, if $T_s$ is a $4$-subtournament of $T'$ with vertex set $\{v_1',v_2',v_3',v_4'\}$ such that $v_1'\in X(1,\alpha_1)$, $v_2'\in X(2,\alpha_2)$, $v_3'\in X(3,\alpha_3)$, $v_4'=v_6$, then it is easy to see that $T_s$ is a diamond. Hence $\delta_T=\delta_{T'}=|\alpha_1||\alpha_2||\alpha_3|\in\{3,4\}$ by $|\alpha_1|+|\alpha_2|+|\alpha_3|=5$.
	
	Let $W$ be a subset of $V(T')$ such that $W=\emptyset$ if $\alpha_1>0$ and $W=\{v_6\}$ if $\alpha_1<0$. Then $T'$ is switching equivalent to a transitive blowup of $L_4$ with respect to $W$. Thus $T'\in \mathcal{D}_3$ by Theorem \ref{Dthree}, and $\det(T)=\det(T')\in\{1,9\}$.
	
	Therefore, $\delta_T=\delta_{T'}\in \{3,4\}$, $\det(T)=\det(T')\in \{1,9\}$ in this subcase. 
	
	\textbf{Subcase 1.4}: $t=4$.
	
	Let $W$ be a subset of $V(T')$ such that $W=X(4,\alpha_4)$ if $\alpha_1>0$ and $W=X(4,\alpha_4)\cup \{v_6\}$ if $\alpha_1<0$. Then $T'$ is switching equivalent to $L_4(|\alpha_1|+|\alpha_4|,|\alpha_2|,|\alpha_3|,1)$ with respect to $W$. By Subcase 1.3, $\delta_T=\delta_{T'}=(|\alpha_1|+|\alpha_4|)|\alpha_2||\alpha_3|\in \{3,4\}$ and $\det(T)=\det(T')=\det(L_4(|\alpha_1|+|\alpha_4|,|\alpha_2|,|\alpha_3|,1))\in \{1,9\}$ by Theorem \ref{Dthree}.
	
	\textbf{Subcase 1.5}: $t=5$.
	
	It is not difficult to check that the number of diamonds of $L_6$ is $5$. Moreover, by direct calculation, $\det(L_6)=25$. Let $W$ be a subset of $V(T')$ such that $W=\emptyset$ if $\alpha_1>0$ and $W=\{v_6\}$ if $\alpha_1<0$. Then $T'$ is switching equivalent to $L_6$ with respect to $W$. Hence, $\det(T)=\det(T')=\det(L_6)=25$ and $\delta_T=\delta_{T'}=5$.
	
	Combining the above arguments, we have $\det(T)\in \{1,9\}$ if $\delta_T\in \{3,4\}$, and $\det(T)=25$ if $\delta_T=5$.
	
	\textbf{Case 2}: $\delta_T=6$.
	
	By Proposition \ref{sixtran}, there exists a switch of $T$, $T'$, contains a transitive $4$-subtournament. Let $V(T')=\{v_1,v_2,v_3,v_4,v_5,v_6\}$ such that $T'[X]$ is transitive with $v_1\rightarrow v_2\rightarrow v_3\rightarrow v_4$, where $X=\{v_1,v_2,v_3,v_4\}$. By the proof of Lemma \ref{bounddia}, $\delta_{T'}=6$ if and only if each $5$-subtournament in $T'$ contains two diamonds.
	
	Let $\psi_{T'}(v_5,X)=(\alpha_1,\ldots,\alpha_{t_1})$, $\psi_{T'}(v_6,X)=(\beta_1,\ldots,\beta_{t_2})$. Then $1\leq t_1,t_2\leq 4$. If $t_1\leq 2$ or $t_2\leq 2$, then $T'[v_1,v_2,v_3,v_4,v_5]$ or $T'[v_1,v_2,v_3,v_4,v_6]$ contains no diamonds, a contradiction. Therefore, $t_1\geq 3$ and $t_2\geq 3$. Take
	 $$A_1=\{(2,-1,1),(-2,1,-1)\},A_2=\{(1,-1,2),(-1,1,-2)\},$$ $$A_3=\{(1,-2,1),(-1,2,-1)\},A_4=\{(1,-1,1,-1),(-1,1,-1,1)\}.$$
Then there exist $i,j\in\{1,2,3,4\}$ such that $\psi_{T'}(v_5,X)\in A_i$ and $\psi_{T'}(v_6,X)\in A_j$. 
	
	If $\psi_{T'}(v_5,X)\in A_2$, then $T'$ is switching equivalent to $T'_1$ with respect to $W=\{v_3,v_4\}$ of $V(T')$, and $\psi_{T'_1}(v_5,X)\in A_1$, where $v_3\rightarrow v_4\rightarrow v_1\rightarrow v_2$ in $T'_1$.
		
	If $\psi_{T'}(v_5,X)\in A_3$, then $T'$ is switching equivalent to $T'_2$ with respect to $W=\{v_1\}$ of $V(T')$, and $\psi_{T'_2}(v_5,X)\in A_1$, where $v_2\rightarrow v_3\rightarrow v_4\rightarrow v_1$ in $T'_2$.
	
	If $\psi_{T'}(v_5,X)\in A_4$, then $T'$ is switching equivalent to $T'_3$ with respect to $W=\{v_4\}$ of $V(T')$, and $\psi_{T'_3}(v_5,X)\in A_1$, where $v_4\rightarrow v_1\rightarrow v_2\rightarrow v_3$ in $T'_3$.
	
	Therefore, if $\psi_{T'}(v_5,X)\notin A_1$, there exists a switch of $T'$, $T''$, such that $\psi_{T''}(v_5,X)\in A_1$. Since $T'$ and its switch have the same determinant and the same number of diamonds, without loss of generality, we can assume that $\psi_{T'}(v_5,X)\in A_1$. Now we consider the following four subcases.
	
	\textbf{Subcase 2.1}: $\psi_{T'}(v_6,X)\in A_1$.
	
	In this subcase, $T'[V(T')\backslash\{v_4\}]$ contains no diamonds, a contradiction.
	
	\textbf{Subcase 2.2}: $\psi_{T'}(v_6,X)\in A_2$.
	
	It is easy to see that there is a switch of $T'$, $T''$, such that $\psi_{T''}(v_5,X)=(2,-1,1)$ and $\psi_{T''}(v_6,X)=(1,-1,2)$. The skew-adjacency matrix of $ T'' $ is	
	$$ S_{T''}=\left(\begin{array}{rrrrrr}
	0 & 1 & 1 & 1 & -1 & -1 \\
	-1 & 0 & 1 & 1 & -1 & 1 \\
	-1 & -1 & 0 & 1 & 1 & -1 \\
	-1 & -1 & -1 & 0 & -1 & -1 \\
	1 & 1 & -1 & 1 & 0 & \alpha \\
	1 & -1 & 1 & 1 & -\alpha & 0
	\end{array}\right), $$where $\alpha=1$ if $v_5\rightarrow v_6$, and $\alpha=-1$ otherwise.
	
	By direct calculation, we have $\det(T'')=81$ if $\alpha=1$, $\det(T'')=49$ otherwise, and $\det(T)\in\{49,81\}$ by $\det(T)=\det(T'')$.
	
	\textbf{Subcase 2.3}: $\psi_{T'}(v_6,X)\in A_3$.
	
	It is easy to see that there is a switch of $T'$, $T''$, such that $\psi_{T''}(v_5,X)=(2,-1,1)$ and $\psi_{T''}(v_6,X)=(1,-2,1)$. If $v_5\leftarrow v_6$ in $T''$, then $T''[V(T'')\backslash\{v_4\}]$ contains no diamonds, a contradiction. Therefore, $v_5\rightarrow v_6$ in $T''$, and the skew-adjacency matrix of $ T'' $ is
	$$ S_{T''}=\left(\begin{array}{rrrrrr}
	0 & 1 & 1 & 1 & -1 & -1 \\
	-1 & 0 & 1 & 1 & -1 & 1 \\
	-1 & -1 & 0 & 1 & 1 & 1 \\
	-1 & -1 & -1 & 0 & -1 & -1 \\
	1 & 1 & -1 & 1 & 0 & 1 \\
	1 & -1 & -1 & 1 & -1 & 0
	\end{array}\right). $$
	
	By direct calculation, we have $\det(T)=\det(T'')=49$.
	
	\textbf{Subcase 2.4}: $\psi_{T'}(v_6,X)\in A_4$.
	
	It is easy to see that there is a switch of $T'$, $T''$, such that $\psi_{T''}(v_5,X)=(2,-1,1)$ and $\psi_{T''}(v_6,X)=(1,-1,1,-1)$. If $v_5\leftarrow v_6$ in $T''$, then $T''[V(T'')\backslash\{v_3\}]$ contains no diamonds, a contradiction. Therefore, $v_5\rightarrow v_6$ in $T''$, and the skew-adjacency matrix of $ T'' $ is	
	$$ S_{T''}=\left(\begin{array}{rrrrrr}
	0 & 1 & 1 & 1 & -1 & -1 \\
	-1 & 0 & 1 & 1 & -1 & 1 \\
	-1 & -1 & 0 & 1 & 1 & -1 \\
	-1 & -1 & -1 & 0 & -1 & 1 \\
	1 & 1 & -1 & 1 & 0 & 1 \\
	1 & -1 & 1 & -1 & -1 & 0
	\end{array}\right). $$
	
	By direct calculation, we have $\det(T)=\det(T'')=49$.	
	
	Combining the above arguments, we have $\det(T)\in \{49,81\}$ if $\delta_T=6$.	
\end{proof}

Now we obtain some necessary and sufficient conditions for determining whether the determinant of a $6$-tournament is $25$.

\begin{theorem}\label{resixdd}
	Let $T$ be a $6$-tournament. Then the following assertions are equivalent:
	\item{\rm(i)} $\det(T)=25$.
	\item{\rm(ii)} $\delta_T=5$.
	\item{\rm(iii)} $T$ is switching equivalent to $L_6$.
	\item{\rm(iv)} $T\in \mathcal{D}_5\backslash \mathcal{D}_3$.
\end{theorem}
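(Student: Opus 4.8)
The plan is to prove the four statements pairwise equivalent by establishing the cycle (i)$\,\Leftrightarrow\,$(ii)$\,\Rightarrow\,$(iii)$\,\Rightarrow\,$(iv)$\,\Rightarrow\,$(i), leaning heavily on Proposition \ref{sixdd}, which already packages almost all of the combinatorial content. For (i)$\,\Leftrightarrow\,$(ii) I would argue directly from that proposition: by Corollary \ref{rebounddia} we have $\delta_T\in\{0,3,4,5,6\}$, and inspecting the five cases of Proposition \ref{sixdd} shows that the value $25$ is attained exactly when $\delta_T=5$, since the cases $\delta_T=0$, $\delta_T\in\{3,4\}$ and $\delta_T=6$ produce determinants in $\{1\}$, $\{1,9\}$ and $\{49,81\}$ respectively, none equal to $25$. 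Hence $\det(T)=25\Leftrightarrow\delta_T=5$.

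For (ii)$\,\Rightarrow\,$(iii) I would revisit the case analysis inside the proof of Proposition \ref{sixdd}. Since $\delta_T=5$, we are in Case $1$ ($3\leq\delta_T\leq5$), where some switch $T'$ of $T$ contains a transitive $5$-subtournament $T'[X]$ and $\psi_{T'}(v_6,X)=(\alpha_1,\ldots,\alpha_t)$. The subcases $t=1,2$ force $\delta_{T'}=0$, and the subcases $t=3,4$ force $\delta_{T'}=|\alpha_1||\alpha_2||\alpha_3|\in\{3,4\}$ (possibly with the first block enlarged); none of these equals $5$. Thus we must be in the subcase $t=5$, where it is shown that $T'$ is switching equivalent to $L_6$. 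As switching equivalence is transitive, $T$ is switching equivalent to $L_6$, giving (iii).

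The remaining two implications are about membership in $\mathcal{D}_5\backslash\mathcal{D}_3$. For (iii)$\,\Rightarrow\,$(iv), since $\mathcal{D}_k$ is closed under switching it suffices to verify $L_6\in\mathcal{D}_5\backslash\mathcal{D}_3$: every proper even subtournament of $L_6$ has order $2$ or $4$, hence determinant at most $9<25$, while the unique $6$-subtournament is $L_6$ itself with $\det(L_6)=25$, and the odd subtournaments contribute determinant $0$; so all subtournament determinants are at most $25$, giving $L_6\in\mathcal{D}_5$, and $\det(L_6)=25>9=3^2$ gives $L_6\notin\mathcal{D}_3$. Finally, for (iv)$\,\Rightarrow\,$(i), the hypothesis $T\in\mathcal{D}_5$ forces $\det(T)\leq25$, while $T\notin\mathcal{D}_3$ supplies a subtournament of determinant exceeding $9$; since determinants of even tournaments lie in $\{1,9,25,49,\ldots\}$, this determinant is at least $25$, hence exactly $25$ by the $\mathcal{D}_5$ bound, and as the proper even subtournaments (orders $2$ and $4$) have determinant at most $9$, the one realizing $25$ must be $T$ itself, so $\det(T)=25$.

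The whole argument is essentially bookkeeping once Proposition \ref{sixdd} is established; the only step demanding genuine care is (ii)$\,\Rightarrow\,$(iii), where I must confirm that among all configurations yielding $3\leq\delta_T\leq5$ the value $\delta_T=5$ is realized \emph{only} by the $t=5$ configuration, which is precisely the switch of $T$ identified with $L_6$. Everything else reduces to the elementary observation that for a $6$-tournament the even-order subtournaments have orders $2,4,6$, so a determinant larger than $9$ can come only from the whole tournament.
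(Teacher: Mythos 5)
Your proposal is correct and follows essentially the same route as the paper: (i)$\Leftrightarrow$(ii) read off from Proposition \ref{sixdd}, (ii)$\Rightarrow$(iii) extracted from the $t=5$ subcase inside that proposition's proof, and the link to $\mathcal{D}_5\backslash\mathcal{D}_3$ obtained from the observation that the proper even subtournaments of a $6$-tournament have order $2$ or $4$ and hence determinant at most $9$. The only difference is organizational (you close a cycle of implications where the paper proves (i)$\Leftrightarrow$(iv) directly), which does not change the substance.
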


\begin{proof}
	Clearly, $\text{(i)}\Leftrightarrow \text{(ii)}$ follows from Proposition \ref{sixdd}.
	
	If $\delta_T=5$, then $T$ is switching equivalent to $L_6$ by the proof of Proposition \ref{sixdd}. Moreover, if $T$ is switching equivalent to $L_6$, then $\delta_T=5$ since the number of diamonds of $L_6$ is $5$. Thus $\text{(ii)}\Leftrightarrow\text{(iii)}$.
	
	Since the determinant of a $2$-tournament is $1$, the determinant of a $4$-tournament is $9$ if it is a diamond and $1$ otherwise\cite{DIACHA}, and the determinant of a tournament with odd order is $0$, we have $\det(T)=25$ if and only if $T\in \mathcal{D}_5\backslash \mathcal{D}_3$. Thus $\text{(i)}\Leftrightarrow\text{(iv)}$.
	
	Combining the above arguments, we complete the proof.
\end{proof}

\section{Characterization of $\mathcal{D}_5$}\label{sec-unicyclic}
\hspace{1.5em}In this section, we characterize $\mathcal{D}_5$. The following proposition show that a transitive blowup of a tournament $R$ in $\mathcal{D}_k$ if $R$ in $\mathcal{D}_k$. 

\begin{proposition}{\rm(\!\!\text{ Boussaïri et al.}\cite{DTHREE})}\label{blowup}
	Let $R$ be an $n$-tournament with vertex set $V=\left\{v_1, \ldots, v_n\right\}$ and $a_1, \ldots, a_n$ be a sequence of positive integers, $R(a_1,\ldots,a_n)$ be a transitive $(a_1,\ldots,a_n)$-blowup of $R$. Then
	$$
	\det(R(a_1, \ldots, a_n))=\det(R[U]),
	$$
	where $U=\{v_i\in V: a_i \text{ is odd }\}$. In particular, if $R \in \mathcal{D}_k$, then $R\left(a_1, \ldots, a_n\right)\in \mathcal{D}_k$.
\end{proposition}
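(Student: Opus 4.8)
The plan is to prove the determinant formula by repeatedly deleting a pair of \emph{twin} vertices from a single block, each deletion shrinking one block size by $2$ while leaving the determinant unchanged; iterating collapses every block to size $0$ or $1$ according to the parity of $a_i$, leaving exactly $R[U]$. The ``in particular'' clause then follows because every induced subtournament of a transitive blowup is itself a transitive blowup of an induced subtournament of $R$.

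First I would isolate the key computational lemma. Call two distinct vertices $x,y$ \emph{twins} if $x\rightarrow y$ and $x,y$ have the same relation to every remaining vertex. Ordering the vertices so that $x,y$ occupy the last two positions, the skew-adjacency matrix takes the block form
$$S=\begin{pmatrix} B & c & c \\ -c^{\top} & 0 & 1 \\ -c^{\top} & -1 & 0 \end{pmatrix},$$
where $B$ is the skew-adjacency matrix of the remaining tournament and $c$ is the common column of adjacencies. Performing $R_n\to R_n-R_{n-1}$ and then $C_n\to C_n-C_{n-1}$ turns $S$ into
$$\begin{pmatrix} B & c & 0 \\ -c^{\top} & 0 & 1 \\ 0 & -1 & 0 \end{pmatrix},$$
and expanding along the last column (whose only nonzero entry is the $1$) yields $\det(S)=\det(B)$. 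Thus deleting a twin pair preserves the determinant; a couple of sign checks on $2\times 2$ and $3\times 3$ cases confirm the cofactor bookkeeping.

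Next I would apply this to the blowup. Fix a block $T_i$ with $a_i\geq 2$ and let $w_{a_i-1}\rightarrow w_{a_i}$ be its last two vertices in the transitive order. Every other vertex of $T_i$ dominates both of them, and by Definition~\ref{DefBLWOUP} every vertex outside $T_i$ relates identically to all of $V(T_i)$; hence $w_{a_i-1},w_{a_i}$ are twins. Deleting them yields the transitive blowup of $R$ with $a_i$ decreased by $2$, having the same determinant. Iterating over all blocks reduces each $a_i$ to its residue modulo $2$: a block reaching size $0$ disappears (equivalently $v_i$ is deleted from $R$), while a block with $a_i$ odd shrinks to a single vertex. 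What remains is the transitive blowup of $R[U]$ with all block sizes $1$, i.e.\ $R[U]$ itself, so $\det(R(a_1,\ldots,a_n))=\det(R[U])$.

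Finally, for the membership claim, I would observe that for any nonempty $W\subseteq V(R(a_1,\ldots,a_n))$ the induced subtournament $R(a_1,\ldots,a_n)[W]$ equals the transitive blowup $R[V'](b_i)_{v_i\in V'}$, where $V'=\{v_i: V(T_i)\cap W\neq\emptyset\}$ and $b_i=|V(T_i)\cap W|$: each $T_i[V(T_i)\cap W]$ is transitive and the inter-block arcs are inherited from $R$. Applying the formula just proved, $\det(R(a_1,\ldots,a_n)[W])=\det(R[U'])$ with $U'=\{v_i\in V': b_i \text{ odd}\}\subseteq V$, which is the determinant of a subtournament of $R$ and hence at most $k^2$ when $R\in\mathcal{D}_k$. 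Since $W$ is arbitrary, $R(a_1,\ldots,a_n)\in\mathcal{D}_k$. The one place demanding care is the sign/cofactor bookkeeping in the twin-deletion computation, together with checking that a size-$0$ block deletion genuinely returns a smaller blowup of the same shape; the rest is routine.
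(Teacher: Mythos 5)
Your proof is correct, but note that the paper does not actually prove this proposition: it is imported verbatim from Boussa\"{\i}ri et al.\ \cite{DTHREE}, so there is no in-paper argument to compare against. Your twin-deletion lemma checks out: with $x,y$ placed last the matrix is
$\left(\begin{smallmatrix} B & c & c \\ -c^{\top} & 0 & 1 \\ -c^{\top} & -1 & 0 \end{smallmatrix}\right)$,
the row and column operations give
$\left(\begin{smallmatrix} B & c & 0 \\ -c^{\top} & 0 & 1 \\ 0 & -1 & 0 \end{smallmatrix}\right)$,
and the two cofactor expansions contribute signs $(-1)^{(N-1)+N}$ and $-1$ whose product is $+1$, so $\det(S)=\det(B)$ as claimed; the last two vertices of each block are indeed twins in your sense, and the reduction terminates at $R[U]$. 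The argument for the membership clause (every induced subtournament of a transitive blowup is a transitive blowup of an induced subtournament of $R$) is also the natural one and is sound. One small point worth flagging: when $U=\emptyset$ your iteration empties the tournament entirely, so the formula only survives under the convention $\det(\emptyset)=1$; the paper handles this edge case separately in Remark~\ref{reblowup}, stating $\det(R(a_1,\ldots,a_n))=1$ there, which is exactly what your reduction yields. So your write-up in fact covers both the proposition and the remark in one pass.
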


\begin{remark}\label{reblowup}
	In fact, Proposition \ref{blowup} needs $U\neq \emptyset$. If $U= \emptyset$, say, all $a_i\text{ }(i=1,\ldots,n)$ are even, by using the same method in {\rm\cite{DTHREE}}, we have $\det(R(a_1, \ldots, a_n))=1$.
\end{remark}

\begin{theorem}\label{blowupclass}
	Let $R$ be an $n$-tournament, $R(a_1,\ldots,a_n)$ be a transitive blowup of $R$. Then $R\in \mathcal{D}_k$ if and only if $R(a_1,\ldots,a_n)\in \mathcal{D}_k$.  
\end{theorem}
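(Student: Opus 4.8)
The plan is to establish the two implications separately, and to observe at the outset that the forward implication is essentially already recorded in Proposition \ref{blowup}, so that the genuinely new content lies in the converse.

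For the forward direction, suppose $R\in\mathcal{D}_k$. I would appeal directly to Proposition \ref{blowup}: every subtournament of $R(a_1,\ldots,a_n)$ is itself a transitive blowup of some induced subtournament $R[J]$ of $R$ (keep only the blocks actually met, and restrict the transitive blocks to the chosen vertices, which stay transitive), so the determinant formula $\det(R(a_1,\ldots,a_n))=\det(R[U])$ reduces each relevant principal minor of the blowup to $\det(R[J'])$ for the sub-collection $J'$ of odd-sized blocks, a principal minor of $R$ and hence at most $k^2$ by hypothesis. The degenerate case in which the chosen blocks are all of even size is covered by Remark \ref{reblowup}, giving determinant $1\le k^2$. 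Thus $R(a_1,\ldots,a_n)\in\mathcal{D}_k$, and nothing beyond the cited results is needed here.

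For the converse, the key observation is that $R$ occurs as an induced subtournament of $R(a_1,\ldots,a_n)$. First I would pick one vertex $w_i$ from the transitive block replacing $v_i$ for each $i$, which is possible since every $a_i\ge 1$, and verify that the subtournament induced on $\{w_1,\ldots,w_n\}$ is isomorphic to $R$: for $i\ne j$ we have $w_i\rightarrow w_j$ exactly when $V(T_i)\rightarrow V(T_j)$, i.e.\ exactly when $v_i\rightarrow v_j$ in $R$, which is the defining adjacency of $R$. Next I would use the elementary fact that $\mathcal{D}_k$ is hereditary: if $T\in\mathcal{D}_k$ and $S$ is a subtournament of $T$, then every subtournament of $S$ is again a subtournament of $T$ and so has determinant at most $k^2$, whence $S\in\mathcal{D}_k$. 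Applying this with $T=R(a_1,\ldots,a_n)$ and $S\cong R$ yields $R\in\mathcal{D}_k$.

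Both steps are short, and there is no serious obstacle; the only point warranting care is to confirm that the transversal $\{w_1,\ldots,w_n\}$ induces a genuine copy of $R$ rather than merely a homomorphic image, which is immediate from Definition \ref{DefBLWOUP} because all arcs between distinct blocks point uniformly in the direction dictated by $R$. The substantive input (the determinant formula) is entirely supplied by Proposition \ref{blowup} and Remark \ref{reblowup}.
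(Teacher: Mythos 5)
Your proposal is correct and follows essentially the same route as the paper: the forward implication is exactly the ``in particular'' clause of Proposition \ref{blowup} (your unpacking of why every subtournament of the blowup is again a blowup of a subtournament of $R$ is the standard justification of that clause), and the converse is the paper's observation that $R=R(1,\ldots,1)$ sits inside $R(a_1,\ldots,a_n)$ as an induced subtournament together with the hereditary nature of $\mathcal{D}_k$.
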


\begin{proof}
	If $R\in \mathcal{D}_k$, then $R(a_1,\ldots,a_n)\in \mathcal{D}_k$ by Proposition \ref{blowup}.
	
	If $R(a_1,\ldots,a_n)\in \mathcal{D}_k$, we note that $R(a_1,\ldots,a_n)$ contains $R(1,\ldots,1)=R$ as a subtournament, then $R\in \mathcal{D}_k$.
\end{proof}

\begin{remark}\label{reblowupclass}
	By Theorem \ref{blowupclass}, if there exists a sequence of positive integers $a_1,\ldots,a_n$ such that $R(a_1,\ldots,a_n)\in \mathcal{D}_k$, then $R(b_1,\ldots,b_n)\in \mathcal{D}_k$ for any sequence of positive integers $b_1,\ldots,b_n$.
\end{remark}

\begin{proposition}\label{ninedet}
  Let $ R $ be an $n$-tournament ($n\geq 2$) with vertices $v_1,\cdots,v_n$, $ T_1,\cdots,T_n $ be tournaments. If there exists $ T_i $ such that $ T_i $ is not transitive for some $i$ $(1\leq i\leq n)$, then there exists a subtournament $ R_s $ of $ R(T_1,\cdots,T_n) $ such that $\det(R_s)=9\cdot\det(R)$. 
\end{proposition}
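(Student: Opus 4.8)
The plan is to exhibit $R_s$ explicitly as a blowup of $R$ in which only the non-transitive block is expanded, and then to compute its determinant by a block (Schur complement) argument. Since $T_i$ is not transitive, it contains a $3$-cycle $C$ on three vertices. I would choose one vertex $x_j\in V(T_j)$ for each $j\neq i$ and let $R_s$ be the subtournament of $R(T_1,\dots,T_n)$ induced by $V(C)\cup\{x_j:j\neq i\}$. By Definition \ref{DefBLWOUP}, $R_s$ is precisely the tournament obtained from $R$ by replacing $v_i$ with $C$ and every other vertex by a single vertex; in particular the three vertices of $C$ share exactly the external adjacencies that $v_i$ had in $R$. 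Ordering the vertices of $R_s$ so that the three cycle vertices come last and placing $v_i$ last in $R$, I would write $S_R=\begin{pmatrix} A & c\\ -c^{T} & 0\end{pmatrix}$, where $c\in\{\pm1\}^{n-1}$ records the arcs between $v_i$ and the remaining vertices. Then $S_{R_s}$ is the block matrix whose top-left $(n-1)\times(n-1)$ corner is $A$, whose three cycle columns each carry the same vector $c$ in the $A$-part, and whose bottom-right $3\times3$ block is the skew-adjacency matrix of $C$.

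If $n$ is odd the statement is immediate: $\det(R)=0$ and $R_s$ has $n+2$ (odd) vertices, so $\det(R_s)=0=9\det(R)$. Hence I would assume $n$ is even, in which case $\det(R)\neq0$ and $S_R$ is invertible. Next I would subtract the first cycle column from each of the other two cycle columns, and likewise subtract the first cycle row from the other two cycle rows. These elementary operations preserve the determinant and clear the repeated copies of $c$ (respectively $-c^{T}$), turning $S_{R_s}$ into a block matrix $\begin{pmatrix} S_R & Q\\ U & W\end{pmatrix}$ whose top-left block is exactly $S_R$, whose $2\times2$ block is $W=\begin{pmatrix} 0 & 3\\ -3 & 0\end{pmatrix}$ with $\det(W)=9$, and in which $Q$ and $U$ are supported only on the single row, respectively column, indexed by the first cycle vertex.

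Applying the Schur complement formula then gives $\det(R_s)=\det(S_R)\cdot\det\!\big(W-US_R^{-1}Q\big)$, and because $Q,U$ concentrate on the single index $n$, the correction term $US_R^{-1}Q$ is a scalar multiple of $(S_R^{-1})_{nn}$. The crucial observation is that $(S_R^{-1})_{nn}=0$: since $S_R$ is skew-symmetric and invertible, $S_R^{-1}=(S_R^{T})^{-1}=-(S_R^{-1})^{T}$ is again skew-symmetric and therefore has zero diagonal. Thus $W-US_R^{-1}Q=W$, and I obtain $\det(R_s)=\det(S_R)\cdot\det(W)=9\det(R)$, as required.

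The main obstacle is purely bookkeeping: arranging the row and column operations so that the off-diagonal blocks $Q$ and $U$ land on the single index corresponding to the first cycle vertex, after which the vanishing of the diagonal of $S_R^{-1}$ does all the work. The construction of $R_s$, the reduction of all other blocks to single vertices, and the odd-order case are routine and require no computation beyond the observation that every $3$-cycle has the same skew-adjacency block regardless of which non-transitive $T_i$ it comes from.
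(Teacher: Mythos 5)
Your proof is correct, but it takes a genuinely different route from the paper's. Both arguments build the same witness $R_s$ (the $3$-cycle from the non-transitive block together with one vertex from every other block), and both dispose of the odd-$n$ case by parity. For even $n$, however, the paper switches $R$ so that $v_i$ is dominated by all other vertices, which turns $R_s$ into a join $T'\rightarrow C$, and then invokes the join-determinant formula (Theorem \ref{Djoin}(ii)) together with $\det(D)=9$ for a diamond $D=C^{+}$. You instead work directly on $S_{R_s}$: the column/row subtractions correctly collapse the three repeated copies of $c$, leaving the block matrix $\left(\begin{smallmatrix}S_R & Q\\ U & W\end{smallmatrix}\right)$ with $\det(W)=9$ and $Q,U$ supported on the single index of the first cycle vertex, and the Schur complement correction vanishes because $S_R^{-1}$ is again skew-symmetric (invertibility is guaranteed since $\det(R)$ is the square of an odd integer for even $n$). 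Your route is self-contained linear algebra and avoids both the switching machinery and Theorem \ref{Djoin}; the key observation that the diagonal of the inverse of an invertible skew-symmetric matrix vanishes is exactly what makes the blown-up block decouple, and it would generalize to replacing $v_i$ by other tournaments. The paper's route is shorter given that Theorem \ref{Djoin} is already established, and it keeps the argument at the combinatorial level of joins and switches, which is the language used throughout the rest of the paper.
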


\begin{proof}
	Without loss of generality, we assume that $ T_1 $ is not transitive, then there exist $ w_1, w_2, w_3\in V[T_1]$ such that $T_1[w_1,w_2,w_3]$ is a $3$-cycle. Let $W=\{v_i\in V(R)\hspace{0.15cm}|\hspace{0.15cm}v_1\rightarrow v_i\}$. Then $R$ is switching equivalent to $R'$ with respect to $W$ such that $v_j\rightarrow v_1$ for all $2\leq j\leq n$ in $R'$. Thus $R(T_1,\ldots,T_n)$ and $R'(T_1,\ldots,T_n)$ are switching equivalent, and $V(T_j)\rightarrow V(T_1)$ for all $2\leq j\leq n$ in $R'(T_1,\ldots,T_n)$.
	
	Let $u_j\in V(T_j)$ for $2\leq j\leq n$, $R_s=R(T_1,\ldots,T_n)[w_1,w_2,w_3,u_2,\ldots,u_n]$ and $R_s'=R'(T_1,\ldots,T_n)[w_1,w_2,w_3,u_2,\ldots,u_n]=R'(T_1,\ldots,T_n)[u_2,\ldots,u_n]\rightarrow R'(T_1,\ldots,T_n)[w_1,w_2,\\w_3]$. Then $R_s$ and $R'_s$ are switching equivalent, and thus $\det(R_s)=\det(R'_s)$ by Proposition \ref{Minors}.
	
	When $n$ is odd, we have $\det(R)=0$ and $\det(R_s)=\det(R_s')=0$ since $n+2$ is odd, then $\det(R_s)=9\cdot\det(R)$.
	
	When $n$ is even, then $n-1$ and $3$ are odd, thus by (ii) of Theorem \ref{Djoin} we have $\det(R_s)=\det(R_s')=\det(R'(T_1,\ldots,T_n)[w_1,w_2,w_3]^{+})\cdot\det(R'(T_1,\ldots,T_n)[u_2,\ldots,u_n]^{+})$. Noting that $R'(T_1,\ldots,T_n)[w_1,w_2,w_3]^{+}$ is a diamond, we have $\det(R'(T_1,\ldots,T_n)[w_1,w_2,w_3]^{+})=9$, and thus $\det(R_s)=9\cdot\det(R'(T_1,\ldots,T_n)[u_2,\ldots,u_n]^{+})=9\cdot \det(R')=9\cdot\det(R)$.
	
	We complete the proof.	
\end{proof}

\begin{remark}\label{reninedet}
	Let $k\geq 3$ be an odd integer, $R$ be an $n$-tournament with vertices $v_1,v_2,\ldots,v_n$ such that $R\in \mathcal{D}_k\backslash \mathcal{D}_{k-2}$ and $\det(R)=k^2$. By using Proposition \ref{blowup} and Proposition \ref{ninedet}, it is not difficult to see that $R(T_1,\ldots,T_n)\in \mathcal{D}_k\backslash \mathcal{D}_{k-2}$ if and only if $T_i$ is transitive for every $i\in\{1,\ldots,n\}$, or equivalently, $R(T_1,\ldots,T_n)$ is a transitive blowup of $R$.
\end{remark}

 Let $T$ be an $n$-tournament, $u_1,u_2\in V(T)$ such that $u_1\neq u_2$. We say $u_1$ and $u_2$ are \textit{covertices} in $T$ if for any $v\in V(T)\backslash\{u_1,u_2\}$, $u_1\rightarrow v$ if and only if $u_2\rightarrow v$; we say $u_1$ and $u_2$ are \textit{revertices} in $T$ if for any $v\in V(T)\backslash\{u_1,u_2\}$, $u_1\rightarrow v$ if and only if $v\rightarrow u_2$. The following proposition shows an important property on $L_6$, which is crucial for the characterization of $\mathcal{D}_5\backslash \mathcal{D}_3$.

\begin{proposition}\label{crforL6}
  Let $T$ be a $7$-tournament with vertices $v_1,v_2,v_3,v_4,v_5,v_6,v_7$ such that $T[v_1,v_2,v_3,v_4,v_5,v_6]$ is $L_6$. Then $T\in \mathcal{D}_5\backslash \mathcal{D}_3$ if and only if there exists $i$ $(\leq6)$ such that $v_7$ and $v_i$ are covertices or revertices in $T$.
\end{proposition}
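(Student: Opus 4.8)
The plan is to reduce membership in $\mathcal{D}_5$ to a local check on a few $6$-subtournaments, and then treat the two directions separately. Since $T$ contains $L_6$ with $\det(L_6)=25>9$, we have $T\notin\mathcal{D}_3$, so $T\in\mathcal{D}_5\setminus\mathcal{D}_3$ is equivalent to $T\in\mathcal{D}_5$. As odd-order subtournaments have determinant $0$ and every $4$-subtournament has determinant at most $9$, only the $6$-subtournaments $T-v_i$ can violate $\mathcal{D}_5$, and by Proposition \ref{sixdd} this happens for $T-v_i$ exactly when $\delta_{T-v_i}=6$. Now $T-v_7=L_6$ is fine, and $T-v_6=T[\{v_1,\dots,v_5,v_7\}]$ contains the transitive $5$-set $X=\{v_1,\dots,v_5\}$, one of whose induced $5$-subtournaments is diamond-free; by the equality analysis in the proof of Lemma \ref{bounddia} this forces $\delta_{T-v_6}\ne6$. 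Hence $T\in\mathcal{D}_5$ if and only if $\delta_{T-v_j}\le5$ for each $j\in\{1,\dots,5\}$, where $T-v_j$ is built from the transitive $4$-set $X\setminus\{v_j\}$ together with $v_6$ and $v_7$.

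For the direction from right to left, suppose first that $v_7$ and $v_i$ are covertices. Then $\{v_i,v_7\}$ induces a (necessarily transitive) $2$-tournament with identical relations to all other vertices, so $T$ is precisely the transitive blowup of $L_6=T[\{v_1,\dots,v_6\}]$ that doubles $v_i$; since $L_6\in\mathcal{D}_5$, Theorem \ref{blowupclass} gives $T\in\mathcal{D}_5$, while $T\supseteq L_6$ gives $T\notin\mathcal{D}_3$. If instead $v_7$ and $v_i$ are revertices, I would switch $T$ at $W=\{v_i\}$: this reverses every arc incident to $v_i$, turning $v_7$ and $v_i$ into covertices, and replaces $T[\{v_1,\dots,v_6\}]$ by a switch of $L_6$, which is still switching-equivalent to $L_6$ and hence in $\mathcal{D}_5$. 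Applying the covertex argument to this switch, and using that $\mathcal{D}_5$ is closed under switching, yields $T\in\mathcal{D}_5\setminus\mathcal{D}_3$.

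The hard direction is the converse, which I would prove by contraposition: assuming $v_7$ is neither a covertex nor a revertex of any $v_i$, I must produce some $j\le5$ with $\delta_{T-v_j}=6$. I would encode the attachment of $v_7$ by the pair $(\psi_T(v_7,X),\varepsilon)$, where $\varepsilon$ records the arc between $v_7$ and $v_6$. For each $j\le5$ the subtournament $T-v_j$ has exactly the shape treated in Case~2 of Proposition \ref{sixdd}: a transitive $4$-set $Y_j=X\setminus\{v_j\}$ with two extra vertices $v_6,v_7$. Reading $L_6$ off directly, $\psi(v_6,Y_j)$ equals $(-1,1,-1,1),(2,-1,1),(1,-2,1),(1,-1,2),(1,-1,1,-1)$ for $j=1,\dots,5$, lying in the classes $A_4,A_1,A_3,A_2,A_4$. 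The Case~2 analysis then pins down exactly which pairs $(\psi(v_7,Y_j),\varepsilon)$ force $\det(T-v_j)\in\{49,81\}$, so that the requirement $T\in\mathcal{D}_5$ becomes a conjunction of five explicit forbidden-pattern constraints on $v_7$, and the claim to verify is that the attachments surviving all five constraints are precisely the covertex/revertex ones.

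I expect this last enumeration to be the main obstacle. It is a finite but delicate check over the $2^6$ possible neighbourhoods of $v_7$, and there is no shortcut by the number of parts of $\psi_T(v_7,X)$: for instance the global source $v_7\to\{v_1,\dots,v_6\}$ has a single part, yet (because $L_6$ has neither a vertex dominating all others nor one dominated by all others) it is a covertex/revertex of no $v_i$, so it must fall out as a bad configuration. I would organize the enumeration by the shape of $\psi_T(v_7,X)$ and exploit the evident pairing $j\leftrightarrow 6-j$ visible in the $A$-class pattern above, together with the switching normalizations that reduce $A_2,A_3,A_4$ to $A_1$ (as in the proof of Proposition \ref{sixdd}), to cut the casework roughly in half; in each surviving case one then checks directly that $v_7$ coincides with, or reverses, some $v_i$.
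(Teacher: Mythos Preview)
Your reduction is sound and close in spirit to the paper's: the forward direction is correct, and for the converse you correctly isolate the only potentially offending $6$-subtournaments as $T-v_j$ for $1\le j\le5$ (your argument for $T-v_6$ via the transitive $5$-set $X$ is fine). The paper also proves the converse by exhibiting, for each non-covertex/non-revertex attachment of $v_7$, a specific $6$-subtournament with determinant in $\{49,81\}$, using exactly the Case~2 machinery of Proposition~\ref{sixdd}.

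However, your analysis of $L_6$ contains a factual error that would derail the enumeration you plan. You assert that $L_6$ has no vertex dominated by all others; in fact $v_5$ is a global sink of $L_6$: it loses to $v_1,\dots,v_4$ by transitivity and to $v_6$ because $\psi_T(v_6,X)=(1,-1,1,-1,1)$ ends with $+1$. Hence the global source $v_7\to\{v_1,\dots,v_6\}$ \emph{is} a revertex of $v_5$, not a bad configuration as you claim. This invalidates your statement that ``there is no shortcut by the number of parts of $\psi_T(v_7,X)$''.

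In fact the number $t$ of parts is precisely the organizing principle the paper uses, and the split is clean: after normalizing to $\alpha_1>0$ by switching at $\{v_7\}$, one has that for $t\in\{1,2,5\}$ the vertex $v_7$ is \emph{always} a covertex or revertex of some $v_i$ (short direct arguments, using that $v_6$ alternates on $X$), while for $t\in\{3,4\}$ one \emph{always} finds a $6$-subtournament of determinant $49$ or $81$. The $t=4$ case reduces to $t=3$ by a switch moving the last block to the front, and $t=3$ splits into a handful of subcases by the multiset $\{|\alpha_1|,|\alpha_2|,|\alpha_3|\}\in\{\{1,1,3\},\{1,2,2\}\}$ and the position of the exceptional part, most of which reduce by further switches to one or two explicit determinant computations. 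So the casework is considerably smaller and better structured than your $2^6$ estimate; once you correct the $t\in\{1,2\}$ picture, your proposed organization by the shape of $\psi_T(v_7,X)$ becomes exactly the paper's proof.
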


\begin{proof}
	If there exists $i$ $(\leq 6)$ such that $v_7$ and $v_i$ are covertices in $T$, then $T$ is a transitive blowup of $L_6$; if there exists $i(\leq 6)$ such that $v_7$ and $v_i$ are revertices in $T$, then $T$ is switching equivalent to a transitive blowup of $L_6$ with  respect to $W=\{v_7\}$. Since $L_6\in \mathcal{D}_5\backslash\mathcal{D}_3$ by Theorem \ref{resixdd}, we have $T\in \mathcal{D}_5\backslash \mathcal{D}_3$ by Proposition \ref{blowup}.
	
	Conversely, if $T\in \mathcal{D}_5\backslash \mathcal{D}_3$, we will show that there exists $i\leq 6$ such that $v_7$ and $v_i$ are covertices or revertices in $T$. 
	
	Since $T[v_1,v_2,v_3,v_4,v_5,v_6]$ is $L_6$, without loss of generality, we can assume that $T[v_1,v_2,\\v_3,v_4,v_5]$ is transitive with $v_1\rightarrow v_2\rightarrow v_3\rightarrow v_4\rightarrow v_5$ in $T$, and $\psi_T(v_6,X)=(1,-1,1,-1,1)$, where $X=\{v_1,v_2,v_3,v_4,v_5\}$.
	
	Let $\psi_T(v_7,X)=(\alpha_1,\ldots,\alpha_t)$. Then $1\leq t\leq 5$. When $\alpha_1<0$, $T$ is switching equivalent to $T'$ with respect to $\{v_7\}$ such that $\psi_{T'}(v_7,X)=(-\alpha_1,\ldots,-\alpha_t)$. Especially, if $v_i$ and $v_7$ are covertices (or revertices) in $T'$, then $v_i$ and $v_7$ are revertices (or covertices) in $T$. Therefore, we only need to consider the following five cases with $\alpha_1>0$.
	
	\textbf{Case 1}: $t=1,\alpha_1>0$.
	
	It is clear that $\alpha_1=5$. In this case, $v_1$ and $v_7$ are covertices in $T$ if $v_6\rightarrow v_7$, and $v_5$ and $v_7$ are revertices in $T$ if $v_6\leftarrow v_7$.
	
	\textbf{Case 2}: $t=2,\alpha_1>0$.
	
	Let $i=\alpha_1$. It is clear that $v_i\in X(1,\alpha_1)$, $v_{i+1}\in X(2,\alpha_2)$, and there exists $j\in \{i,i+1\}$ such that $v_6\rightarrow \{v_j,v_7\}$ or $v_6\leftarrow \{v_j,v_7\}$. Take $v_k\in \{v_i,v_{i+1}\}\backslash\{v_j\}$, it is obvious that $v_k$ and $v_7$ are revertices in $T$ in this case.	
	
	\textbf{Case 3}: $t=3,\alpha_1>0$.
	
	Clearly, $|\alpha_1\alpha_2\alpha_3|\in\{3,4\}$ by $|\alpha_1|+|\alpha_2|+|\alpha_3|=5$. We consider the following two subcases.
	
	\textbf{Subcase 3.1}: $|\alpha_1\alpha_2\alpha_3|=3$.
	
	In this subcase, there exists $i$ such that $|\alpha_i|=3$ and $|\alpha_j|=1$ for all $j\in\{1,2,3\}\backslash\{i\}$.
	
	\textbf{Subcase 3.1.1}: $|\alpha_1|=3,|\alpha_2|=|\alpha_3|=1$.
	
	If $v_6\rightarrow v_7$, we denote $T_s=T[v_2,v_3,v_4,v_5,v_6,v_7]$, $X_1=\{v_2,v_3,v_4,v_5\}$. Then $T_s$ is switching equivalent to $T_s'$ with respect to $W=\{v_6\}$ such that $T_s'[X_1]$ is transitive, $\psi_{T_s'}(v_6,X_1)=(1,-1,1,-1)$, $\psi_{T_s'}(v_7,X_1)=(2,-1,1)$ and $v_7\rightarrow v_6$ in $T_s'$. By Subcase 2.4 of the proof of Proposition \ref{sixdd}, we have $\det(T_s)=\det(T_s')=49$, then $T\notin \mathcal{D}_5$, a contradiction.
	
	If $v_6\leftarrow v_7$, we denote $T_s=T[v_1,v_2,v_4,v_5,v_6,v_7]$, $X_2=\{v_1,v_2,v_4,v_5\}$. Then $T_s[X_2]$ is transitive, $\psi_{T_s}(v_6,X_2)=(1,-2,1)$ and $\psi_{T_s}(v_7,X_2)=(2,-1,1)$. By Subcase 2.3 of the proof of Proposition \ref{sixdd}, we have $\det(T_s)=49$, then $T\notin \mathcal{D}_5$, a contradiction.
	
	\textbf{Subcase 3.1.2}: $|\alpha_2|=3,|\alpha_1|=|\alpha_3|=1$.
	
	$T$ is switching equivalent to $T'$ with respect to $W=\{v_1,v_6,v_7\}$ such that $T'[V(T)\backslash \{v_7\}]$ is $L_6$, $T'[X]$ is transitive with $v_2\rightarrow v_3\rightarrow v_4\rightarrow v_5\rightarrow v_1$ in $T'$, $\psi_{T'}(v_6,X)=(1,-1,1,-1,1)$, $\psi_{T'}(v_7,X)=(3,-1,1)$. Then $T'$ satisfies the conditions of Subcase 3.1.1, by the similar proof of Subcase 3.1.1, we have $T\notin \mathcal{D}_5$, a contradiction.
	
	\textbf{Subcase 3.1.3}: $|\alpha_3|=3,|\alpha_1|=|\alpha_2|=1$.
	
	$T$ is switching equivalent to $T''$ with respect to $W=\{v_3,v_4,v_5,v_6,v_7\}$ such that $T''[V(T)\backslash \{v_7\}]$ is $L_6$, $T''[X]$ is transitive with $v_3\rightarrow v_4\rightarrow v_5\rightarrow v_1\rightarrow v_2$ in $T''$, $\psi_{T''}(v_6,X)=(1,-1,1,-1,1)$ and $\psi_{T''}(v_7,X)=(3,-1,1)$. Then similar to Subcase 3.1.2, we have $T\notin \mathcal{D}_5$, a contradiction.
	
	\textbf{Subcase 3.2}: $|\alpha_1\alpha_2\alpha_3|=4$.
	
	In this subcase, there exists $i$ such that $|\alpha_i|=1$ and $|\alpha_j|=2$ for all $j\in\{1,2,3\}\backslash\{i\}$.
	
	\textbf{Subcase 3.2.1}: $|\alpha_3|=1,|\alpha_1|=|\alpha_2|=2$.
	
	Let $T_s=T[v_2,v_3,v_4,v_5,v_6,v_7]$, $X_1=\{v_2,v_3,v_4,v_5\}$. $T_s$ is switching equivalent to $T_s'$ with respect to $W=\{v_5,v_6,v_7\}$ such that $T_s'[X_1]$ is transitive with $v_5\rightarrow v_2\rightarrow v_3\rightarrow v_4$ in $T_s'$, $\psi_{T_s'}(v_6,X_1)=(2,-1,1)$ and $\psi_{T_s'}(v_7,X_1)=(1,-1,2)$. By Subcase 2.2 of the proof of Proposition \ref{sixdd}, we have $\det(T_s')\in\{49,81\}$, then $T\notin \mathcal{D}_5$, a contradiction.
	
	\textbf{Subcase 3.2.2}: $|\alpha_1|=1,|\alpha_2|=|\alpha_3|=2$ or $|\alpha_2|=1,|\alpha_1|=|\alpha_3|=2$.
	
	Let $W=\{v_1,v_6,v_7\}$ if $|\alpha_1|=1$, $W=\{v_4,v_5,v_7\}$ if $|\alpha_2|=1$. $T$ is switching equivalent to $T'$ with respect to $W$ such that $T'[X]$ is transitive($v_2\rightarrow v_3\rightarrow v_4\rightarrow v_5\rightarrow v_1$ in $T'$ if $|\alpha_1|=1$, $v_4\rightarrow v_5\rightarrow v_1\rightarrow v_2\rightarrow v_3$ in $T'$ if $|\alpha_2|=1$), $\psi_{T'}(v_6,X)=(1,-1,1,-1,1)$, $\psi_{T'}(v_7,X)=(2,-2,1)$. By the similar proof of Subcase 3.2.1, there is a subtournament $T'_s$ of $T'$ such that $\det(T'_s)\in \{49,81\}$, then $T'\notin \mathcal{D}_5$ and thus $T\notin \mathcal{D}_5$, a contradiction. 
	
	By Subcase 3.1 and Subcase 3.2, $t=3$ is contradictory to $T\in \mathcal{D}_5\backslash \mathcal{D}_3$.
	
	\textbf{Case 4}: $t=4,\alpha_1>0$.
	
	In this case, there exists $i$ such that $|\alpha_i|=2$ and $|\alpha_j|=1$ for all $j\in\{1,2,3,4\}\backslash\{i\}$. Let $X(i,\alpha_i)(i=1,2,3,4)$ be the corresponding vertex sets in $\psi_T(v_7,X)=(\alpha_1,\alpha_2,\alpha_3,\alpha_4)$, $W=\{v_1,v_2,v_7\}$ if $\alpha_1=2$ and $W=\{v_1,v_6,v_7\}$ if $\alpha_1=1$. Then $T$ is switching equivalent to $T'$ with respect to $W$ such that $T'[X]$ is transitive with $X(2,\alpha_2)\rightarrow X(3,\alpha_3)\rightarrow X(4,\alpha_4)\rightarrow X(1,\alpha_1)$ in $T'$. Moreover, $\psi_{T'}(v_6,X)=(1,-1,1,-1,1)$ and $\psi_{T'}(v_7,X)\in\{(1,-1,3),(1,-2,2),(2,-1,2)\}$. Then by the similar proof of Case 3, there is a subtournament $T'_s$ of $T'$ such that $\det(T'_s)\in \{49,81\}$, we have $T'\notin \mathcal{D}_5$, then $T\notin \mathcal{D}_5$, a contradiction.
	
	\textbf{Case 5}: $t=5,\alpha_1>0$.
	
	In this case, it is easy to see that $v_6$ and $v_7$ are covertices in $T$.
	
	Combining the above arguments, there exists $i\leq6$ such that $v_7$ and $v_i$ are covertices or revertices in $T$ if $T\in \mathcal{D}_5\backslash \mathcal{D}_3$. Hence we complete the proof.
\end{proof}

By the proof of Proposition \ref{crforL6}, we obtain the following corollaries, which follows notation in Proposition \ref{crforL6}.
\begin{corollary}\label{recrforL6}
	Let $T$ be a $7$-tournament with vertices $v_1,v_2,v_3,v_4,v_5,v_6,v_7$ such that $T[v_1,v_2,v_3,v_4,v_5,v_6]$ is $L_6$, $T[X]$ is transitive, $\psi_T(v_6,X)=(1,-1,1,-1,1)$ and $\psi_T(v_7,X)=(\alpha_1,\ldots,\alpha_t)$, where $X=\{v_1,\ldots,v_5\}$. Then we have
	
	\item{\rm(i)} if $t\in \{1,2,5\}$, then there exists $v_i\text{ }(1\leq i\leq 6)$ such that $v_7$ and $v_i$ are covertices or revertices in $T$;
	\item{\rm(ii)} if $t\in \{3,4\}$, then $T\notin \mathcal{D}_5$.
\end{corollary}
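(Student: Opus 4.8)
The plan is to obtain both parts directly from the case analysis already carried out in the proof of Proposition \ref{crforL6}, which splits precisely according to the value of $t$ in $\psi_T(v_7,X)=(\alpha_1,\ldots,\alpha_t)$ and records, in each case, either an explicit pair of covertices/revertices or a $6$-subtournament whose determinant exceeds $9$. Under the corollary's hypotheses we are already in the normalized situation assumed there (namely $T[X]$ transitive with $v_1\to\cdots\to v_5$ and $\psi_T(v_6,X)=(1,-1,1,-1,1)$), so no preliminary relabeling is needed.

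First I would isolate the sign-reduction used in that proof: when $\alpha_1<0$, switching $T$ with respect to $W=\{v_7\}$ yields a tournament $T'$ with $\psi_{T'}(v_7,X)=(-\alpha_1,\ldots,-\alpha_t)$, so that $\alpha_1>0$ in $T'$ while the length $t$ is unchanged. I would verify that this switch preserves all hypotheses: it reverses only the arcs incident to $v_7$, hence $T'[v_1,\ldots,v_6]=T[v_1,\ldots,v_6]$ is still $L_6$, $T'[X]$ is still transitive, and $\psi_{T'}(v_6,X)=(1,-1,1,-1,1)$ is unaffected since it involves no arc at $v_7$. Moreover the switch interchanges the two relations---covertices of $v_7$ in $T'$ are revertices in $T$ and conversely---and, by Proposition \ref{Minors} together with the switching-invariance of $\mathcal{D}_5$, it preserves both determinants and membership in $\mathcal{D}_5$. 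Hence it suffices to treat $\alpha_1>0$.

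For part (i), Cases $1$, $2$ and $5$ of the proof of Proposition \ref{crforL6} (with $\alpha_1>0$) each exhibit an index $i\le 6$ for which $v_7$ and $v_i$ are covertices or revertices; transporting these back through the $\{v_7\}$-switch when $\alpha_1<0$ (which merely swaps the two labels) gives the same conclusion. For part (ii), Cases $3$ and $4$ produce a $6$-subtournament $T_s$ with $\det(T_s)\in\{49,81\}$, so $T\notin\mathcal{D}_5$ when $\alpha_1>0$; when $\alpha_1<0$, the switch-equivalent $T'$ contains such a subtournament, whence $T\notin\mathcal{D}_5$ by switching-invariance.

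The only step needing genuine care---as opposed to the determinant evaluations, which are already carried out in Subcases $2.2$--$2.4$ of Proposition \ref{sixdd} and invoked inside Proposition \ref{crforL6}---is checking that the $\{v_7\}$-switch really does fix the $L_6$ structure on $\{v_1,\ldots,v_6\}$ and the value of $\psi_T(v_6,X)$, so that the reduction to $\alpha_1>0$ is legitimate and the covertex/revertex labels transform exactly as stated. Beyond that, the corollary is a direct transcription of the existing case split.
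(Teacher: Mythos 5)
Your proposal is correct and follows exactly the route the paper intends: the paper states this corollary without a separate proof, asserting it "by the proof of Proposition \ref{crforL6}", whose case split on $t$ (after the $\{v_7\}$-switch reduction to $\alpha_1>0$) yields covertices/revertices in Cases 1, 2, 5 and a $6$-subtournament of determinant $49$ or $81$ in Cases 3, 4. Your added verification that the $\{v_7\}$-switch fixes the $L_6$ structure on $\{v_1,\ldots,v_6\}$ and merely interchanges the covertex and revertex relations is precisely the point the paper leaves implicit, and it is checked correctly.
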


\begin{corollary}\label{cronlyone}
	Let $T$ be a $7$-tournament with vertices $v_1,v_2,v_3,v_4,v_5,v_6,v_7$ such that $T[v_1,v_2,v_3,v_4,v_5,v_6]$ is $L_6$. If $v_i(i\leq 6)$ and $v_7$ are covertices or revertices in $T$, then for any $j\in \{1,2,\ldots,6\}\backslash\{i\}$, $v_j$ and $v_7$ are not covertices or revertices in $T$. 
\end{corollary}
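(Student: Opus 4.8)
The plan is to argue by contradiction. Suppose there are two distinct indices $i,j\in\{1,\ldots,6\}$ such that $v_7$ forms a covertex or revertex pair with \emph{each} of $v_i$ and $v_j$. I would first reduce the claim to a statement purely about $L_6=T[v_1,\ldots,v_6]$, namely that $L_6$ contains no pair of covertices and no pair of revertices, and then dispose of that statement using $\det(L_6)=25$.

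For the reduction, set $S=\{v_1,\ldots,v_6\}\backslash\{v_i,v_j\}$, a set of four vertices none of which is $v_7$, $v_i$ or $v_j$. Each of the two hypotheses is, for every $v\in S$, a biconditional relating the arc at $v_7$ to the arc at $v_i$ (respectively $v_j$): a covertex pair gives ``$v_7\rightarrow v\Leftrightarrow v_i\rightarrow v$'' and a revertex pair gives ``$v_7\rightarrow v\Leftrightarrow v\rightarrow v_i$''. Chaining the two biconditionals through the common term $v_7\rightarrow v$ eliminates $v_7$ and yields, for every $v\in S$, a biconditional relating the arcs at $v_i$ and $v_j$. A short case analysis over the four possibilities shows that the two pure cases (covertex--covertex and revertex--revertex) make $v_i,v_j$ covertices in $L_6$, while the two mixed cases make $v_i,v_j$ revertices in $L_6$; here one uses that the revertex relation is symmetric, which is exactly what collapses the two mixed cases to one. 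Since $S$ is precisely the set of remaining vertices of $L_6$, in every case $v_i$ and $v_j$ form a covertex or revertex pair \emph{inside} $L_6$.

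It therefore suffices to show that $L_6$ has no pair of covertices or revertices. Suppose $v_a,v_b$ were covertices in $L_6$ with, say, $v_a\rightarrow v_b$. Then the block $\{v_a,v_b\}$ is a transitive $2$-tournament having the same relations to all outside vertices, so $L_6$ is a transitive blowup $R(1,1,1,1,2)$ of the $5$-tournament $R$ obtained by contracting $\{v_a,v_b\}$ to a single vertex. By Proposition \ref{blowup}, $\det(L_6)=\det(R[U])$, where $U$ is the set of the four vertices of $R$ coming from the size-one blocks; but $R[U]$ is a $4$-tournament, so its determinant is $1$ or $9$, contradicting $\det(L_6)=25$ (Theorem \ref{resixdd}). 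If instead $v_a,v_b$ were revertices, then switching $L_6$ with respect to $W=\{v_b\}$ turns them into covertices while, by Proposition \ref{Minors}, leaving the determinant equal to $25$; applying the previous argument to this switch gives the same contradiction. This settles the statement about $L_6$, and the reduction then gives the corollary.

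The step I expect to require the most care is the reduction, specifically keeping the orientation of each arc straight when chaining the biconditionals and confirming that the mixed cases genuinely produce the revertex (rather than covertex) relation. Once the blowup structure of a covertex pair is recognized, the determinant step is essentially immediate, since a $6$-tournament with a covertex pair necessarily has its determinant equal to that of an induced $4$-tournament, whose value can only be $1$ or $9$.
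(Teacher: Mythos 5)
Your proof is correct and follows essentially the same route as the paper: both reduce the two hypothesized covertex/revertex pairs with $v_7$ to a covertex (or revertex) pair between $v_i$ and $v_j$ inside $L_6$, and then contradict $\det(L_6)=25$ via the transitive blowup $R(1,1,1,1,2)$ and Proposition \ref{blowup}. The only difference is cosmetic: the paper switches $T$ at a subset of $\{v_i,v_j\}$ up front to normalize both pairs to covertices, whereas you chain the biconditionals case by case and switch only at the end.
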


\begin{proof}
	We assume that there exists $j\in \{1,2,\ldots,6\}\backslash\{i\}$ such that $v_j$ and $v_7$ are covertices or revertices in $T$. 
	
	Let $W$ be a subset of $\{v_i,v_j\}$ such that $v\in W$ if and only if $v$ and $v_7$ are revertices in $T$. Then $T$ is switching equivalent to $T'$ with respect to $W$ such that $v_i$ and $v_7$ are covertices in $T'$, $v_j$ and $v_7$ are covertices in $T'$.
	
	Then $v_i$ and $v_j$ are covertices in $T'[Y]$, where $Y=\{v_1,v_2,v_3,v_4,v_5,v_6\}$, and thus there exists a $5$-tournament $R$ such that $T'[Y]$ is $R(1,1,1,1,2)$. By Proposition \ref{blowup}, $\det(T'[Y])=\det(R(1,1,1,1,2))\leq 9< 25$, which contradicts $\det(T'[Y])=\det(T[Y])=\det(L_6)=25$.
\end{proof}

Let $T$ be a tournament, $\{v_1,v_2\}\in V(T)$. We write $\theta_T(v_1,v_2)=1$ if $v_1\rightarrow v_2$ in $T$, and write $\theta_T(v_1,v_2)=-1$ otherwise.

\begin{proposition}\label{mustcoL6}
	Let $ T $ be an $8$-tournament with $V(T)=\{v_1,v_2,v_3,v_4,v_5,v_6,u_1,u_2\}$, $X=\{v_1,v_2,v_3,v_4,v_5\}$ such that $T[v_1,v_2,v_3,v_4,v_5,v_6]$ is $L_6$, where $T[X]$ is transitive with $v_1\rightarrow v_2\rightarrow v_3\rightarrow v_4\rightarrow v_5$ in $T$, $\psi_T(v_6,X)=(1,-1,1,-1,1)$, $u_1$ and $v_i$ be covertices in $T[V(T)\backslash\{u_2\}]$ for some $i\text{ }(1\leq i\leq 6)$, $u_2$ and $v_j$ be covertices in $T[V(T)\backslash\{u_1\}]$ for some $j\in\{1,2,3,4,5,6\}\backslash\{i\}$. Then $T\in \mathcal{D}_5\backslash \mathcal{D}_3$ if and only if $\theta_T(u_1,u_2)\cdot\theta_T(v_i,v_j)=1$.
\end{proposition}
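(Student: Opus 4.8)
The plan is to note first that, whatever the sign of $\theta_T(u_1,u_2)\cdot\theta_T(v_i,v_j)$, the subtournament $T[v_1,v_2,v_3,v_4,v_5,v_6]$ is $L_6$, so it has determinant $25>9$ and hence $T\notin\mathcal{D}_3$ in every case. The statement therefore reduces to showing that $T\in\mathcal{D}_5$ if and only if $\theta_T(u_1,u_2)\cdot\theta_T(v_i,v_j)=1$.

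For the ``if'' direction I would exhibit $T$ as a transitive blowup of $L_6$. Group the vertices into the two parts $P_1=\{v_i,u_1\}$ and $P_2=\{v_j,u_2\}$ together with the four singletons among $\{v_1,\dots,v_6\}\backslash\{v_i,v_j\}$. Each $P_\ell$ is a $2$-tournament, hence transitive, so it remains only to check that the arcs between any two parts are uniform and agree with the corresponding arc of $L_6$. Between a part and a singleton $v_m$ this is immediate from the covertex hypotheses ($u_1\rightarrow v_m\Leftrightarrow v_i\rightarrow v_m$ and $u_2\rightarrow v_m\Leftrightarrow v_j\rightarrow v_m$). Between $P_1$ and $P_2$ the three arcs $v_iv_j$, $v_iu_2$ and $u_1v_j$ all point the same way as $v_iv_j$, again by the covertex hypotheses; the only remaining arc is $u_1u_2$, and it points the same way precisely when $\theta_T(u_1,u_2)=\theta_T(v_i,v_j)$, that is, when the product equals $1$. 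Under this hypothesis $T$ is thus a transitive blowup of $L_6$ with two parts of size $2$, so Theorem \ref{resixdd} ($L_6\in\mathcal{D}_5$) together with Theorem \ref{blowupclass} gives $T\in\mathcal{D}_5$, and combined with $T\notin\mathcal{D}_3$ we get $T\in\mathcal{D}_5\backslash\mathcal{D}_3$.

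For the ``only if'' direction I would argue the contrapositive by showing that if the product equals $-1$ then $\det(T)\ge 49$, so that $T$ is itself a subtournament of determinant exceeding $25$ and $T\notin\mathcal{D}_5$. Let $T^{*}$ be obtained from $T$ by reversing the single arc between $u_1$ and $u_2$; the covertex hypotheses do not involve this arc, so they persist, while $\theta_{T^{*}}(u_1,u_2)=-\theta_T(u_1,u_2)$, giving product $1$ for $T^{*}$. By the ``if'' direction $T^{*}$ is a transitive blowup of $L_6$ in which $P_1,P_2$ have the even size $2$, so by Proposition \ref{blowup} $\det(T^{*})=\det\big(L_6[\{v_1,\dots,v_6\}\backslash\{v_i,v_j\}]\big)\in\{1,9\}$ and hence $\mathrm{Pf}(S_{T^{*}})\in\{\pm1,\pm3\}$. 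Now the Pfaffian of a skew-symmetric matrix is an affine function of any single super-diagonal entry, the coefficient being (up to sign) the Pfaffian of the matrix obtained by deleting the two corresponding rows and columns. Deleting the rows and columns of $u_1$ and $u_2$ from $S_T$ (equivalently from $S_{T^{*}}$) leaves exactly $S_{L_6}$, whose Pfaffian is $\pm5$ because $\det(L_6)=25$. Reversing the arc $u_1u_2$ changes that entry by $\pm2$, so $\mathrm{Pf}(S_T)=\mathrm{Pf}(S_{T^{*}})\pm10\in\{\pm1\pm10,\ \pm3\pm10\}$, whence $\det(T)=\mathrm{Pf}(S_T)^2\in\{49,81,121,169\}$. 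In particular $\det(T)\ge 49>25$, so $T\notin\mathcal{D}_5$.

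The routine-but-necessary work is the verification in the blowup step that the three cross arcs $v_iv_j,\ v_iu_2,\ u_1v_j$ are forced by the covertex hypotheses for every relative position of $i$ and $j$, including the cases $i=6$ or $j=6$ where one of the two blown-up vertices plays the role of the alternating vertex of $L_6$. The conceptual crux is the reverse direction: recognizing that flipping the one undetermined arc turns $T$ into a genuine transitive blowup of $L_6$, and then controlling $\det(T)$ through the single-entry (first order) expansion of the Pfaffian, whose cofactor is precisely $\mathrm{Pf}(S_{L_6})=\pm5$. This produces the jump from $\{1,9\}$ to $\{49,81,121,169\}$ in one stroke and, crucially, avoids any case analysis over the two internal arcs $u_1v_i$ and $u_2v_j$ (which are not determined by the hypotheses); a less efficient alternative would be to locate a forbidden $6$-subtournament matching one of the determinant-$49$ or $81$ patterns of Proposition \ref{sixdd}, but the choice of witness then depends on $i,j$ and on those free arcs, so I would prefer the Pfaffian argument.
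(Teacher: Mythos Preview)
Your proof is correct and, for the ``only if'' direction, genuinely different from the paper's. Both arguments handle the ``if'' direction the same way, by exhibiting $T$ as a transitive blowup of $L_6$ with two parts of size $2$. For the converse, however, the paper proceeds by an exhaustive case analysis on the pair $(i,j)$: reducing by switching symmetry to $i=1$, it treats $j\in\{2,3,4,5,6\}$ separately, in each case locating an explicit $6$- or $7$-subtournament (or, in one subcase, the full $8$-tournament, which turns out to be $L_8$) whose determinant is $49$ or $81$, invoking Proposition~\ref{sixdd} and Corollary~\ref{recrforL6} along the way. Your Pfaffian argument replaces all of this with a single computation: flipping the one undetermined arc yields a transitive blowup $T^{*}$ with $\det(T^{*})=\det(L_6[\{v_1,\dots,v_6\}\setminus\{v_i,v_j\}])\in\{1,9\}$, and the Pfaffian cofactor of that arc is $\pm\mathrm{Pf}(S_{L_6})=\pm5$, forcing $|\mathrm{Pf}(S_T)|\ge 10-3=7$ and hence $\det(T)\ge 49$. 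This is shorter, uniform in $(i,j)$, and insensitive to the two free internal arcs $u_1v_i$ and $u_2v_j$. The paper's route, by contrast, is more elementary (no Pfaffian expansion needed) and produces concrete small witnesses, which it then reuses in the proof of Theorem~\ref{D5D3character}; your argument only certifies that $T$ itself has large determinant, but that is all the proposition requires.
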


\begin{proof}
	If $\theta_T(u_1,u_2)\cdot\theta_T(v_i,v_j)=1$, then $u_1\rightarrow u_2$ if and only if $v_i\rightarrow v_j$, thus $u_1$ and $v_i$ are covertices in $T$, $u_2$ and $v_j$ are covertices in $T$ since $u_1$ and $v_i$ are covertices in $T[V(T)\backslash\{u_2\}]$, $u_2$ and $v_j$ are covertices in $T[V(T)\backslash\{u_1\}]$. Therefore, $T$ is a transitive blowup of $L_6$, and $T\in \mathcal{D}_5\backslash \mathcal{D}_3$ by Proposition \ref{blowup} and Theorem 2.11. 
	
	Conversely, if $T\in \mathcal{D}_5\backslash \mathcal{D}_3$, we will show $\theta_T(u_1,u_2)\cdot\theta_T(v_i,v_j)=1$. Without loss of generality, we assume that $i<j$, then we have $i\in\{1,2,3,4,5\}$ and $j\in \{2,3,4,5,6\}$.
	
	\textbf{Case 1}: $i=1,j=2$.
	
	Clearly, $\theta_T(v_1,v_2)=1$ by $T[X]$ is transitive with $v_1\rightarrow v_2\rightarrow v_3\rightarrow v_4\rightarrow v_5$ in $T$. We assume that $\theta_T(u_1,u_2)=-1$, now we show that there exists a contradiction.
	
	\textbf{Subcase 1.1}: $u_1\rightarrow v_1$.
	
	Let $T_1=T[V(T)\backslash\{v_2\}]$, $X_1=\{v_1,u_2,v_3,v_4,v_5\}$. Since $u_1$ and $v_1$ are covertices in $T[V(T)\backslash\{u_2\}]$, $u_2$ and $v_2$ are covertices in $T[V(T)\backslash\{u_1\}]$, and $T[v_1,v_2,v_3,v_4,v_5,v_6]$ is $L_6$, it is clear that $T_1[V(T_1)\backslash\{u_1\}]$ is $L_6$, $T_1[X_1]$ is transitive with $v_1\rightarrow u_2\rightarrow v_3\rightarrow v_4\rightarrow v_5$ in $T_1$, $\psi_{T_1}(v_6,X_1)=(1,-1,1,-1,1)$ and $\psi_{T_1}(u_1,X_1)=(1,-1,3)$. By Proposition \ref{crforL6} and Corollary \ref{recrforL6}, we have $T_1\notin \mathcal{D}_5$, and thus $T\notin \mathcal{D}_5$, a contradiction.
	
	\textbf{Subcase 1.2}: $u_1\leftarrow v_1$.
	
	\textbf{Subcase 1.2.1}: $v_2\rightarrow u_2$.
	
	Let $T_2=T[V(T)\backslash\{v_1\}]$, $X_2=\{u_1,v_2,v_3,v_4,v_5\}$. It is clear that $T_2[V(T_2)\backslash \{u_2\}]$ is $L_6$, $T_2[X_2]$ is transitive with $u_1\rightarrow v_2\rightarrow v_3\rightarrow v_4\rightarrow v_5$ in $T_2$, $\psi_{T_2}(v_6,X_2)=(1,-1,1,-1,1)$, $\psi_{T_2}(u_2,X_2)=(1,-1,3)$. Similarly, by Proposition \ref{crforL6} and Corollary \ref{recrforL6}, $T_2\notin \mathcal{D}_5$, and thus $T\notin \mathcal{D}_5$, a contradiction. 
	
	\textbf{Subcase 1.2.2}: $v_2\leftarrow u_2$. 
	
	Let $X_3=\{v_1,v_2,v_3,v_4,v_5,u_1,u_2\}$. Then $T[X_3]$ is transitive with $v_1\rightarrow u_2\rightarrow u_1\rightarrow v_2\rightarrow v_3\rightarrow v_4\rightarrow v_5$ in $T$, $\psi_T(v_6,X_3)=(1,-1,1,-1,1,-1,1)$, thus $T$ is $L_8$. Therefore, according to the ordering $v_1,u_2,u_1,v_2,v_3,v_4,v_5,v_6$, we have	
	$$ \det(T)=\left|\begin{array}{rrrrrrrr}
	0 & 1 & 1 & 1 & 1 & 1 & 1 & -1 \\
	-1 & 0 & 1 & 1 & 1 & 1 & 1 & 1 \\
	-1 & -1 & 0 & 1 & 1 & 1 & 1 & -1\\
	-1 & -1 & -1 & 0 & 1 & 1 & 1 & 1 \\
	-1 & -1 & -1 & -1 & 0 & 1 & 1 & -1 \\
	-1 & -1 & -1 & -1 & -1 & 0 & 1 & 1 \\
	-1 & -1 & -1 & -1 & -1 & -1 & 0 & -1 \\
	1 & -1 & 1 & -1 & 1 & -1 & 1 & 0\\
	\end{array}\right|. $$		
	By direct calculation, $\det(T)=49$. Hence $T\notin\mathcal{D}_5$, a contradiction.
	
	Combining the above arguments, we have $\theta_T(u_1,u_2)\neq -1$, then $\theta_T(u_1,u_2)\cdot\theta_T(v_i,v_j)=1$ for $i=1$ and $j=2$.
	
	\textbf{Case 2}: $i=1,j\in\{3,4\}$.
	
	Clearly, we have $\theta_T(v_1,v_j)=1$ for $j\in\{3,4\}$, and we assume that $\theta_T(u_1,u_2)=-1$.
	
	Let $T_2,X_2$ be defined as in Subcase 1.2.1. Then $\psi_{T_2}(u_2,X_2)\in\{(1,-2,2),(1,-1,3),(1,-3,1)\}$, thus $T_2\notin \mathcal{D}_5$ by Proposition \ref{crforL6} and Corollary \ref{recrforL6}, a contradiction.
	
	\textbf{Case 3}: $i=1,j=5$.
	
	Clearly, $\theta_T(v_1,v_5)=1$, and we assume that $\theta_T(u_1,u_2)=-1$.
	
	Let $W=\{u_2,v_5,v_6\}$. Then $T$ is switching equivalent to $T'$ with respect to $W$ such that $T'[v_1,v_2,v_3,v_4,v_5]$ is transitive with $v_5\rightarrow v_1\rightarrow v_2\rightarrow v_3\rightarrow v_4$ in $T'$.
	
	Let $v_1'=v_5$, $v_i'=v_{i-1}(2\leq i\leq5)$, $v_6'=v_6$, $u_1'=u_2$, $u_2'=u_1$. Then $T'[v_1',v_2',v_3',v_4',v_5',v_6']$ is $L_6$, $T'[v_1',v_2',v_3',v_4',v_5']$ is transitive, $v_1'\rightarrow v_2'\rightarrow v_3'\rightarrow v_4'\rightarrow v_5'$ in $T'$ and $u_1'\leftarrow u_2'$ in $T'$. Moreover, $u_1'$ and $v_1'$ are covertices in $T'[u_1',v_1',v_2',v_3',v_4',v_5',v_6']$, $u_2'$ and $v_2'$ are covertices in $T'[u_2',v_1',v_2',v_3',v_4',v_5',v_6']$.
	
	By Case 1, we have $T'\notin \mathcal{D}_5$, then $T\notin \mathcal{D}_5$ by Proposition \ref{Minors}, a contradiction.
	
	\textbf{Case 4}: $i=1,j=6$.
	
	Clearly, $\theta_T(v_1,v_6)=-1$, and we assume that $\theta_T(u_1,u_2)=1$.
	
	Let $T_2,X_2$ defined as in Subcase 1.2.1. Then $\psi_{T_2}(u_2,X_2)=(-2,1,-1,1)$. Let $T_2'$ be a switch of $T_2$ with respect to $W=\{u_2\}$. Then $\psi_{T_2'}(u_2,X_2)=(2,-1,1,-1)$. Hence $T_2'\notin \mathcal{D}_5$ by Proposition \ref{crforL6} and Corollary \ref{recrforL6}, and $T_2\notin \mathcal{D}_5$, which implies $T\notin \mathcal{D}_5$, a contradiction.
    
    \textbf{Case 5}: $i\in\{2,3,4,5\}$.
    
    Let $W=\{v_1,v_2,\ldots,v_{i-1}\}$ if $i$ is odd, $W=\{v_1,v_2,\ldots,v_{i-1},v_6\}$ if $i$ is even and $j\neq 6$, $W=\{v_1,v_2,\ldots,v_{i-1},v_6,u_2\}$ if $i$ is even and $j=6$. Then $T$ is switching equivalent to $T'$ with respect to $W$ such that $T'[v_1,v_2,\ldots,v_6]$ is $L_6$, $T'[X]$ is transitive with $v_i\rightarrow v_{i+1}\rightarrow \cdots \rightarrow v_5\rightarrow v_1\rightarrow \cdots \rightarrow v_{i-1}$ in $T'$, $\psi_{T'}(v_6,X)=(1,-1,1,-1,1)$. Clearly, $T'\in \mathcal{D}_5\backslash\mathcal{D}_3$ by $T\in \mathcal{D}_5\backslash \mathcal{D}_3$ and Proposition \ref{Minors}. Moreover, if $\theta_{T'}(u_1,u_2)\cdot\theta_{T'}(v_i,v_j)=1$ in $T'$, then $\theta_{T}(u_1,u_2)\cdot\theta_{T}(v_i,v_j)=1$ in $T$ by the construction of $W$. So we only need to show $\theta_{T'}(u_1,u_2)\cdot\theta_{T'}(v_i,v_j)=1$ in $T'$.
    
    Let $v_1'=v_i,v_2'=v_{i+1},\cdots,v_{5-i+1}'=v_5,v_{5-i+2}'=v_1,\cdots,v_5'=v_{i-1},v_6'=v_6$. Then there exists $k>1$ such that $v_k'=v_j$. Let $Z=\{v_1',v_2',v_3',v_4',v_5',v_6'\}$. Then $T'[Z]$ is $L_6$ with $v_1'\rightarrow v_2'\rightarrow v_3'\rightarrow v_4'\rightarrow v_5'$ in $T'[Z]$, $u_1$ and $v_1'$ are covertices in $T'[Z\cup\{u_1\}]$, $u_2$ and $v_k'$ are covertices in $T'[Z\cup\{u_2\}]$. By Case 1 $\sim$ Case 4 and $T'\in \mathcal{D}_5\backslash\mathcal{D}_3$, we have $\theta_{T'}(u_1,u_2)\cdot \theta_{T'}(v'_1,v'_k)=1$, or equivalently, $\theta_{T'}(u_1,u_2)\cdot \theta_{T'}(v_i,v_j)=1$, which implies $\theta_T(u_1,u_2)\cdot \theta_T(v_i,v_j)=1$.

    Combining the above five cases, if $T\in \mathcal{D}_5\backslash \mathcal{D}_3$, then we have $\theta_T(u_1,u_2)\cdot \theta_T(v_i,v_j)=1$, and we complete the proof.
\end{proof}

Now we characterize the set $\mathcal{D}_5\backslash \mathcal{D}_3$.

\begin{theorem}\label{D5D3character}
  Let $ T $ be a tournament. Then $ T\in\mathcal{D}_5 \backslash \mathcal{D}_3 $ if and only if $ T $ is switching equivalent to a transitive blowup of $ L_6 $. 
\end{theorem}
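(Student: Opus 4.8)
The plan is to prove both implications, the reverse one being routine and the forward one carrying all the content.

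\textbf{The easy direction.} Suppose $T$ is switching equivalent to a transitive blowup $L_6(a_1,\ldots,a_6)$. By Theorem \ref{resixdd} we have $L_6\in\mathcal{D}_5\backslash\mathcal{D}_3$ (indeed $\det(L_6)=25=5^2$), and every block of the blowup is transitive, so Remark \ref{reninedet} applied with $R=L_6$ and $k=5$ gives $L_6(a_1,\ldots,a_6)\in\mathcal{D}_5\backslash\mathcal{D}_3$. Since $\mathcal{D}_5$ and $\mathcal{D}_3$ are closed under switching by Proposition \ref{Minors}, it follows that $T\in\mathcal{D}_5\backslash\mathcal{D}_3$.

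\textbf{The forward direction: locating an embedded $L_6$.} Now assume $T\in\mathcal{D}_5\backslash\mathcal{D}_3$. Because $T\notin\mathcal{D}_3$, Theorem \ref{Dthree}(iii) produces a $6$-subtournament $T[U]\notin\mathcal{D}_3$; as $T[U]\in\mathcal{D}_5$, Theorem \ref{resixdd} shows $T[U]$ is switching equivalent to $L_6$. Replacing $T$ by a suitable switch (which preserves membership in $\mathcal{D}_5\backslash\mathcal{D}_3$), I may assume $T[v_1,\ldots,v_6]=L_6$ is in the standard form of Proposition \ref{crforL6}, namely $T[v_1,\ldots,v_5]$ transitive with $v_1\rightarrow\cdots\rightarrow v_5$ and $\psi_T(v_6,X)=(1,-1,1,-1,1)$. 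For each remaining vertex $w$, the $7$-subtournament $T[v_1,\ldots,v_6,w]$ contains this $L_6$, hence lies in $\mathcal{D}_5\backslash\mathcal{D}_3$, so Proposition \ref{crforL6} forces $w$ to be a covertex or revertex of some $v_{i(w)}$, with $i(w)$ unique by Corollary \ref{cronlyone}. Switching $T$ with respect to the set of those extra vertices that are revertices leaves the embedded $L_6$ untouched and turns every extra vertex into a covertex of its own representative $v_{i(w)}$ relative to $\{v_1,\ldots,v_6\}$.

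\textbf{Cross-block arcs and the conclusion.} Set $B_i=\{v_i\}\cup\{w:i(w)=i\}$, so the $B_i$ partition $V(T)$. For extra vertices $w\in B_i$ and $w'\in B_j$ with $i\neq j$, the $8$-subtournament $T[v_1,\ldots,v_6,w,w']$ lies in $\mathcal{D}_5\backslash\mathcal{D}_3$ and satisfies the hypotheses of Proposition \ref{mustcoL6} (each of $w,w'$ is a covertex of its representative in the corresponding $7$-subtournament); hence $\theta_T(w,w')\theta_T(v_i,v_j)=1$, i.e. the arc between $w$ and $w'$ agrees with the arc $v_iv_j$ of $L_6$. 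Together with the fact that every extra vertex is a covertex of its representative relative to the $L_6$, this shows that all arcs between distinct blocks follow $L_6$. Therefore $T=L_6(T[B_1],\ldots,T[B_6])$ is a blowup of $L_6$. Finally, Remark \ref{reninedet} with $R=L_6$ and $k=5$ applies: since this blowup lies in $\mathcal{D}_5\backslash\mathcal{D}_3$, each block $T[B_i]$ must be transitive, so $T$ is switching equivalent to a transitive blowup of $L_6$.

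\textbf{Anticipated main obstacle.} The delicate point is that Proposition \ref{mustcoL6} only constrains the arcs \emph{between different} blocks, so it never tells us directly that a block with several vertices is transitive. My intention is to obtain within-block transitivity only at the very end: once the global blowup presentation $T=L_6(T[B_1],\ldots,T[B_6])$ is established, Remark \ref{reninedet} extracts the transitivity of every $T[B_i]$ for free from $T\in\mathcal{D}_5\backslash\mathcal{D}_3$. The accompanying bookkeeping—reducing to an embedded (rather than merely switching-equivalent) $L_6$, and checking that the switches turning revertices into covertices preserve both the embedded $L_6$ and the uniqueness of each $i(w)$—is where care is required but no new ideas are needed.
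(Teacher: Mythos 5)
Your proposal is correct and follows essentially the same route as the paper's proof: locate an embedded $L_6$ via Theorem \ref{Dthree} and Theorem \ref{resixdd}, attach each remaining vertex to a unique representative using Proposition \ref{crforL6} and Corollary \ref{cronlyone}, switch the revertices into covertices, fix the cross-block arcs with Proposition \ref{mustcoL6}, and finally extract within-block transitivity from Remark \ref{reninedet}. No substantive differences to report.
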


\begin{proof}
	If $ T $ is switching equivalent to a transitive blowup of $ L_6 $, since $L_6\in \mathcal{D}_5\backslash \mathcal{D}_3$, then $T\in \mathcal{D}_5\backslash \mathcal{D}_3$ by Proposition \ref{blowup} and Theorem \ref{blowupclass}. 
	
	Conversely, if $T\in \mathcal{D}_5\backslash \mathcal{D}_3$, we will show that $T$ is switching equivalent to a transitive blowup of $L_6$. 
	
	Since $T\in \mathcal{D}_5\backslash \mathcal{D}_3$, there exists a $6$-subtournament $T_s$ such that $\det(T_s)=25$ by (iii) of Theorem \ref{Dthree}. If $T_s$ is not $L_6$, then there is a switch of $T$, $T^{*}$, where $T^{*}[V(T_s)]$ is $L_6$ by Theorem \ref{resixdd}. Therefore, without loss of generality, we can assume that $T_s$ is $L_6$. 
	
	Let $|V(T)|=n$ and $V(T)=\{v_1,v_2,\ldots,v_n\}$ such that $T_s=T[v_1,v_2,v_3,v_4,v_5,v_6]$, $T_s[X_s]$ is transitive with $v_1\rightarrow v_2\rightarrow v_3\rightarrow v_4\rightarrow v_5$ in $T_s$ and $\psi_{T_s}(v_6,X_s)=(1,-1,1,-1,1)$, where $X_s=\{v_1,v_2,v_3,v_4,v_5\}$, $T_s^{(k)}=T[V(T_s)\cup\{v_k\}]$ for $7\leq k\leq n$. Then there exists $v_i\in V(T_s)$ such that $v_i$ and $v_k$ are covertices or revertices in $T_s^{(k)}$ for $k\in \{7,\ldots,n\}$ by Proposition \ref{crforL6} .
	
	Let $1\leq i\leq 6$, $Y_i=\{v_k\hspace{0.15cm}|\hspace{0.15cm} v_k\text{ and }v_i\text{ are covertices or revertices in }T_s^{(k)},7\leq k\leq n\}$, $X_i=Y_i\cup\{v_i\}$. Then $\bigcup\limits_{i=1}\limits^6 X_i=V(T)$, $X_i\cap X_j=\emptyset$ for any $1\leq i<j \leq 6$ by Corollary \ref{cronlyone}, and thus $X_1,X_2,\ldots,X_6$ form a partition of $V(T)$.
	
	Let $W_i=\{v_k\hspace{0.15cm}|\hspace{0.15cm} v_k\text{ and }v_i\text{ are revertices in }T_s^{(k)},7\leq k\leq n\}$ for $1\leq i\leq 6$, $W=\bigcup\limits_{i=1}\limits^6 W_i$. Then $T$ is switching equivalent to $T'$ with respect to $W$ such that $T'[V(T_s)]$ is also $L_6$ (if $W=\emptyset$, then $T'=T$), but for all $v_k\in Y_i$, $v_k$ and $v_i$ are covertices in $T'[V(T_s^{(k)})]$ for $1\leq i\leq 6, 7\leq k\leq n$. Moreover, for $1\leq i<j\leq 6$, $v_{k_1}\in Y_i$ and $v_{k_2}\in Y_j$, we have $\theta_{T'}(v_{k_1},v_{k_2})\cdot \theta_{T'}(v_i,v_j)=1$ by Proposition \ref{mustcoL6} and $T'\in \mathcal{D}_5\backslash \mathcal{D}_3$, or equivalently, $v_{k_1}\rightarrow v_{k_2}$ if and only if $v_i\rightarrow v_j$, which implies $X_i\rightarrow X_j$ if $v_i\rightarrow v_j$, and $X_i\leftarrow X_j$ if $v_i\leftarrow v_j$.
	
	Take $V(L_6)=\{u_1,u_2,u_3,u_4,u_5,u_6\}$ such that $u_1\rightarrow u_2\rightarrow u_3\rightarrow u_4\rightarrow u_5$ and $\psi_{L_6}(u_6,\{u_1,u_2,u_3,u_4,u_5\})=(1,-1,1,-1,1)$. Then $T'=L_6(T'[X_1],T'[X_2],T'[X_3],T'[X_4],\\T'[X_5],T'[X_6])$, where $u_i$ is replaced by $T'[X_i]$ in $L_6$. Since $\det(L_6)=25$, $L_6\in \mathcal{D}_5\backslash \mathcal{D}_3$ and $T'\in \mathcal{D}_5\backslash \mathcal{D}_3$, we have all $T'[X_i]$ are transitive by Remark \ref{reninedet}, say, $T'$ is a transitive blowup of $L_6$, and $T$ is switching equivalent to a transitive blowup of $L_6$.
	
	We complete the proof.
\end{proof}

By the definition of $L_n$, we have $L_2$ is a transitive tournament, $L_4$ is a diamond. Noting that the unique two diamonds are switching equivalent, and any tournament $T$ is a transitive blowup of itself, by Proposition \ref{Done}, Theorem \ref{Dthree} and Theorem \ref{D5D3character}, we can characterize the sets $\mathcal{D}_1$, $\mathcal{D}_3$ and $\mathcal{D}_5$ as follows.  

\begin{theorem}\label{D5character}
	Let $ T $ be an $n$-tournament $(n\geq 2)$. Then we have 
	\item{\rm(i)} $T\in \mathcal{D}_1$ if and only if $T$ is switching equivalent to a transitive blowup of $L_2$.
	\item{\rm(ii)} $T\in \mathcal{D}_3\backslash \mathcal{D}_1$ if and only if $T$ is switching equivalent to a transitive blowup of $L_4$.
	\item{\rm(iii)} $T\in \mathcal{D}_3$ if and only if $T$ is switching equivalent to a transitive blowup of $L_k$, where $k\in\{2,4\}$.
	\item{\rm(iv)} $T\in \mathcal{D}_5\backslash \mathcal{D}_3$ if and only if $T$ is switching equivalent to a transitive blowup of $L_6$.
	\item{\rm(v)} $T\in \mathcal{D}_5$ if and only if $T$ is switching equivalent to a transitive blowup of $L_k$, where $k\in\{2,4,6\}$.
\end{theorem}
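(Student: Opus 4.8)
The plan is to reduce each of the five statements to the characterizations already in hand (Proposition \ref{Done}, Theorem \ref{Dthree}, and Theorem \ref{D5D3character}) by re-expressing each description in the single uniform language of transitive blowups of $L_2$, $L_4$, and $L_6$. To do this cleanly I would first record three elementary observations. (a) A transitive tournament on $n\geq 2$ vertices is exactly a transitive blowup of $L_2$: indeed $L_2(a_1,a_2)$ is a join of two transitive tournaments and is therefore transitive, while conversely any transitive $n$-tournament ordered $v_1\to\cdots\to v_n$ equals $L_2(1,n-1)$. (b) Since $L_4$ is a diamond and the two isomorphism types of diamond are switching equivalent, every diamond is switching equivalent, under a suitable labeling of its four vertices, to $L_4$. (c) Switching commutes with blowing up along whole parts: if $R$ and $R'$ are switching equivalent with respect to $W\subseteq V(R)$, then for transitive tournaments $T_1,\dots,T_n$ the blowup $R(T_1,\dots,T_n)$ is switching equivalent to $R'(T_1,\dots,T_n)$ with respect to $\bigcup_{v_i\in W}V(T_i)$, because reversing the arcs across a union of whole parts leaves the arcs inside each part untouched and reverses the arcs between two parts precisely when the corresponding arc of $R$ is reversed.

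For (i) I would chain $T\in\mathcal{D}_1 \Leftrightarrow T$ is switching equivalent to a transitive tournament (Proposition \ref{Done}) $\Leftrightarrow T$ is switching equivalent to a transitive blowup of $L_2$ (observation (a)). For (ii), the remark following Theorem \ref{Dthree} gives $T\in\mathcal{D}_3\backslash\mathcal{D}_1 \Leftrightarrow T$ is switching equivalent to a transitive blowup of some diamond $D$; by (b) and (c), such a blowup is switching equivalent to a transitive blowup of $L_4$ (with the part sizes permuted according to the vertex correspondence), and conversely any transitive blowup of $L_4$ is a transitive blowup of a diamond, which yields the stated equivalence. Part (iv) is exactly a restatement of Theorem \ref{D5D3character} and needs no further argument.

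Finally, parts (iii) and (v) follow by assembling the disjoint decompositions $\mathcal{D}_3 = \mathcal{D}_1 \cup (\mathcal{D}_3\backslash\mathcal{D}_1)$ and $\mathcal{D}_5 = \mathcal{D}_3 \cup (\mathcal{D}_5\backslash\mathcal{D}_3)$: combining (i) and (ii) shows $T\in\mathcal{D}_3$ iff $T$ is switching equivalent to a transitive blowup of $L_k$ with $k\in\{2,4\}$, and combining (iii) with (iv) shows $T\in\mathcal{D}_5$ iff $T$ is switching equivalent to a transitive blowup of $L_k$ with $k\in\{2,4,6\}$.

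I expect the only genuine content — and hence the main obstacle — to be observation (c), together with the verification in (b) that an arbitrary diamond is switching equivalent to $L_4$: one must confirm that switching along a union of whole parts commutes with the blowup operation, so that switching equivalence of base tournaments lifts to switching equivalence of their blowups, while carefully tracking how the part sizes are relabeled under the vertex correspondence. Everything else is a direct citation of prior results plus the elementary identification of transitive tournaments with the blowups of $L_2$.
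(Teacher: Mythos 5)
Your proposal is correct and follows essentially the same route as the paper: the paper likewise derives all five parts by citing Proposition \ref{Done}, Theorem \ref{Dthree} and Theorem \ref{D5D3character}, together with the observations that $L_2$ is transitive, $L_4$ is a diamond, the two diamonds are switching equivalent, and every tournament is a transitive blowup of itself. Your observations (a)--(c) simply make explicit the routine verifications (transitive tournaments are exactly transitive blowups of $L_2$, and switching of the base lifts to switching of the blowup) that the paper leaves implicit.
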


Let $k\geq 3$ be an odd integer. If $\mathcal{D}_k\backslash\mathcal{D}_{k-2}\neq \emptyset$, by the result of Theorem \ref{D5character}, a natural question is that whether an $n$-tournament $T\in \mathcal{D}_k\backslash\mathcal{D}_{k-2}$ if and only if $T$ is switching equivalent to a transitive blowup of $L_{k+1}$ for $k\geq 7$. 

In fact, the answer to the above question is no. For example, there exists a $6$-tournament $T$ such that $\det(T)=49$ by the proof of Subcases 2.2 $ \sim $ 2.4 of Proposition \ref{sixdd}, and then $T\in \mathcal{D}_7\backslash \mathcal{D}_5$ by a simple proof, but $T$ can not be switching equivalent to a transitive blowup of $L_8$ since $|V(L_8)|=8>|V(T)|=6$.

Therefore, we can propose the further question as follows.

\begin{question}\label{queanyDk}
	Let $k\text{ }(\geq 7)$ be odd. If $\mathcal{D}_k\backslash\mathcal{D}_{k-2} \neq \emptyset$, then what conditions must $T$ and $L_{k+1}$ satisfy such that $T\in \mathcal{D}_k\backslash \mathcal{D}_{k-2}$ if and only if $T$ is switching equivalent to a transitive blowup of $L_{k+1}$?
\end{question}

In fact, we will show that $\mathcal{D}_k\backslash \mathcal{D}_{k-2}\neq\emptyset$ for any odd $k\text{ }(\geq 3)$ in next section.

\section{The properties of $L_n$}\label{ln}
\hspace{1.5em}In this section, we will show two important properties of $L_n$, one is that $\det(L_n)=(n-1)^2$ for any even integer $n\text{ }(\geq 2)$, and the other is that $L_n\in \mathcal{D}_{n-1}\backslash \mathcal{D}_{n-3}$ for any even integer $n\text{ }(\geq 4)$, which implies $\mathcal{D}_k\backslash\mathcal{D}_{k-2}\neq\emptyset$ for any odd $k\text{ }(\geq 3)$.

In \cite{UNI}, the authors proposed the following question. 

\begin{question}{\rm(\!\!\text{ Belkouche et al.}\cite[Question 10]{UNI})}\label{queanyk}
	Does there exist a tournament whose determinant is $k^2$ for every odd number $k$?
\end{question}

In the following, we will show $\det(L_n)=(n-1)^2$, which implies the answer to Question \ref{queanyk} is yes.

\begin{lemma}\label{ledetLn}
	Let $m\geq 3$ be a positive integer,
	\begin{align*}
	Q_m=\left|\begin{array}{rrrrrrr}
	1 & 0 & 1 & 1 & \cdots & 1 & 1 \\
	-1 & -1 & 0 & 1 & \cdots & 1 & 1 \\
	1 & -1 & -1 & 0 & \cdots & 1 & 1 \\
	-1 & -1 & -1 & -1 & \cdots & 1 & 1 \\
	\vdots & \vdots & \vdots & \vdots & \ddots & \vdots & \vdots \\
	(-1)^{m-2} & -1 & -1 & -1 & \cdots & -1& 0 \\
	(-1)^{m-1} & -1 & -1 & -1 & \cdots & -1& -1
	\end{array}\right|.
	\end{align*}
	Then we have
	
	\item{\rm(i)} $Q_m= \begin{cases}2-Q_{m-1}, & \text { if } m\geq 5 \text{ and } m \text{ is odd};\\ -Q_{m-1}, &\text { if } m\geq 4 \text{ and } m \text{ is even}.\end{cases}$
	
	\item{\rm(ii)} if $m$ is odd, then $Q_m=m$.
\end{lemma}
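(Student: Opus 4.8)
The plan is to prove the single uniform recurrence
$$Q_m = 2\,d_{m-1} - Q_{m-1},$$
where $d_{m-1}$ denotes the determinant of the skew-adjacency matrix of the transitive tournament on $m-1$ vertices, and then to read off (i) by splitting on the parity of $m$ and to deduce (ii) by a short induction. Write $A_m$ for the $m\times m$ matrix whose determinant is $Q_m$. The first step is to record two structural facts that I would verify directly from the displayed entries. First, the submatrix of $A_m$ on rows $1,\dots,m-1$ and columns $1,\dots,m-1$ is exactly the matrix defining $Q_{m-1}$. Second, the submatrix on rows $1,\dots,m-1$ and columns $2,\dots,m$ is, after relabelling those columns as $1,\dots,m-1$, precisely the skew-adjacency matrix of the transitive tournament $v_1\to\cdots\to v_{m-1}$: its entry is $+1$, $0$, or $-1$ according as the (relabelled) column index is greater than, equal to, or less than the row index.

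The key step is the choice of a single determinant-preserving row operation: I would replace the last row $R_m$ by $R_m-R_{m-1}$. Rows $m-1$ and $m$ of $A_m$ agree (both equal $-1$) in every column from $2$ to $m-1$; their first entries are $(-1)^{m-2}$ and $(-1)^{m-1}$, and their $m$-th entries are $0$ and $-1$. Hence the new last row collapses to $\bigl(2(-1)^{m-1},0,\dots,0,-1\bigr)$, with only two nonzero entries. Expanding along this row and combining the entries with the cofactor signs $(-1)^{m+1}$ (column $1$) and $(-1)^{2m}$ (column $m$) leaves exactly two surviving minors with coefficients $+2$ and $-1$: the minor deleting row $m$ and column $m$, which equals $Q_{m-1}$ by the first structural fact, and the minor deleting row $m$ and column $1$, which equals $d_{m-1}$ by the second. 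This yields $Q_m = 2d_{m-1}-Q_{m-1}$, valid for $m\ge 4$ so that $Q_{m-1}$ lies within the range of the lemma.

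To finish (i) I would evaluate $d_{m-1}$ by parity. When $m-1$ is even (that is, $m$ odd), Proposition \ref{Dettransi} gives $d_{m-1}=1$, so $Q_m = 2-Q_{m-1}$; when $m-1$ is odd (that is, $m$ even), the matrix has odd order and is skew-symmetric, hence singular, so $d_{m-1}=0$ and $Q_m=-Q_{m-1}$. These are exactly the two cases of (i), the odd case beginning at $m=5$ precisely because the recurrence then invokes $Q_{m-1}$ with $m-1\ge 4$. For (ii) I would induct on odd $m$: the base case $Q_3=3$ is a direct $3\times 3$ evaluation, and for odd $m\ge 5$ the odd case of (i) together with the even case applied to $Q_{m-1}=-Q_{m-2}$ gives $Q_m = 2-Q_{m-1}=2+Q_{m-2}$, whence $Q_m = 2+(m-2)=m$ by the induction hypothesis.

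The only genuinely delicate point, and the one I would double-check most carefully, is the bookkeeping in the cofactor expansion: confirming that after the row operation exactly columns $1$ and $m$ survive, that the two cofactor signs combine with the entries to give the coefficients $+2$ and $-1$, and — most importantly — that the two resulting minors are literally the matrix of $Q_{m-1}$ and the skew-adjacency matrix of a transitive tournament, so that Proposition \ref{Dettransi} and the odd-order vanishing apply with no sign ambiguity. Once these identifications are pinned down, the remainder is routine.
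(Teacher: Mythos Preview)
Your proposal is correct and follows essentially the same approach as the paper: both subtract row $m-1$ from row $m$, expand along the resulting two-entry last row, identify the two minors as $Q_{m-1}$ and the determinant of a transitive $(m-1)$-tournament, and then finish (ii) by the same two-step induction with base case $Q_3=3$. The only difference is cosmetic: you package the calculation as the single unified recurrence $Q_m=2d_{m-1}-Q_{m-1}$ before splitting on parity, whereas the paper carries out the odd and even cases separately from the start.
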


\begin{proof}	
	If $m$ is odd and $m\geq 5$, by subtracting the $(m-1)$-th row from the $m$-th row in $Q_m$, we have
	\begin{align}
	Q_m&=\left|\begin{array}{rrrrrrr}
	1 & 0 & 1 & 1 & \cdots & 1 & 1 \\
	-1 & -1 & 0 & 1 & \cdots & 1 & 1 \\
	1 & -1 & -1 & 0 & \cdots & 1 & 1 \\
	-1 & -1 & -1 & -1 & \cdots & 1 & 1 \\
	\vdots & \vdots & \vdots & \vdots & \ddots & \vdots & \vdots \\
	-1 & -1 & -1 & -1 & \cdots & -1 & 0 \\
	2 & 0 & 0 & 0 & \cdots & 0 & -1
	\end{array}\right| \notag\\
	&= 2\times 
	\left|\begin{array}{rrrrr}
	0 & 1 & 1 & \cdots & 1 \\
	-1 & 0 & 1 & \cdots & 1 \\
	-1 & -1 & 0 & \cdots & 1 \\
	\vdots & \vdots & \vdots & \ddots & \vdots \\
	-1 & -1 & -1 & \cdots & 0
	\end{array}\right|-Q_{m-1}.\label{ak}
	\end{align}
	
	We note that the first term on the right of (\ref{ak}) equals $2$ times the determinant of a transitive $(m-1)$-tournament, which implies it equals to $2$ if $m$ is odd, and $0$ if $m$ is even. Thus
	\begin{align*}
	Q_m=2-Q_{m-1}.
	\end{align*}
	
	If $m$ is even and $m\geq 4$,
	\begin{align*}
	Q_m&=\left|\begin{array}{rrrrrrr}
	1 & 0 & 1 & 1 & \cdots & 1 & 1 \\
	-1 & -1 & 0 & 1 & \cdots & 1 & 1 \\
	1 & -1 & -1 & 0 & \cdots & 1 & 1 \\
	-1 & -1 & -1 & -1 & \cdots & 1 & 1 \\
	\vdots & \vdots & \vdots & \vdots & \ddots & \vdots & \vdots \\
	1 & -1 & -1 & -1 & \cdots & -1 & 0 \\
	-2 & 0 & 0 & 0 & \cdots & 0 & -1
	\end{array}\right| \\
	&=(-2)\times 
	\left|\begin{array}{rrrrr}
	0 & 1 & 1 & \cdots & 1 \\
	-1 & 0 & 1 & \cdots & 1 \\
	-1 & -1 & 0 & \cdots & 1 \\
	\vdots & \vdots & \vdots & \ddots & \vdots \\
	-1 & -1 & -1 & \cdots & 0
	\end{array}\right|-Q_{m-1}\\
	&=-Q_{m-1}.
	\end{align*}
	
	 So (i) holds.
	 
	 If $m=3$, we have 
	 $
	 Q_3=\left|\begin{array}{rrr}
	 1 & 0 & 1 \\
	 -1 & -1 & 0 \\
	 1 & -1 & -1 \\
	 \end{array}\right|=3
	 $. 
	 
	 For odd $m\text{ }(\geq 5)$, by (i) and induction, we have
	\begin{align*}
	Q_m=2-Q_{m-1}=2+Q_{m-2}=\cdots=m-3+Q_3=m-3+3= m.
	\end{align*}
	
	So (ii) holds.
\end{proof}

By Lemma \ref{ledetLn} and the properties of determinant, we prove that $\det(L_n)=(n-1)^2$ for any positive even integer $n$.

\begin{theorem}\label{detLn}
	Let $n$ be even. Then we have
	\item{\rm(i)} $\det(L_n)=\det(L_{n-2})+4(n-2)$ for $n\geq 4$.
	\item{\rm(ii)} $\det(L_n)=(n-1)^2$.
\end{theorem}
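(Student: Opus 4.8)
The plan is to obtain (ii) as an immediate induction from (i), so that essentially all the work lies in the recursion (i). For (ii) the base case is $n=2$, where $L_2$ is transitive and $\det(L_2)=1=(2-1)^2$; assuming $\det(L_{n-2})=(n-3)^2$, part (i) gives
\[
\det(L_n)=(n-3)^2+4(n-2)=n^2-2n+1=(n-1)^2,
\]
which closes the induction. Hence I concentrate on proving (i).

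First I would write $S_{L_n}$ explicitly in the vertex order $u_1,\dots,u_{n-1},u_n$: the leading $(n-1)\times(n-1)$ block is the transitive skew matrix $A$ with $A_{ij}=1$ for $i<j$ and $A_{ij}=-1$ for $i>j$, the last column is $b$ with $b_i=(-1)^i$, and the last row is $-b^{\mathsf T}$. The key structural observation is that deleting $u_{n-2}$ and $u_{n-1}$ leaves the subtournament induced on $\{u_1,\dots,u_{n-3},u_n\}$; its transitive part $u_1\to\cdots\to u_{n-3}$ together with $\psi(u_n,\cdot)=(1,-1,\dots,(-1)^{n-4})$ is precisely $L_{n-2}$. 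Thus $\det(L_{n-2})$ is the principal minor of $S_{L_n}$ obtained by deleting rows and columns $n-2,n-1$, and the complementary $2\times2$ block on $\{u_{n-2},u_{n-1}\}$ is $\left(\begin{smallmatrix}0&1\\-1&0\end{smallmatrix}\right)$, of determinant $1$.

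Then I would expand $\det(S_{L_n})$ by a Laplace (generalized cofactor) expansion along the two rows indexed $n-2$ and $n-1$. Because both of these rows equal $-1$ in each of the first $n-3$ columns, every $2\times2$ minor drawn from those columns vanishes, so only column pairs meeting $\{n-2,n-1,n\}$ survive, leaving $O(n)$ terms. The term whose column set is exactly $\{n-2,n-1\}$ contributes $+\det\left(\begin{smallmatrix}0&1\\-1&0\end{smallmatrix}\right)\cdot\det(L_{n-2})=\det(L_{n-2})$, i.e. exactly the first summand of (i). Each remaining term is a small $2\times2$ minor of the two rows (with values in $\{\pm1,\pm2\}$) times a complementary $(n-2)\times(n-2)$ minor supported on the transitive rows $u_1,\dots,u_{n-3}$ together with the alternating row $u_n$ and, depending on the column set, the alternating column $b$. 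Performing the usual adjacent-row/column reduction on their transitive part brings each complementary minor into the shape of the determinants $Q_m$ of Lemma \ref{ledetLn} (or into a transitive minor contributing $0$ or $1$); since $n$ is even, the relevant indices $m\in\{n-3,n-1\}$ are odd, so Lemma \ref{ledetLn}(ii) evaluates them as $Q_{n-1}=n-1$ and $Q_{n-3}=n-3$. Collecting the surviving terms with their Laplace signs should then yield the constant correction $4(n-2)$.

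The hard part will be the bookkeeping in this last step: tracking the Laplace signs $(-1)^{\sigma(R)+\sigma(C)}$ for each surviving column set $C$, correctly matching every complementary minor to a $Q_m$ (keeping the alternating row and, where present, the alternating column in the orientation used to define $Q_m$, if necessary after a transposition that flips the sign of the transitive block), and confirming that the $\pm1,\pm2$ weights combine to give exactly $4(n-2)$ rather than merely an $O(n)$ quantity. I would first pin down all sign conventions on the smallest case $n=4$, where the five surviving terms are $2\times2$ and sum directly to $8=4\cdot2$, and then carry the identical pattern through for general even $n$, using that transitive minors are $0$ or $1$ and that $Q_m=m$ for odd $m$. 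Should the Laplace signs prove unwieldy, an alternative is to apply the row operations $R_i\to R_i-R_{i+2}$ for $i=1,\dots,n-3$ directly to $S_{L_n}$: these annihilate the last column except in two rows and reduce $\det(L_n)$ to just two cofactors, which are again of $Q_m$-type, so the same Lemma delivers the recursion.
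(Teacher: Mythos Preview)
Your outline is valid but follows a different and heavier route than the paper. You expand $S_{L_n}$ by Laplace along rows $n-2$ and $n-1$; this correctly isolates the column-pair $\{n-2,n-1\}$ contribution as $\det(L_{n-2})$, but it leaves $3(n-2)-1$ further nonzero terms (one for each remaining column pair meeting $\{n-2,n-1,n\}$), each a $\pm1$ or $\pm2$ weight times a non-principal $(n-2)\times(n-2)$ minor, all of which must then be reduced and summed. The paper instead places the alternating vertex first (passing to $L_n'\cong L_n$) and splits only the two entries $s_{1,n-1}=-1=1+(-2)$ and $s_{n-1,1}=1=(-1)+2$ by multilinearity in row~$1$ and column~$1$. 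This yields exactly four determinants: $\det(A_1)=\det(L_{n-2})$ because the modified matrix $A_1$ is the skew matrix of a join $L'_{n-2}\to T_2$ and Theorem~\ref{Djoin}(i) applies; $\det(A_2)=\det(A_3)=2Q_{n-3}$ after a single cofactor expansion along the sparse row or column; and $\det(A_4)=4$ after two cofactor expansions leaving a transitive determinant. Hence $\det(L_n)=\det(L_{n-2})+4Q_{n-3}+4=\det(L_{n-2})+4(n-2)$ with essentially no sign bookkeeping. One small slip in your sketch: the complementary minors in a two-row Laplace expansion of an $n\times n$ matrix are $(n-2)\times(n-2)$, so they cannot reduce to $Q_{n-1}$, which is $(n-1)\times(n-1)$; only $Q_m$ with $m\le n-2$ can arise there (your row-operation alternative does produce $(n-1)\times(n-1)$ cofactors, but those have a tridiagonal block from the operations and would need a separate identification with the $Q_m$ of Lemma~\ref{ledetLn}).
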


\begin{proof}
Take $V(L_n)=\{u_1,u_2,\ldots,u_n\}$ such that $L_n[u_1,\ldots,u_{n-1}]$ is transitive with $u_1\rightarrow u_2\rightarrow \cdots \rightarrow u_{n-1}$ in $L_n$ and $\psi_{L_n}(u_n,X)=((-1)^0,(-1)^1,\ldots,(-1)^{n-2})$, where $X=\{u_1,\ldots,u_{n-1}\}$. It is clear that $\det(L_2)=1$ and $\det(L_4)=9$, thus we only need to prove that (i) and (ii) hold for $n\geq 6$.

Let $C_n$ be an $n\times n$ matrix defined by
\begin{align*}
C_n&=\left(\begin{array}{rrrrrr}
0 & 1 & 1 & 1 & \cdots & 1 \\
-1 & 0 & 1 & 1 & \cdots & 1 \\
-1 & -1 & 0 & 1 & \cdots & 1 \\
-1 & -1 & -1 & 0 & \cdots & 1 \\
\vdots & \vdots & \vdots & \vdots & \ddots & 1 \\
-1 & -1 & -1 & -1 & \cdots & 0 \\
\end{array}\right),
\end{align*} 
$l_n$ be a $1\times n$ matrix defined by
\begin{align*}
l_n&=\left(\begin{array}{rrrrrr}
(-1)^0 & (-1)^1 & (-1)^2 & \cdots & (-1)^{n-2} & (-1)^{n-1}\\
\end{array}\right),
\end{align*} 
and $L_n'$ be a tournament such that $V(L_n')=V(L_n)$, $L_n'(X')$ is transitive with $u_2\rightarrow u_3\rightarrow \cdots \rightarrow u_n$ in $L_n'$, $\psi_{L_n'}(u_1,X')=((-1)^0,(-1)^1,\ldots,(-1)^{n-2})$, where $X'=\{u_2,\ldots,u_n\}$. Then $L_n\cong L_n'$, and
\begin{align}
\det(L_n)=\det(L_n')=\left|\begin{array}{cc}
0 & l_{n-1} \\
-l_{n-1}^t & C_{n-1}\\
\end{array}\right|. \label{ak2}
\end{align} 

Now we compute the value of $\det(L_n)$. By (\ref{ak2}), we have
\begin{align}
\det(L_n)&=\tiny{\left|\begin{array}{rrrrrrrrr}
	0 & 1 & -1 & 1 & -1 & \cdots & 1 & -1 & 1 \\
	-1 & 0 & 1 & 1 & 1 & \cdots & 1 & 1 & 1 \\
	1 & -1 & 0 & 1 & 1 & \cdots & 1 & 1 & 1 \\
	-1 & -1 & -1 & 0 & 1 & \cdots &1 & 1 & 1 \\
	1 & -1 & -1 & -1 & 0 & \cdots & 1 & 1 & 1 \\
	\vdots & \vdots & \vdots & \vdots & \vdots & \ddots & \vdots & \vdots & \vdots \\
	-1 & -1 & -1 & -1 & -1 & \cdots  & 0 & 1 & 1 \\
	1 & -1 & -1 & -1 & -1 & \cdots  & -1 & 0 & 1 \\
	-1 & -1 & -1 & -1 & -1 & \cdots & -1 & -1 & 0 \\
	\end{array}\right|} \notag\\
&=\tiny{\left|\begin{array}{rrrrrrrrr}
	0+0 & 1+0 & -1+0 & 1+0 & -1+0 & \cdots & 1+0 & 1-2 & 1+0 \\
	-1+0 & 0 & 1 & 1 & 1 & \cdots  & 1 & 1 & 1 \\
	1+0 & -1 & 0 & 1 & 1 & \cdots & 1 & 1 & 1 \\
	-1+0 & -1 & -1 & 0 & 1 & \cdots & 1 & 1 & 1 \\
	1+0 & -1 & -1 & -1 & 0 & \cdots & 1 & 1 & 1 \\
	\vdots & \vdots & \vdots & \vdots & \vdots & \ddots & \vdots & \vdots & \vdots \\
	-1+0 & -1 & -1 & -1 & -1 & \cdots & 0 & 1 & 1 \\
	-1+2 & -1 & -1 & -1 & -1 & \cdots & 1 & 0 & 1 \\
	-1+0 & -1 & -1 & -1 & -1 & \cdots & 1 & -1 & 0 \\
	\end{array}\right|} \notag\\
&=\tiny{\left|\begin{array}{rrrrrrrrr}
	0 & 1 & -1 & 1 & -1 & \cdots & 1 & 1 & 1 \\
	-1 & 0 & 1 & 1 & 1 & \cdots & 1 & 1 & 1 \\
	1 & -1 & 0 & 1 & 1 & \cdots & 1 & 1 & 1 \\
	-1 & -1 & -1 & 0 & 1 & \cdots & 1 & 1 & 1 \\
	1 & -1 & -1 & -1 & 0 & \cdots & 1 & 1 & 1 \\
	\vdots & \vdots & \vdots & \vdots & \vdots & \ddots & \vdots & \vdots & \vdots \\
	-1 & -1 & -1 & -1 & -1 & \cdots & 0 & 1 & 1 \\
	-1 & -1 & -1 & -1 & -1 & \cdots & -1 & 0 & 1 \\
	-1 & -1 & -1 & -1 & -1 & \cdots & -1 & -1 & 0 \\
	\end{array}\right| +\left|\begin{array}{rrrrrrrrr}
	0 & 1 & -1 & 1 & -1 & \cdots & 1 & 1 & 1 \\
	0 & 0 & 1 & 1 & 1 & \cdots & 1 & 1 & 1 \\
	0 & -1 & 0 & 1 & 1 & \cdots & 1 & 1 & 1 \\
	0 & -1 & -1 & 0 & 1 & \cdots & 1 & 1 & 1 \\
	0 & -1 & -1 & -1 & 0 & \cdots & 1 & 1 & 1 \\
	\vdots & \vdots & \vdots & \vdots & \vdots & \ddots & \vdots & \vdots & \vdots \\
	0 & -1 & -1 & -1 & -1 & \cdots & 0 & 1 & 1 \\
	2 & -1 & -1 & -1 & -1 & \cdots & -1 & 0 & 1 \\
	0 & -1 & -1 & -1 & -1 & \cdots & -1 & -1 & 0 \\
	\end{array}\right|} \notag\\
&+\tiny{\left|\begin{array}{rrrrrrrrr}
	0 & 0 & 0 & 0 & 0 & \cdots & 0 & -2 & 0 \\
	-1 & 0 & 1 & 1 & 1 & \cdots & 1 & 1 & 1 \\
	1 & -1 & 0 & 1 & 1 & \cdots & 1 & 1 & 1 \\
	-1 & -1 & -1 & 0 & 1 & \cdots & 1 & 1 & 1 \\
	1 & -1 & -1 & -1 & 0 & \cdots & 1 & 1 & 1 \\
	\vdots & \vdots & \vdots & \vdots & \vdots & \ddots & \vdots & \vdots & \vdots \\
	-1 & -1 & -1 & -1 & -1 & \cdots & 0 & 1 & 1 \\
	-1 & -1 & -1 & -1 & -1 & \cdots & -1 & 0 & 1 \\
	-1 & -1 & -1 & -1 & -1 & \cdots & -1 & -1 & 0 \\
	\end{array}\right|+\left|\begin{array}{rrrrrrrrr}
	0 & 0 & 0 & 0 & 0 & \cdots & 0 & -2 & 0 \\
	0 & 0 & 1 & 1 & 1 & \cdots & 1 & 1 & 1 \\
	0 & -1 & 0 & 1 & 1 & \cdots & 1 & 1 & 1 \\
	0 & -1 & -1 & 0 & 1 & \cdots & 1 & 1 & 1 \\
	0 & -1 & -1 & -1 & 0 & \cdots & 1 & 1 & 1 \\
	\vdots & \vdots & \vdots & \vdots & \vdots & \ddots & \vdots & \vdots & \vdots \\
	0 & -1 & -1 & -1 & -1 & \cdots & 0 & 1 & 1 \\
	2 & -1 & -1 & -1 & -1 & \cdots & -1 & 0 & 1 \\
	0 & -1 & -1 & -1 & -1 & \cdots & -1 & -1 & 0 \\ 
	\end{array}\right|}\label{ak3}\\ 
&=\det(A_1)+\det(A_2)+\det(A_3)+\det(A_4), \label{ak4}
\end{align}
where $\det(A_i)$ is the $i$-th determinant in (\ref{ak3}).

Since $n$ is even, we have 
\begin{align}
	\det(A_2)&=\det(A_2^{t})=(-1)^n\det(-A_2^{t})=\det(A_3) \notag\\
			&=(-2)\times(-1)^{1+n-1}\times\tiny{\left|\begin{array}{rrrrrrrrr}
				-1 & 0 & 1 & 1 & 1 & \cdots & 1 & 1 & 1 \\
				1 & -1 & 0 & 1 & 1 & \cdots & 1 & 1 & 1 \\
				-1 & -1 & -1 & 0 & 1 & \cdots & 1 & 1 & 1 \\
				1 & -1 & -1 & -1 & 0 & \cdots & 1 & 1 & 1 \\
				\vdots & \vdots & \vdots & \vdots & \vdots & \ddots & \vdots & \vdots & \vdots \\
				1 & -1 & -1 & -1 & -1 & \cdots & 0 & 1 & 1 \\
				-1 & -1 & -1 & -1 & -1 & \cdots & -1 & 0 & 1 \\
				-1 & -1 & -1 & -1 & -1 & \cdots & -1 & -1 & 1 \\
				-1 & -1 & -1 & -1 & -1 & \cdots & -1 & -1 & 0 \\
				\end{array}\right|_{(n-1)\times(n-1)}} \notag\\
			&=(-2)\times\tiny{\left|\begin{array}{rrrrrrr|rr}
				-1 & 0 & 1 & 1 & 1 & \cdots & 1 & 1 & 1 \\
				1 & -1 & 0 & 1 & 1 & \cdots & 1 & 1 & 1 \\
				-1 & -1 & -1 & 0 & 1 & \cdots & 1 & 1 & 1 \\
				1 & -1 & -1 & -1 & 0 & \cdots & 1 & 1 & 1 \\
				\vdots & \vdots & \vdots & \vdots & \vdots & \ddots & \vdots & \vdots & \vdots \\
				1 & -1 & -1 & -1 & -1 & \cdots & 0 & 1 & 1 \\
				-1 & -1 & -1 & -1 & -1 & \cdots & -1 & 0 & 1 \\
				\cline{1-9}
				0 & 0 & 0 & 0 & 0 & \cdots & 0 & -1 & 0 \\
				0 & 0 & 0 & 0 & 0 & \cdots & 0 & 0 & -1 \\
				\end{array}\right|_{(n-1)\times(n-1)}} \notag\\
			&=(-2)\times\tiny{\left|\begin{array}{rrrrrrrr}
				-1 & 0 & 1 & 1 & 1 & \cdots & 1\\
				1 & -1 & 0 & 1 & 1 & \cdots & 1\\
				-1 & -1 & -1 & 0 & 1 & \cdots & 1\\
				1 & -1 & -1 & -1 & 0 & \cdots & 1\\
				\vdots & \vdots & \vdots & \vdots & \vdots & \ddots & \vdots\\
				(-1)^{n-4} & -1 & -1 & -1 & -1 & \cdots & 0\\
				(-1)^{n-3} & -1 & -1 & -1 & -1 & \cdots & -1\\
				\end{array}\right|_{(n-3)\times(n-3)}} \notag\\
			&=2\times\tiny{\left|\begin{array}{rrrrrrrr}
				1 & 0 & 1 & 1 & 1 & \cdots & 1\\
				-1 & -1 & 0 & 1 & 1 & \cdots & 1\\
				1 & -1 & -1 & 0 & 1 & \cdots & 1\\
				-1 & -1 & -1 & -1 & 0 & \cdots & 1\\
				\vdots & \vdots & \vdots & \vdots & \vdots & \ddots & \vdots\\
				(-1)^{n-5} & -1 & -1 & -1 & -1 & \cdots & 0\\
				(-1)^{n-4} & -1 & -1 & -1 & -1 & \cdots & -1\\
				\end{array}\right|_{(n-3)\times(n-3)}} \notag\\
			&=2Q_{n-3} \label{ak5}.
\end{align}

By using Theorem \ref{Djoin} and $\det\left(\begin{array}{rr}
0 & 1 \\
-1 & 0 \\
\end{array}\right)=1$, we have
\begin{align}
	\det(A_1)&=\tiny{\left|\begin{array}{rrrrrrr|rr}
		0 & 1 & -1 & 1 & -1 & \cdots & 1 & 1 & 1 \\
		-1 & 0 & 1 & 1 & 1 & \cdots & 1 & 1 & 1 \\
		1 & -1 & 0 & 1 & 1 & \cdots & 1 & 1 & 1 \\
		-1 & -1 & -1 & 0 & 1 & \cdots & 1 & 1 & 1 \\
		1 & -1 & -1 & -1 & 0 & \cdots & 1 & 1 & 1 \\
		\vdots & \vdots & \vdots & \vdots & \vdots & \ddots & \vdots & \vdots & \vdots \\
		-1 & -1 & -1 & -1 & -1 & \cdots & 0 & 1 & 1 \\
		\cline{1-9}
		-1 & -1 & -1 & -1 & -1 & \cdots & -1 & 0 & 1 \\
		-1 & -1 & -1 & -1 & -1 & \cdots & -1 & -1 & 0 \\
		\end{array}\right|}\notag\\
	&=\tiny{\left|\begin{array}{rrrrrrr}
			0 & 1 & -1 & 1 & -1 & \cdots & 1 \\
			-1 & 0 & 1 & 1 & 1 & \cdots & 1 \\
			1 & -1 & 0 & 1 & 1 & \cdots & 1 \\
			-1 & -1 & -1 & 0 & 1 & \cdots & 1 \\
			1 & -1 & -1 & -1 & 0 & \cdots & 1 \\
			\vdots & \vdots & \vdots & \vdots & \vdots & \ddots & \vdots \\
			-1 & -1 & -1 & -1 & -1 & \cdots & 0 \\
			\end{array}\right|}\notag\\
	&=\det(L_{n-2}). \label{ak6}
\end{align}

Since $n-2$ is even, by using Proposition \ref{Dettransi}, we have
\begin{align}
	\det(A_4)&=\tiny{\left|\begin{array}{rrrrrrrrr}
		0 & 0 & 0 & 0 & 0 & \cdots & 0 & \textbf{-2} & 0 \\
		0 & 0 & 1 & 1 & 1 & \cdots & 1 & 1 & 1 \\
		0 & -1 & 0 & 1 & 1 & \cdots & 1 & 1 & 1 \\
		0 & -1 & -1 & 0 & 1 & \cdots & 1 & 1 & 1 \\
		0 & -1 & -1 & -1 & 0 & \cdots & 1 & 1 & 1 \\
		\vdots & \vdots & \vdots & \vdots & \vdots & \ddots & \vdots & \vdots & \vdots \\
		0 & -1 & -1 & -1 & -1 & \cdots & 0 & 1 & 1 \\
		\textbf{2} & -1 & -1 & -1 & -1 & \cdots & -1 & 0 & 1 \\
		0 & -1 & -1 & -1 & -1 & \cdots & -1 & -1 & 0 \\
		\end{array}\right|}\notag\\
		&=\tiny{4\left|\begin{array}{rrrrrrr}
		0 & 1 & 1 & 1 & \cdots & 1 & 1 \\
		-1 & 0 & 1 & 1 & \cdots & 1 & 1 \\
		-1 & -1 & 0 & 1 & \cdots & 1 & 1 \\
		-1 & -1 & -1 & 0 & \cdots & 1 & 1 \\
		\vdots & \vdots & \vdots & \vdots & \ddots & \vdots & \vdots \\
		-1 & -1 & -1 & -1 & \cdots & 0 & 1 \\
		-1 & -1 & -1 & -1 & \cdots & -1 & 0 \\
		\end{array}\right|=4}.\label{ak7}
\end{align}

Combining (\ref{ak4}), (\ref{ak5}), (\ref{ak6}), (\ref{ak7}) and Lemma \ref{ledetLn}, we have
\begin{align*}
\det(L_n)&=\det(L_{n-2})+4Q_{n-3}+4=\det(L_{n-2})+4(n-2),
\end{align*}
thus (i) holds.

Note that $\det(L_4)=9$, by using (i) and induction, we have
\begin{align*}
	\det(L_n)=\det(L_4)+4(n-2)+4(n-4)+\cdots+4\cdot 4=(n-1)^2.
\end{align*}
Therefore, (ii) holds.
\end{proof}

By using Theorem \ref{detLn}, we obtain the following theorem, which solves Question \ref{queanyk}.

\begin{theorem}\label{AnyOdd}
	Let $k$ be odd. Then there exists a tournament whose determinant is $k^2$.
\end{theorem}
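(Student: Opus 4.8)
The plan is to exhibit the family $L_n$ as the witnesses, so that the statement becomes an immediate corollary of Theorem \ref{detLn}. Given an odd integer $k$, I would first note that it suffices to treat $k\geq 1$, since $k^2=(-k)^2$ and $-k$ is then a positive odd integer. The natural choice is to set $n:=k+1$; because $k$ is odd, $n$ is even, which is precisely the hypothesis under which $L_n$ and its determinant have been computed.

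The key and essentially only step is to invoke part (ii) of Theorem \ref{detLn}. Applying it with the even integer $n=k+1$ yields
\[
\det(L_{k+1})=(n-1)^2=\bigl((k+1)-1\bigr)^2=k^2 .
\]
Thus $T=L_{k+1}$ is an $(k+1)$-tournament whose determinant equals $k^2$, which proves the theorem. The smallest case is already consistent with the general construction: for $k=1$ we obtain $T=L_2$, a transitive $2$-tournament with $\det(L_2)=1=1^2$, in agreement with the fact that $L_2$ is transitive of even order.

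I expect no genuine obstacle to remain at this stage, because the entire analytic difficulty has been absorbed into Theorem \ref{detLn}, where the recursion $\det(L_n)=\det(L_{n-2})+4(n-2)$ and the evaluation of the auxiliary determinant $Q_m$ are carried out. The present argument is therefore purely a matter of bookkeeping: checking that $k+1$ is even, that $L_{k+1}$ is well defined, and that the base values ($\det(L_2)=1$, $\det(L_4)=9$) fit the formula. As an immediate consequence, the construction answers Question \ref{queanyk} affirmatively, since for every odd $k$ one produces a tournament of determinant $k^2$.
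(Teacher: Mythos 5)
Your proposal is correct and matches the paper's argument exactly: the paper likewise derives this theorem as an immediate consequence of Theorem \ref{detLn}(ii) by taking $T=L_{k+1}$ with $n=k+1$ even, so that $\det(L_{k+1})=(n-1)^2=k^2$. No further comment is needed.
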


Let $n$ be a positive even integer, $T$ be an $n$-tournament with vertices $v_1,v_2,\ldots,v_n$, $X=\{v_1,v_2,\ldots,v_{n-1}\}$ such that $T[X]$ is transitive and $\psi_T(v_n,X)=(\alpha_1,\alpha_2,\ldots,\alpha_t)$. We will show that $\det(T)$ reaches its maximum if and only if $t=n-1$.

 \begin{proposition}\label{MaxLn}
Let $n$ be a positive even integer, $T$ be an $n$-tournament with vertices $v_1,v_2,\ldots,v_n$, $X=\{v_1,v_2,\ldots,v_{n-1}\}$ such that $T[X]$ is transitive and $\psi_T(v_n,X)=(\alpha_1,\alpha_2,\ldots,\alpha_t)$. Then $\det(T)\leq (n-1)^2$, equality holds if and only if $t=n-1$.
 \end{proposition}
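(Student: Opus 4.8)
The plan is to show that $\det(T)$ is left unchanged when we delete from the transitive part any two consecutive vertices lying in one and the same block of $\psi_T(v_n,X)$, and then to iterate this reduction until $T$ becomes (switching equivalent to) some $L_m$, whose determinant is already known from Theorem \ref{detLn}.

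First I would record the structural reduction. Write $X=\{v_1,\ldots,v_{n-1}\}$ with $v_1\to\cdots\to v_{n-1}$ and $\psi_T(v_n,X)=(\alpha_1,\ldots,\alpha_t)$, and suppose two vertices $v_j,v_{j+1}$ lie in the same part $X(i,\alpha_i)$, so that $|\alpha_i|\ge 2$. Then $v_j$ and $v_{j+1}$ are covertices in $T$: for every vertex $v_k\in X\setminus\{v_j,v_{j+1}\}$ they have the same relation by transitivity of $T[X]$, and they stand in the same relation to $v_n$ because they share the sign of $\alpha_i$. Consequently $T$ is the transitive blowup $R(1,\ldots,1,2,1,\ldots,1)$ of $R:=T[V(T)\setminus\{v_{j+1}\}]$, with the entry $2$ at the coordinate of $v_j$. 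Applying Proposition \ref{blowup} with $U=\{w:a_w\text{ odd}\}=V(R)\setminus\{v_j\}$ yields
$$\det(T)=\det\bigl(T[V(T)\setminus\{v_j,v_{j+1}\}]\bigr),$$
and the tournament on the right is again of the assumed shape, namely $v_n$ joined to the transitive tournament $T[X\setminus\{v_j,v_{j+1}\}]$, now on $n-2$ vertices.

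Next I would iterate. As long as some part $X(i,\alpha_i)$ has size at least $2$, there are two consecutive vertices of equal sign, so the reduction applies and strictly decreases the order by $2$. The procedure terminates at a tournament $T^{\ast}$ in which every part has size $1$, i.e. the signs along the transitive part strictly alternate; hence $T^{\ast}$ is $L_m$ after, if necessary, a switch with respect to $\{v_n\}$ that makes the first sign positive, an operation that changes neither the determinant (Proposition \ref{Minors}) nor the order. Here $m=|V(T^{\ast})|=n-2k$ for some $k\ge 0$, so $m\le n$, and $m$ is even because $n$ is even. Theorem \ref{detLn}(ii) then gives
$$\det(T)=\det(T^{\ast})=\det(L_m)=(m-1)^2\le (n-1)^2 .$$

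Finally, the equality analysis is immediate. The relation $(m-1)^2=(n-1)^2$ forces $m=n$, i.e. no reduction step was ever performed, i.e. every part $X(i,\alpha_i)$ already had size $1$; since $|\alpha_1|+\cdots+|\alpha_t|=n-1$, this is precisely $t=n-1$. Conversely, $t=n-1$ makes all parts singletons, so $T$ is $L_n$ up to a switch and $\det(T)=(n-1)^2$. The one point that needs care is the bookkeeping of the reduction: when a part of size $2$ disappears one must check that the two neighbouring parts, which necessarily carry equal signs, merge correctly, so that the reduced object is genuinely of the assumed form and the iteration closes. Once this is verified, both the termination at $L_m$ and the parity $m\equiv n\pmod 2$ follow at once, and the proof is complete.
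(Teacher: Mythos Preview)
Your proof is correct and follows essentially the same approach as the paper: both arguments recognize $T$ as a transitive blowup and invoke Proposition~\ref{blowup} to pass to a smaller tournament of the same shape, ultimately reducing to some $L_m$ and then applying Theorem~\ref{detLn}. The only difference is presentational: the paper views $T$ in one shot as $L_{t+1}(|\alpha_1|,\ldots,|\alpha_t|,1)$ and uses induction on $n$, whereas you peel off covertex pairs $\{v_j,v_{j+1}\}$ one at a time and iterate explicitly---but the underlying mechanism and the equality analysis are identical.
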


\begin{proof}
	It is trivial when $n=2$. Assume that the result is true when $n\leq m$ with $m\geq 2$, now we prove that the result is also true when $n=m+2$. 
	
	Without loss of generality, we assume that $v_1\rightarrow v_2\rightarrow \cdots \rightarrow v_{n-1}$ in $T$ since $T[X]$ is transitive. Let $V(L_{t+1})=\{u_1,u_2,\ldots,u_t,u_{t+1}\}$ such that $u_1\rightarrow u_2\rightarrow \cdots \rightarrow u_t$ in $L_{t+1}$ and $\psi_{L_{t+1}}(u_{t+1},X_u)=((-1)^0,(-1)^1,\ldots,(-1)^{t-1})$, where $X_u=\{u_1,u_2,\ldots,u_{t}\}$.
	
	\textbf{Case 1}: $t\leq n-2$.
	
	It is clear that there exists $j$ such that $|\alpha_j|\geq2$. Let $W=\emptyset$ if $\alpha_1>0$, $W=\{v_n\}$ if $\alpha_1<0$, $X(i,\alpha_i)(i=1,\ldots,t)$ be the corresponding vertex sets in $\psi_T(v_n,X)=(\alpha_1,\alpha_2,\ldots,\alpha_t)$. Then $T$ is switching equivalent to $L_{t+1}(|\alpha_1|,|\alpha_2|,\ldots,|\alpha_t|,1)$ with respect to $W$, where $T[X(i,\alpha_i)]$ replaces $u_i(i=1,2,\ldots,t)$ and $T[v_n]$ replaces $u_{t+1}$. Then by using Proposition \ref{blowup}, we have $\det(T)=\det(L_{t+1}[U])$, where $U=\{u_i\in V(L_{t+1})\backslash\{u_{t+1}\}: |\alpha_i| \text{ is odd }\}\cup\{u_{t+1}\}$. Thus $1\leq |U|\leq t+1 \leq n-1$. It is clear that $L_{t+1}[U\backslash\{u_{t+1}\}]$ is transitive. If $|U|$ is odd, then $\det(T)=\det(L_{t+1}[U])=0<(n-1)^2$. If $|U|$ is even, then $|U|\leq n-2=(m+2)-2=m$ since $n$ is even. Therefore, by the assumption that the result is true when $n\leq m$, we have $\det(T)=\det(L_{t+1}[U])\leq (m-1)^2<(m+1)^2=(n-1)^2$ in this case. 

	\textbf{Case 2}: $t=n-1$.
	
	It is clear that $|\alpha_1|=\cdots=|\alpha_{n-1}|=1$. Let $W=\emptyset$ if $\alpha_1=1$, and $W=\{v_n\}$ if $\alpha_1=-1$. Then $T$ is switching equivalent to $L_n$ with respect to $W$, thus $\det(T)=\det(L_n)=(n-1)^2$ by Theorem \ref{detLn}. Therefore, $\det(T)=(n-1)^2$ if $t=n-1$.
	
	Combining the above two cases, the result is true when $n=m+2$. By induction, the result is true for any positive even integer $n$.
\end{proof}

\begin{proposition}\label{SubLn}
	Let $n$ be a positive even integer. Then for every subtournament $H$ of $L_n$, $\det(H)\leq\det(L_n)=(n-1)^2$, and equality holds if and only if $H$ is $L_n$.
\end{proposition}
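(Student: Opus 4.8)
The plan is to reduce the statement to Proposition~\ref{MaxLn} by a short case analysis on the induced subtournament. Write $H=L_n[S]$ for a nonempty $S\subseteq V(L_n)$, and recall that $u_n$ is the distinguished vertex of $L_n$ with $L_n[X_u]$ transitive, where $X_u=\{u_1,\ldots,u_{n-1}\}$. If $S=V(L_n)$ then $H=L_n$ and $\det(H)=(n-1)^2$ by Theorem~\ref{detLn}, so it suffices to prove the strict inequality $\det(H)<(n-1)^2$ whenever $S\subsetneq V(L_n)$; this will simultaneously give the bound and the equality characterization. If $|S|$ is odd, then $H$ has odd order and $\det(H)=0<(n-1)^2$, so from now on I would assume $|S|$ is even. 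Since $n$ is even and $S$ is a proper subset, the parity constraint forces $|S|\le n-2$, which is the key numerical inequality that will make the bound strict.

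With $|S|$ even and $\le n-2$, I would split according to whether $u_n\in S$. If $u_n\notin S$, then $H$ is an induced subtournament of the transitive tournament $L_n[X_u]$, hence itself transitive; being of even order it has $\det(H)=1$ by Proposition~\ref{Dettransi}, and since this subcase forces $n\ge 4$ (as $|S|\ge 2$ and $S\subseteq X_u$) we get $1<9\le(n-1)^2$. If instead $u_n\in S$, set $S'=S\setminus\{u_n\}\subseteq X_u$; then $H[S']$ is transitive, so $H$ is an even-order tournament in which deleting the single vertex $u_n$ leaves a transitive tournament. This is precisely the hypothesis of Proposition~\ref{MaxLn} (applied with ground size $|S|$ in place of $n$, the role of $v_n$ played by $u_n$), which yields $\det(H)\le(|S|-1)^2$. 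Combining with $|S|\le n-2$ gives $\det(H)\le(n-3)^2<(n-1)^2$.

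Putting the cases together establishes $\det(H)\le(n-1)^2$ in all cases, with equality only when $S=V(L_n)$, i.e. when $H$ is $L_n$; the case $n=2$ is trivial since every proper subtournament then has at most one vertex. The only step requiring care is checking that a subtournament containing $u_n$ genuinely satisfies the hypotheses of Proposition~\ref{MaxLn}, but this is immediate: removing $u_n$ from $H$ leaves an induced subtournament of $L_n[X_u]$, and induced subtournaments of transitive tournaments are transitive, so the ordering and the sequence $\psi_H(u_n,S')$ are well defined. I expect no real obstacle beyond this bookkeeping, since the parity bound $|S|\le n-2$ in the even case already forces every inequality to be strict.
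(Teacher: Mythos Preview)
Your proof is correct and follows essentially the same approach as the paper: both split on whether $u_n\in V(H)$, use transitivity of $L_n[X_u]$ in the first case, and apply Proposition~\ref{MaxLn} in the second. Your parity observation that $|S|$ even and $S\subsetneq V(L_n)$ forces $|S|\le n-2$ makes the strict inequality slightly more explicit than the paper's version, but the argument is the same.
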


\begin{proof}
	If $|V(H)|=1$, then $\det(H)=0<(n-1)^2$. Now we consider $|V(H)|=m>1$.
	
	Let $V(L_n)=\{u_1,\ldots,u_{n-1},u_n\}$ such that $u_1\rightarrow \cdots \rightarrow u_{n-1}$ and $\psi_{L_n}(u_n,X_u)=((-1)^0,\ldots,(-1)^{n-2})$, where $X_u=\{u_1,\ldots,u_{n-1}\}$.
	
	\textbf{Case 1}: $u_n\notin V(H)$.
	
	It is clear that $H$ is transitive, thus $\det(H)\leq 1\leq (n-1)^2$. 
	
	\textbf{Case 2}: $u_n\in V(H)$.
	
	It is clear that $H(V(H)\backslash\{u_n\})$ is transitive and there exist nonzero integers $\alpha_1,\ldots,\alpha_t$ such that $\psi_H(u_n,V(H)\backslash\{u_n\})=(\alpha_1,\ldots,\alpha_t)$. Therefore, by Proposition \ref{MaxLn}, $\det(H)\leq (m-1)^2\leq (n-1)^2$, equality holds if and only if $m=n$, or equivalently, $|V(H)|=|V(L_n)|$, which implies $H$ is $L_n$.
\end{proof}

Now we give the following theorem, it is a stronger version of Theorem \ref{AnyOdd}.

\begin{theorem}\label{AnyOddsub}
	Let $k$ be odd. Then there exists a tournament $T$ such that $M_T=\{0,1^2,\ldots,k^2\}$, where $M_T$ is the set consisting of all the determinants of subtournaments of $T$.
\end{theorem}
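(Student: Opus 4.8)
The plan is to take $T=L_{k+1}$. Since $k$ is odd, $n:=k+1$ is even, so Theorem \ref{detLn} gives $\det(L_{k+1})=((k+1)-1)^2=k^2$, placing the largest required value in $M_T$. Writing $V(L_{k+1})=\{u_1,\ldots,u_{k+1}\}$ with $u_1\to\cdots\to u_k$ transitive and $\psi_{L_{k+1}}(u_{k+1},\{u_1,\ldots,u_k\})=((-1)^0,(-1)^1,\ldots,(-1)^{k-1})$, I would show that the determinants of the subtournaments of $L_{k+1}$ are exactly $\{0,1^2,3^2,\ldots,k^2\}$, that is, $0$ together with the squares of the odd integers not exceeding $k$ (which is the content of the notation $\{0,1^2,\ldots,k^2\}$, since a tournament determinant is either $0$ or the square of an odd integer).

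For the inclusion $M_T\subseteq\{0,1^2,3^2,\ldots,k^2\}$, I would combine two facts. First, by Proposition \ref{SubLn} every subtournament $H$ of $L_{k+1}$ satisfies $\det(H)\le((k+1)-1)^2=k^2$. Second, by the Fisher--Ryan result recalled in the introduction, $\det(H)=0$ when $|V(H)|$ is odd, while $\det(H)$ is the square of an odd integer when $|V(H)|$ is even. Hence a nonzero value of $\det(H)$ has the form $j^2$ with $j$ odd, and the bound $j^2\le k^2$ forces $j\le k$; thus every element of $M_T$ lies in $\{0,1^2,3^2,\ldots,k^2\}$.

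For the reverse inclusion I would exhibit a subtournament realizing each target value. The value $0$ is attained by any single vertex. For a fixed odd $j$ with $1\le j\le k$, consider $H_j:=L_{k+1}[u_1,\ldots,u_j,u_{k+1}]$. The vertices $u_1,\ldots,u_j$ still induce the transitive chain $u_1\to\cdots\to u_j$, and restricting the alternating pattern of $u_{k+1}$ to its first $j$ singleton blocks gives $\psi_{H_j}(u_{k+1},\{u_1,\ldots,u_j\})=((-1)^0,(-1)^1,\ldots,(-1)^{j-1})$. Matching this against the defining relations of $L_{j+1}$ (sending $u_{k+1}$ to the last vertex), $H_j$ is isomorphic to $L_{j+1}$; since $j$ is odd, $j+1$ is even, so Theorem \ref{detLn} yields $\det(H_j)=((j+1)-1)^2=j^2$. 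Letting $j$ range over the odd integers in $[1,k]$ produces every required square, and together with the single vertex we get $\{0,1^2,3^2,\ldots,k^2\}\subseteq M_T$. Combining the two inclusions gives $M_T=\{0,1^2,3^2,\ldots,k^2\}$.

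The bookkeeping is light here; the one step needing care is the identification $H_j\cong L_{j+1}$, where I must check that deleting the tail vertices $u_{j+1},\ldots,u_k$ from $L_{k+1}$ truncates the sign vector $((-1)^0,\ldots,(-1)^{k-1})$ to exactly $((-1)^0,\ldots,(-1)^{j-1})$ without merging or reordering blocks. This is immediate because all blocks have size one and are consecutively indexed, so no new adjacencies of equal sign become contiguous after the deletion. I expect this verification, rather than any computation, to be the only genuine point of the argument.
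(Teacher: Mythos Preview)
Your proposal is correct and follows essentially the same route as the paper: take $T=L_{k+1}$, use Proposition \ref{SubLn} (together with the Fisher--Ryan parity fact) for the upper inclusion, and realize each value $j^2$ via the subtournament $L_{k+1}[u_1,\ldots,u_j,u_{k+1}]\cong L_{j+1}$. The paper states the containment $L_2,L_4,\ldots,L_{k+1}\subseteq L_{k+1}$ without the explicit identification you give for $H_j$, so your write-up is in fact slightly more detailed at that step.
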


\begin{proof}
	Note that $L_{k+1}$ contains $L_2,L_4,\ldots,L_{k-1},L_{k+1}$ as its subtournaments. Therefore, for every $m^2\in\{1^2,3^2,\ldots,(k-2)^2,k^2\}$, there exists a subtournament of $L_{k+1}$ whose determinant is $m^2$. Moreover, the determinant of a tournament with order $1$ is $0$, thus $0\in M_{L_{k+1}}$. Then by Proposition \ref{SubLn}, we have $M_{L_{k+1}}=\{0,1^2,3^2,\ldots,k^2\}$. 
	
	Let $T$ be $L_{k+1}$. Then we complete the proof.
\end{proof}

By the proof of Theorem \ref{AnyOddsub}, we have $L_{k+1}\in \mathcal{D}_k\backslash\mathcal{D}_{k-2}$. Therefore, $\mathcal{D}_k\backslash\mathcal{D}_{k-2}$ is not an empty set.

\begin{theorem}\label{Notempty}
	For any odd integer $k\geq 3$, $\mathcal{D}_k\backslash\mathcal{D}_{k-2}\neq\emptyset$.
\end{theorem}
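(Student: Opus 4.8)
The plan is to exhibit a single explicit tournament lying in $\mathcal{D}_k \backslash \mathcal{D}_{k-2}$; the natural candidate is $L_{k+1}$, which has an even number $k+1$ of vertices since $k$ is odd, and whose determinant has already been pinned down in Section \ref{ln}. Concretely, I would verify the two containments $L_{k+1} \in \mathcal{D}_k$ and $L_{k+1} \notin \mathcal{D}_{k-2}$ separately, each of which reduces immediately to the results established earlier.

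For the membership $L_{k+1} \in \mathcal{D}_k$, I would invoke Proposition \ref{SubLn}: every subtournament $H$ of $L_{k+1}$ satisfies $\det(H) \leq \det(L_{k+1}) = ((k+1)-1)^2 = k^2$. By the defining property of $\mathcal{D}_k$ (all subtournaments have determinant at most $k^2$), this places $L_{k+1}$ in $\mathcal{D}_k$.

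For the non-membership $L_{k+1} \notin \mathcal{D}_{k-2}$, I would use Theorem \ref{detLn}(ii), which gives $\det(L_{k+1}) = k^2$. Since $k \geq 3$ forces $k-2$ to be a positive odd integer, so that $\mathcal{D}_{k-2}$ is defined, and since $k^2 > (k-2)^2$, the subtournament $L_{k+1}$ of itself already exceeds the threshold $(k-2)^2$. Hence $L_{k+1} \notin \mathcal{D}_{k-2}$, and combining the two steps yields $L_{k+1} \in \mathcal{D}_k \backslash \mathcal{D}_{k-2}$, so this set is nonempty.

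There is essentially no obstacle remaining at this stage: all the substantive work has been discharged by the determinant computation $\det(L_n) = (n-1)^2$ (Theorem \ref{detLn}) and the maximality of $L_n$ among its own subtournaments (Proposition \ref{SubLn}). The only points requiring a moment's care are the parity and range bookkeeping—checking that $k+1$ is even so that $\det(L_{k+1})$ is governed by Theorem \ref{detLn}, and that the hypothesis $k \geq 3$ is exactly what is needed for $\mathcal{D}_{k-2}$ to make sense as a set indexed by a positive odd integer.
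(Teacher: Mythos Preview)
Your proposal is correct and matches the paper's approach: the paper also exhibits $L_{k+1}$ as the witness, deriving $L_{k+1}\in\mathcal{D}_k\backslash\mathcal{D}_{k-2}$ from Proposition~\ref{SubLn} (every subtournament of $L_{k+1}$ has determinant at most $k^2$) together with $\det(L_{k+1})=k^2>(k-2)^2$ from Theorem~\ref{detLn}.
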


Now Question \ref{queanyDk} can be expressed as follows.

\begin{question}\label{queanyDktwo}
	Let $k\text{ }(\geq 7)$ be odd. What conditions must $T$ and $L_{k+1}$ satisfy such that $T\in \mathcal{D}_k\backslash \mathcal{D}_{k-2}$ if and only if $T$ is switching equivalent to a transitive blowup of $L_{k+1}$?
\end{question}

\begin{remark}\label{reNotempty}
	Since $L_{k+1}\in \mathcal{D}_k\backslash\mathcal{D}_{k-2}$, where $k\geq 3$ and $k$ is odd, then for any sequence of positive integers $a_1,\ldots,a_{k+1}$, $L_{k+1}(a_1,\ldots,a_{k+1})\in \mathcal{D}_k\backslash\mathcal{D}_{k-2}$ by Proposition \ref{blowup} and Remark \ref{reblowupclass}. Therefore, there exists an $n$-tournament $T$ such that $T\in \mathcal{D}_k\backslash\mathcal{D}_{k-2}$ for any integer $n\geq k+1$.
\end{remark}

\vspace{0.5em}

\noindent
{\bf Funding}\,

This work is supported by the National Natural Science Foundation of China (Grant Nos.12371347, 12271337).

\noindent
{\bf Acknowledgments}\,

This work is supported by the National Natural Science Foundation of China (Grant Nos.12371347, 12271337).

\noindent
{\bf Conflict of interest statement}\,

The authors declare no conflict of interest.

\end{document}